%%%%%%%% ICML 2024 EXAMPLE LATEX SUBMISSION FILE %%%%%%%%%%%%%%%%%

\documentclass{article}

% Recommended, but optional, packages for figures and better typesetting:
\usepackage{microtype}
\usepackage{graphicx}
\usepackage{subfigure}
\usepackage{booktabs} % for professional tables

% hyperref makes hyperlinks in the resulting PDF.
% If your build breaks (sometimes temporarily if a hyperlink spans a page)
% please comment out the following usepackage line and replace
% \usepackage{icml2024} with \usepackage[nohyperref]{icml2024} above.
\usepackage{hyperref}

% Attempt to make hyperref and algorithmic work together better:

% Use the following line for the initial blind version submitted for review:
%\usepackage{icml2024}

% If accepted, instead use the following line for the camera-ready submission:
 \usepackage[accepted]{icml2024}

% For theorems and such
\usepackage{amsmath}
\usepackage{amssymb}
\usepackage{mathtools}
\usepackage{amsthm}

\def\b{{\mathbf{b}}}

\def\v{{\mathbf{v}}}

\def\w{{\mathbf{w}}}

\def\y{{\mathbf{y}}}

\def\b{{\mathbf{b}}}
\def\X{{\mathbf{X}}}
\def\Y{{\mathbf{Y}}}
\def\C{{\mathbf{C}}}
\def\A{{\mathbf{A}}}

\def\M{{\mathbf{M}}}

\def\C{{\mathbf{C}}}
\def\V{{\mathbf{V}}}

\def\Z{{\mathbf{Z}}}

\def\P{{\mathbf{P}}}

\DeclareMathOperator*{\argmax}{arg\,max}

\newcommand{\mA}{\mathcal{A}}
\newcommand{\mC}{\mathcal{C}}

\newcommand{\mK}{\mathcal{K}}

\newcommand{\mbS}{\mathbb{S}}

\newcommand{\mL}{\mathcal{L}}

\newcommand{\mD}{\mathcal{D}}

\newcommand{\trace}{\textnormal{\textrm{Tr}}}
\newcommand{\rank}{\textnormal{\textrm{rank}}}
\newcommand{\mat}{\textnormal{\textrm{mat}}}
\newcommand{\vect}{\textnormal{\textrm{vec}}}
\newcommand{\diam}{\textnormal{\textrm{diam}}}

\newcommand{\diag}{\textnormal{\textrm{diag}}}

\newcommand{\reals}{\mathbb{R}}
\newcommand{\sign}{\textrm{sign}}

\newcommand{\range}{\textnormal{\textrm{range}}}
\newcommand{\nullspace}{\textnormal{\textrm{null}}}

\newcommand{\blambda}{\boldsymbol{\lambda}}
\newcommand{\bmu}{\boldsymbol{\mu}}

 %Extended real line

%\DeclarePairedDelimiterX\setc[2]{\{}{\}}{\,#1 \;\delimsize\vert\; #2\,}

%\graphicspath{ {./newGraphs/} }
%\usepackage{subcaption}
%\usepackage{booktabs,multirow}
\usepackage{multirow, makecell}
\usepackage{comment} 

% if you use cleveref..
\usepackage[capitalize,noabbrev]{cleveref}

%%%%%%%%%%%%%%%%%%%%%%%%%%%%%%%%
% THEOREMS
%%%%%%%%%%%%%%%%%%%%%%%%%%%%%%%%
\theoremstyle{plain}
\newtheorem{theorem}{Theorem}[section]

\newtheorem{lemma}[theorem]{Lemma}

\theoremstyle{definition}

\newtheorem{assumption}[theorem]{Assumption}
\theoremstyle{remark}
\newtheorem{remark}[theorem]{Remark}

% Todonotes is useful during development; simply uncomment the next line
%    and comment out the line below the next line to turn off comments
%\usepackage[disable,textsize=tiny]{todonotes}
\usepackage[textsize=tiny]{todonotes}

% The \icmltitle you define below is probably too long as a header.
% Therefore, a short form for the running title is supplied here:
\icmltitlerunning{Low-Rank Extragradient Methods for Scalable Semidefinite Optimization}

\begin{document}

\twocolumn[
\icmltitle{Low-Rank Extragradient Methods for Scalable Semidefinite Optimization}

% It is OKAY to include author information, even for blind
% submissions: the style file will automatically remove it for you
% unless you've provided the [accepted] option to the icml2024
% package.

% List of affiliations: The first argument should be a (short)
% identifier you will use later to specify author affiliations
% Academic affiliations should list Department, University, City, Region, Country
% Industry affiliations should list Company, City, Region, Country

% You can specify symbols, otherwise they are numbered in order.
% Ideally, you should not use this facility. Affiliations will be numbered
% in order of appearance and this is the preferred way.
\icmlsetsymbol{equal}{*}

\begin{icmlauthorlist}
\icmlauthor{Dan Garber}{yyy}
\icmlauthor{Atara Kaplan}{yyy}
%\icmlauthor{}{sch}
%\icmlauthor{}{sch}
\end{icmlauthorlist}

\icmlaffiliation{yyy}{Faculty of Data and Decision Sciences, Technion - Israel Institute of Technology, Haifa, Israel}

\icmlcorrespondingauthor{Atara Kaplan}{ataragold@campus.technion.ac.il}
\icmlcorrespondingauthor{Dan Garber}{dangar@technion.ac.il}

% You may provide any keywords that you
% find helpful for describing your paper; these are used to populate
% the "keywords" metadata in the PDF but will not be shown in the document
\icmlkeywords{Machine Learning, ICML}

\vskip 0.3in
]

% this must go after the closing bracket ] following \twocolumn[ ...

% This command actually creates the footnote in the first column
% listing the affiliations and the copyright notice.
% The command takes one argument, which is text to display at the start of the footnote.
% The \icmlEqualContribution command is standard text for equal contribution.
% Remove it (just {}) if you do not need this facility.

%\printAffiliationsAndNotice{}  % leave blank if no need to mention equal contribution
\printAffiliationsAndNotice{This version corrects an error in the previous version, as well as in the short version published in \textit{Operations Research Letters} \cite{garber2025low}: while in those versions we reported $\mathcal{O}(1/T)$ rates for the \textbf{best iterate}, in this corrected version these rates hold only w.r.t. the \textbf{average iterate}. The $\mathcal{O}(1/\sqrt{T})$ rates for the last iterate remain unaffected. \\} % otherwise use the standard text.

\begin{abstract}
We consider several classes of highly important semidefinite optimization problems that involve both a convex objective function (smooth or nonsmooth) and additional linear or nonlinear smooth and convex constraints, which are ubiquitous in statistics, machine learning, combinatorial optimization, and other domains. We focus on high-dimensional and plausible settings in which the problem admits a low-rank solution which also satisfies a low-rank complementarity condition. We provide several theoretical results proving  that, under these circumstances, the well-known Extragradient method, when initialized in the proximity of an optimal primal-dual solution, converges to a solution of the constrained optimization problem with its standard convergence rates guarantees, using only low-rank singular value decompositions (SVD) to project onto the positive semidefinite cone, as opposed to computationally-prohibitive full-rank SVDs required in worst-case. Our approach is supported by numerical experiments conducted with a dataset of Max-Cut instances.
\end{abstract}

\section{Introduction}
\label{sec:intro}
The basic semidefinite optimization problem (SDP) under consideration in this work is the following:
%We are interested in solving nonlinear semidefinite programming (SDP) problems which can be written as
\begin{align} \label{mainProblemSDP}
\min f(\X) \quad  \textrm{s.t.} \quad \X\succeq 0,~ \mA(\X)=\b, 
\end{align}
%\begin{align} \label{mainProblemSDP}
%\min\ & f(\X)
%\\ \textrm{s.t.}\ & \mA(\X)=\b \nonumber
%\\  & \X\succeq0, \nonumber
%\end{align}
where $f:\mathbb{S}^n\rightarrow\reals$ is convex and $\beta$-smooth, $\b\in\reals^m$, $\mA:\mathbb{S}^n\rightarrow\reals^m$ is a linear map defined as $\mA(\X)=(\langle\X,\A_1\rangle,\dots,\langle\X,\A_m\rangle)^{\top}$ for $\A_1,\dots,\A_m\in\mathbb{S}^n$, and $\mathbb{S}^n$ is the space of $n\times n$ real symmetric matrices.

We shall also consider extensions of \eqref{mainProblemSDP} including:
%In addition, we are interested in extending our results also to the following cases:
\begin{enumerate}
\item $\min_{\X}\lbrace f(\X)+g(\X)~|~\mA(\X)=\b,\ \X\succeq0\rbrace$, where $g:\mathbb{S}^n\rightarrow\reals$ is convex but nonsmooth and can be written as a maximum over smooth functions. %over a compact and convex subset $\mK\subset\mathbb{Y}$ of some finite linear space over the reals $\mathbb{Y}$.
\item $\min_{\X}\lbrace f(\X)~|~\mA(\X)=\b,\ g(\X)\le0,\ \X\succeq0\rbrace$, where $g(\X)=(g_1(\X),\ldots,g_d(\X))^{\top}$ such that for all $i\in\lbrace 1,\ldots,d\rbrace$ $g_i:\mathbb{S}^n\rightarrow\reals$ is convex and $\beta_g$-smooth.
\end{enumerate}

Many well-studied and important SDPs are known to have low-rank solutions. Low-rank solutions are prevalent in applications such as Max-Cut \cite{maxCut1}, matrix-completion \cite{matrixCompletion2, matrixCompletion3, matrixCompletion1}, phase-retrieval \cite{phaseRetrevial1, phaseRetrevial2}, $Z_2$-synchronization \cite{Z2sync1}.
In particular, it is known that for linear SDPs ($f(\X) = \trace(\C\X)$, $\C\in\mbS^n$) the rank of the optimal solution $r$ satisfies  $0.5r(r+1)\le m$  \cite{Barvinok1995ProblemsOD}.

Problem \eqref{mainProblemSDP} can  be written as a convex-concave saddle-point problem in the following standard way:
\begin{align} \label{problem:generalSDPprimal}
 \min_{\X\succeq0} \max_{\y\in\reals^m}f(\X) + {\y}^{\top}(\b-\mathcal{A}(\X)).
%\min_{\substack{\X\succeq0\\ \mathcal{A}(\X)=\b}}f(\X) = \min_{\X\succeq0} \max_{\y\in\reals^m}f(\X) + {\y}^{\top}(\b-\mathcal{A}(\X)).
\end{align}
This formulation enables solving Problem \eqref{mainProblemSDP} via standard primal-dual projected gradient methods, such as the well-known projected Extragradient method \cite{extragradientK,NemirovskiEG} which converges to a saddle-point with rate $\mathcal{O}(1/t)$. The main drawback of these methods is the worst-case computation cost of the Euclidean projection onto the positive semidefinite (PSD) cone and the storage  for high-rank matrices.  
Even when the optimal solution to Problem \eqref{mainProblemSDP} is of low-rank, in worst-case, it cannot be ruled-out that the algorithm might need to maintain high-rank matrices throughout the optimization process.
The Euclidean projection of a point $\X\in\mathbb{S}^n$ which admits an eigen-decomposition $\X=\sum_{i=1}^n\lambda_i\v_i\v_i^{\top}$ onto the PSD cone  $\mathbb{S}^n_+:=\lbrace\Y\in\mathbb{S}^n~|~\Y\succeq0\rbrace$ is given by
\vskip -7.5mm
\begin{align} \label{def:EuclideanProjection}
\Pi_{\mathbb{S}^n_+}[\X]=\sum_{i=1}^n\max\lbrace\lambda_i,0\rbrace\v_i\v_i^{\top}.
\end{align}
\vskip -4mm
In particular,  computing the projection requires in worst-case a full singular value decomposition (SVD), which amounts to $\mathcal{O}(n^3)$ runtime in practical implementations, and to store high-rank matrices in memory. This forms the computational bottleneck in applying most standard first-order methods to large-scale SDPs.

%\textcolor{red}{I suggest using the following reasoning here: first, state that many well-studied and important SDPs have low-rank solutions (support this with citations), then state the problem with standard convex solvers --- even if the solution has low-rank, in worst-case, it cannot be ruled-out that the algorithm might need to go through high rank matrices which imply high-rank SVDs and memory requirements. Then, state that we might expect that at least for certain well-conditioned instances and at the proximity of a low-rank optimal solutions, that might be possible to work only with low-rank iterates and use this to motivate the use of the truncated projection. Then say we are interested in understanding scenarios in which this approach is provable guarantees.}

Nevertheless, when there exists a low-rank optimal solution, it seems plausible to expect that, at least for well-conditioned instances and at least in some proximity of the optimal solution, it would be possible for such optimization methods to maintain only low-rank iterates, making them far more efficient and scalable for these important and notoriously difficult optimization problems. This reasoning motivates the idea of enforcing low-rank iterates by  substituting all (full) Euclidean projections with truncated projections which we define as:
\vspace{-1mm}
\begin{align} \label{def:rankRprojection}
\widehat{\Pi}^r_{\mathbb{S}^n_+}[\X]\hspace{-0.8mm}:=\hspace{-0.8mm} \Pi_{\mathbb{S}^n_+}\hspace{-1mm}\left[\hspace{-0.3mm}\sum_{i=1}^r\lambda_i\v_i\v_i^{\top}\hspace{-0.5mm}\right]\hspace{-1mm} =\hspace{-1mm}\sum_{i=1}^r\max\lbrace\lambda_i,0\rbrace\v_i\v_i^{\top},
\end{align}
\vskip -3mm
where $r\in[n]$ is the rank-truncation parameter. That is, only the top $r$ components in the eigen-decomposition of $\X$ are used, resulting in a matrix of rank at most $r$. 

The rank-$r$ truncated projection merely requires the computation of  the leading $r$ components in the SVD of the matrix to project (a rank-$r$ SVD) for which the runtime scales as $\mathcal{O}(rn^2)$ (and even faster when the input matrix is sparse) using fast iterative SVD methods, and is thus  far more efficient. 
We build on the classical \textit{complementarity condition} of optimal solutions for semidefinite programming, see for instance \cite{strictCompSDP, wolkowicz2012handbook}, to establish when (full) Euclidean projections could  be replaced with their truncated counterparts.  

Throughout we assume the following two standard assumptions hold. These are sufficient for strong duality to hold and the minimum of \eqref{mainProblemSDP} to be obtained.
\begin{assumption}[Slater condition] \label{Ass:slater}
There exists $\bar{\X}\in\mathbb{S}^n_{++} = \{\X\in\mbS^n~|~\X\succ 0\}$
such that  $\mathcal{A}(\bar{\X})=\b$.
\end{assumption}
\begin{assumption}\label{Ass:primalBounded}
Problem \eqref{mainProblemSDP} is bounded from below.
\end{assumption}
It is well known that under Assumptions  \ref{Ass:slater}, \ref{Ass:primalBounded} the following optimality conditions holds.

\textbf{Optimality conditions:} Any primal-dual solution $(\X^*,\y^*)\in\mbS^n_+\times\reals^m$  to Problem  \eqref{mainProblemSDP} satisfies: $\mA(\X^*)=\b$,  $\nabla{}f(\X^*)-\mathcal{A}^{\top}(\y^*)\succeq0$, and $\trace\left({(\nabla{}f(\X^*)-\mathcal{A}^{\top}(\y^*))\X^*}\right)=0$. The latter equality is known as \textit{complementarity} and it implies there exists $r \geq r^* = \rank(\X^*)$ such that the following  \textbf{complementarity condition} holds:
\begin{align}\label{eq:compcond}
\rank(\nabla{}f(\X^*)-\mathcal{A}^{\top}(\y^*))=n-r.
\end{align}

%\textbf{Optimality conditions:} Any primal-dual solution $(\X^*,\y^*)\in\mbS^n_+\times\reals^m$  to Problem  \eqref{mainProblemSDP} satisfies  that $\y^*\in\argmax_{\y\in\reals^m}f(\X^*) + {\y}^{\top}(\b-\mathcal{A}(\X^*))$,  $\nabla{}f(\X^*)-\mathcal{A}^{\top}(\y^*)\succeq0$, and there exists $r \geq r^* = \rank(\X^*)$ such that the  \textbf{complementarity condition} holds:
%\begin{align}\label{eq:compcond}
%\rank(\nabla{}f(\X^*)-\mathcal{A}^{\top}(\y^*))=n-r.
%\end{align}
We shall refer to $\delta := \lambda_{n-r}(\nabla{}f(\X^*)-\mathcal{A}^{\top}(\y^*))$ (note $\delta > 0$) as the measure of complementarity  associated with  $(\X^*,\y^*)$. In particular, if \eqref{eq:compcond} holds for $r=r^*$ we shall say \textbf{strict complementarity} holds.

%We mention in passing that the term \textit{complementarity} traditionally refers to the optimality condition $\trace\left({(\nabla{}f(\X^*)-\mathcal{A}^{\top}(\y^*))\X^*}\right)=0$ which $(\X^*,\y^*)$ must satisfy. This indeed implies  \eqref{eq:compcond}, which is more convenient for our derivations. 

%\begin{definition}[complementarity for SDP \eqref{mainProblemSDP}]
%\label{def:strictComp}
%An optimal solution $\X^*$ of rank $r^*$ for Problem \eqref{mainProblemSDP} satisfies the complementarity condition with parameters $r\ge r^*$, $\delta >0$, if there exists
%$\y^*\in\argmax_{\y\in\reals^m}f(\X^*) + {\y}^{\top}(\b-\mathcal{A}(\X^*))$ such that: \textbf{I.} $\nabla{}f(\X^*)-\mathcal{A}^{\top}(\y^*)\succeq0$, 
%\%textbf{II.} $\rank(\nabla{}f(\X^*)-\mathcal{A}^{\top}(\y^*))=n-r$, and \textbf{III.} $\lambda_{n-r}(\nabla{}f(\X^*)-\mathcal{A}^{\top}(\y^*))\ge\delta$. 
%If the above holds for $r=r^*$ and some $\delta>0$, then we say that $\X^*$ satisfies \textbf{strict complementarity}.
%\end{definition}   

%Under Assumptions \ref{Ass:slater}, \ref{Ass:primalBounded} any optimal solution to Problem \eqref{mainProblemSDP}  satisfies  the conditions in Definition \ref{def:strictComp} for some $r,\delta$.

In a nutshell, we shall see that  if an optimal solution satisfies the complementarity condition \eqref{eq:compcond} with some parameter $r$, this implies that in a neighborhood of the optimal solution (the size of the neighborhood scales with the  complementarity measure $\delta$), indeed full Euclidean projections are equivalent  to their rank-$r$ truncated counterparts. Thus, the lower the rank $r$ for which the complementarity condition holds, the faster standard first-order methods could be implemented. In the most optimistic scenario, we have strict complementarity, i.e., $r = \rank(\X^*)$, which is a standard assumption for SDPs \cite{strictCompSDP, wolkowicz2012handbook, SDPstrictComplementarity}. Strict complementarity implies that locally, only SVDs of $\rank(\X^*)$ (which is assumed to be low) are required  in order to project onto $\mbS^n_+$.

\subsection{Motivation for Strict Complementarity}
As mentioned above, strict complementarity is a well established  assumption for SDPs \cite{strictCompSDP, wolkowicz2012handbook}.
In the case of linear SDPs where $f(\X)=\trace(\C\X)$ for some $\C\in\mathbb{S}^n$, \cite{SDPstrictComplementarity} prove that for a surjective map $\mA$, if slater's condition holds then for almost all cost matrices $\C$ strict complementarity holds, assuming a primal solution exists. 
In particular, they show this holds for problems such as Max-Cut, Orthogonal-Cut, and Product-SDP (optimization over product of spheres). They also show that under standard statistical assumptions, with high probability, the SDP relaxations for well-studied problems such as matrix completion, $Z_2$-synchronization and stochastic block-models also satisfy strict complementarity.
\cite{SDPstrictComplementarity} also numerically demonstrated, as we also do in our experiments (see  \cref{sec:experiments}),  
%In our numerical experiments in \cref{sec:experiments}, we observe 
that strict complementarity holds for real-world instances of Max-Cut.

\subsection{Main Contributions (informal)}
%The following summarizes our main contributions:

\textbf{Smooth nonlinear SDPs}: We prove that under a $(r,\delta)$-complementarity condition, when applying the Extragradient method to the saddle-point formulation \eqref{problem:generalSDPprimal}, if the method is initialized within a distance that scales with $\delta$ from the saddle-point, then all projections onto the PSD cone can be replaced with the $r$-truncated
projection $\widehat{\Pi}^r_{\mathbb{S}^n_+}[\cdot]$, that is only SVD computations of rank-$r$ are required, without changing the sequence of iterates produced by the method. This leads to a $\mathcal{O}(1/T)$ convergence rate w.r.t. both objective function and affine constraints for the average iterate, and to a  $\mathcal{O}(1/\sqrt{T})$ rate for the last iterate. No knowledge of $\delta$ is required for the algorithm and we give a simple  procedure to verify that the algorithm indeed converges correctly (i.e., as if using standard computationally-expensive projections). We also show that increasing the rank of SVD computations beyond the rank of the complementarity condition can dramatically increase the radius of the ball around the optimal solution in which  all projections are low-rank. 
See \cref{sec:SDP}.

%\item
\textbf{Extensions}: We extend the above result to the following more involved models of interest:  
\vspace{-10pt}
\begin{enumerate}
\item
\textit{Augmented Lagrangian} approach for solving Problem \eqref{mainProblemSDP},  see \cref{sec:Augmented}.
\item
\textit{Nonsmooth objective function}  which can be written as a maximum over smooth functions, see \cref{sec:nonsmoothReg}.
\item
\textit{Convex and smooth constraints}, in addition to affine constraints, see \cref{sec:nonlinearInequalities}.  
\end{enumerate}
%\item
%We prove that, for smooth nonlinear SDPs written as primal-dual saddle-point problems, when initiated in proximity to a primal-dual solution satisfying a generalized strict complementarity condition, the extragradient method necessitates computing only low-rank SVDs and there exists an iterate that converges with a rate of $\mathcal{O}(1/t)$. In addition, the last iterate converges with a rate of $\mathcal{O}(1/\sqrt{t})$. Moreover, we prove that augmenting the rank of the SVDs significantly expands the radius from the optimal solution within which we need to initialize the method. Finally, we suggest a practical and efficient approach to validate the accuracy of the truncated projections, even in instances where the extragradient method is not initialized in proximity to the saddle-point or when such a proximity cannot be verified. This method serves as a guarantee of the correct convergence of the extragradient method when using low-rank SVDs.
%See \cref{sec:SDP}

%\item We derive similar results to the previous item also for an augmented Lagrangian formulation of a smooth nonlinear SDP (see \cref{sec:Augmented}), for a case where the objective function is nonsmooth yet can be written as a maximum over smooth functions (see \cref{sec:nonsmoothReg}), and when the constraints include convex inequality constraints in addition to the linear equality constraints (see \cref{sec:nonlinearInequalities}).
%\item
\textbf{Numerical evidence:} we use a dataset of Max-Cut instances to demonstrate: \textbf{I.} the presence of strict complementarity in real-world instances, \textbf{II.} that with simple initializations and from relatively early stages of the run, the Extragradient method indeed only requires SVDs of rank that matches that of the (low-rank) optimal solution to converge with its standard guarantees, and \textbf{III.} that increasing the rank of SVD computations beyond that of the complementarity condition \eqref{eq:compcond},  dramatically improves the conditioning  w.r.t. the use of truncated projections. We  demonstrate this increase in rank of SVDs  results in only moderate increase in computation time. See \cref{sec:experiments}.

%\item We provide numerical experiments on a Max-Cut problem dataset, illustrating the presence of strict complementarity in these instances. Our results also show the convergence to a globally optimal solution, beginning from an intermediate iteration that progressively decreases as we moderately augment the rank of the truncated projections. Additionally, we observe that increasing the rank of the truncated projections may not impact the runtime significantly, or at least not to the same extent as the increase in rank. See \cref{sec:experiments}.

\subsection{Related work}
\label{sec:relatedWork}
Solving SDPs at scale has been widely studied in many papers. In \cite{DingSpectralBundle} the authors show that under a strict complementarity assumption for \textit{linear SDP} problems, the spectral bundle method obtains a sublinear convergence rate which can speed up to a linear convergence rate when close to an optimal dual solution, and can be implemented efficiently using matrix sketching techniques.
%They require $\mathcal{O}(r^3)$ runtime per iteration and using a matrix sketching technique require only $\mathcal{O}(rn+m)$ storage for $r$ at least the size of the greatest rank out of the ranks of all optimal solutions.
 \cite{Yurtsever2019ScalableSP} also used  sketching techniques to solve \textit{linear SDPs} while storing only low-rank matrices and using efficient computation. %require $\mathcal{O}(Rn+m)$  runtime per iteration and $\mathcal{O}(Rn+m)$ storage, where $R$ is the sketch size. 
There also exist conditional gradient methods for solving SDPs which avoid expensive projections such at \cite{CG3, CG2, garberCG}. These methods obtain slower rates of $\mathcal{O}(1/\sqrt{T})$ which speed up to $\mathcal{O}(1/T)$ for strongly convex settings \cite{garberCG}, however they often require to store high-rank matrices in memory. 

Aside from  convex methods, considerable efforts have been invested in developing the nonconvex Burer-Monteiro approach \cite{BurerM03} for solving low-rank SDPs, which involves replacing the PSD variable $\X$ with an explicit factorization of the form $\X=\V\V^{\top}$, where $\V\in\reals^{n\times r}$ for some $r\ge\rank(\X^*)$. Problem \eqref{mainProblemSDP} can be written as the following nonconvex problem:
$\min_{\V\in\reals^{n\times r}}\lbrace f(\V\V^{\top})~|~\mA(\V\V^{\top})=\b\rbrace$.
Riemannian gradient and  Riemannian trust region methods \cite{boumal2016non, BM1,BM2} can provably converge to second order stationary points for smooth manifolds.
%\textcolor{red}{Which algorithms can even solve this factorized problem to obtain SOSP?}
When all second-order stationary points are also global optimal solutions, a convergence to a globally optimal solution is assured. However, following \cite{waldspurger2020rank} the authors of \cite{SDPstrictComplementarity} established that for several examples of linear SDPs ($f(\X)=\trace(\C\X)$), such as the Max-Cut SDP, and any $r$ satisfying $r(r+1)/2+r\le m$, there exist positive measure sets of cost matrices $\C$ for which a unique rank-$1$ optimal solution exists that also satisfies strict complementarity, yet there exists a non-optimal second order stationary point. In these cases there is no guarantee that the Burer-Monteiro approach will converge to a global optimal solution. 
Consequently, to guarantee convergence to a global solution of a linear SDP, these methods require a factorization of rank at least $r=\mathcal{O}(\sqrt{m})$, leading to a minimum storage requirement of $\mathcal{O}(n\sqrt{m})$. 

In a recent line of work \cite{garberNuclearNormMatrices, garberStochasticLowRank, ourExtragradient, garber2023efficiency}, it has been shown that for certain  low-rank convex optimization problems which satisfy a generalized strict complementarity condition, in proximity of the optimal solution, projected gradient methods could be implemented using only low-rank SVDs. In particular, \cite{ourExtragradient} established this for optimization problems over the spectrahedron\footnote{set of PSD matrices with unit trace} involving nonsmooth objective functions expressible as saddle-point problems.
Our paper builds on this approach, however it considers more complex and practically important scenarios that also encompass affine and convex inequality constraints.

\section{Smooth Semidefinite Programming}
\label{sec:SDP}
In this section we prove our main result regarding Problem \eqref{mainProblemSDP}: if the complementarity condition \eqref{eq:compcond} holds with low rank $r$,  then when initialized in the proximity of a saddle-point of the equivalent formulation \eqref{problem:generalSDPprimal},
the Extragradient method (see Algorithm \ref{alg:EG}) could be used to solve Problem \eqref{mainProblemSDP} using only low-rank SVD-based truncated projections $\widehat{\Pi}^r_{\mathbb{S}^n_+}[\cdot]$ (as defined in \eqref{def:rankRprojection}) instead of computationally-expensive standard projections which require full-rank SVDs. Towards this let us denote the Lagrangian $\mL(\X,\y):=f(\X) + {\y}^{\top}(\b-\mathcal{A}(\X))$.

The following lemma is central to our analysis and establishes how the complementarity condition induces a ball around a saddle-point of \eqref{problem:generalSDPprimal} in which the Euclidean projection onto the PSD cone could be indeed replaced with its $r$-truncated version when using projected gradient steps (as applied for instance in Algorithm \ref{alg:EG}).
%shows that if for some $\X^*$ and $\y^*$ generalized strict complementarity holds for some $r\ge r^*=\rank(\X^*)$, then there exists a radius around the saddle-point for which any Euclidean projection onto the PSD cone can be replaced with a rank-$r$ truncated projection since both will return the same solution. 
The proof is given in Appendix \ref{appx:proofLemma24}. Throughout the sequel we denote $\Vert\mA\Vert:=\max\lbrace\Vert\mA(\X)\Vert_2~|~\Vert\X\Vert_F\le1\rbrace$ and $\Vert(\cdot,\cdot)\Vert$ to be the Euclidean norm over the product space $\mbS^n\times\reals^{m}$.
\begin{lemma} \label{lemma:radius}
Let $(\X^*,\y^*)$ be a saddle-point of Problem \eqref{problem:generalSDPprimal} and denote the rank of the complementarity condition $\tilde{r}=n-\rank(\nabla_{\X}\mL^*)$, where $\nabla_{\X}\mL^*:=\nabla_{\X}\mL(\X^*,\y^*)$. For any $r\ge\tilde{r}$, $\eta\ge0$, and $\X\in\mbS^n,(\Z,\w)\in\mathbb{S}^n_+\times\reals^m$, if
%\vspace{-5pt}
\begin{align} \label{ineq:boundRadiusLemma}
& \max\lbrace\Vert\X-\X^*\Vert_F,\Vert(\Z,\w)-(\X^*,\y^*)\Vert\rbrace \nonumber
\\  & \le \frac{\eta\max\left\lbrace\lambda_{n-r}(\nabla_{\X}\mL^*),\sqrt{\tilde{r}}\lambda_{n-r+\tilde{r}-1}(\nabla_{\X}\mL^*)\right\rbrace}{1+\sqrt{2}\eta\max\lbrace\beta,\Vert\mA\Vert\rbrace},
\end{align}
%\vskip -4mm
then $\rank(\Pi_{\mathbb{S}^n_+}[\X-\eta\nabla_{\X}\mL(\Z,\w)])\le r$ (meaning $\Pi_{\mathbb{S}^n_+}[\X-\eta\nabla_{\X}\mL(\Z,\w)]) =\widehat{\Pi}^r_{\mathbb{S}^n_+}[\X-\eta\nabla_{\X}\mL(\Z,\w)]$). 
\end{lemma}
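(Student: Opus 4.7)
The objective is to establish $\lambda_{r+1}(\M)\le 0$ for $\M:=\X-\eta\nabla_{\X}\mL(\Z,\w)$; this immediately yields $\rank(\Pi_{\mathbb{S}^n_+}[\M])\le r$ and hence the equivalence with the $r$-truncated projection, since the full Euclidean projection only retains the nonnegative eigencomponents. The whole argument is a perturbation analysis around the anchor $\M^*:=\X^*-\eta\nabla_{\X}\mL^*$, whose spectrum is computable in closed form from complementarity.

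First I would record the spectrum of $\M^*$. The conditions $\X^*,\nabla_{\X}\mL^*\succeq 0$ together with $\trace(\X^*\nabla_{\X}\mL^*)=0$ force their ranges to be orthogonal, so they admit a common eigenbasis. In that basis $\M^*$ is diagonal with the $\rank(\X^*)$ strictly positive eigenvalues of $\X^*$, then $\tilde r-\rank(\X^*)$ zeros on the shared null space, then the $n-\tilde r$ strictly negative values $-\eta\lambda_i(\nabla_{\X}\mL^*)$ for $i=1,\ldots,n-\tilde r$. Sorting, one obtains $\lambda_i(\M^*)=-\eta\lambda_{n-i+1}(\nabla_{\X}\mL^*)$ for every $i>\tilde r$; in particular $\lambda_{r+1}(\M^*)=-\eta\lambda_{n-r}(\nabla_{\X}\mL^*)<0$ for all $r\ge\tilde r$.

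Next I would control the perturbation $E:=\M-\M^*=(\X-\X^*)-\eta\bigl(\nabla f(\Z)-\nabla f(\X^*)\bigr)+\eta\mA^{\top}(\w-\y^*)$. Using $\beta$-smoothness of $f$, the operator-norm definition $\|\mA^{\top}(\w-\y^*)\|_F\le\|\mA\|\cdot\|\w-\y^*\|_2$, and the elementary inequality $\|\Z-\X^*\|_F+\|\w-\y^*\|_2\le\sqrt{2}\,\|(\Z,\w)-(\X^*,\y^*)\|$, the triangle inequality yields $\|E\|_F\le D\bigl(1+\sqrt{2}\eta\max\{\beta,\|\mA\|\}\bigr)$, where $D$ denotes the left-hand side of~\eqref{ineq:boundRadiusLemma}. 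The first branch of the max follows from standard Weyl: $\lambda_{r+1}(\M)\le\lambda_{r+1}(\M^*)+\|E\|_{\rm op}\le-\eta\lambda_{n-r}(\nabla_{\X}\mL^*)+\|E\|_F$, which combined with the first branch of the hypothesis gives $\lambda_{r+1}(\M)\le 0$.

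For the second branch I would invoke the shifted Weyl inequality $\lambda_{i+j-1}(\M^*+E)\le\lambda_i(\M^*)+\lambda_j(E)$ with $i=r-\tilde r+2$ and $j=\tilde r$. In the regime $r\ge 2\tilde r-1$ (otherwise the second branch already equals zero and the max reduces to the first), $i>\tilde r$ and the spectrum computation gives $\lambda_i(\M^*)=-\eta\lambda_{n-r+\tilde r-1}(\nabla_{\X}\mL^*)$. The essential gain is the dimension-aware bound $\lambda_{\tilde r}(E)\le\sigma_{\tilde r}(E)\le\|E\|_F/\sqrt{\tilde r}$, valid since $E$ is symmetric and the $\tilde r$-th singular value of any matrix is at most its Frobenius norm divided by $\sqrt{\tilde r}$. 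Plugging in the perturbation bound recovers the second branch of~\eqref{ineq:boundRadiusLemma}. The main obstacle is spotting this index-shifted Weyl step: the naive operator-norm use yields only the first branch, and one has to notice that trading a weaker (but still strictly negative) eigenvalue of $\M^*$ against a $\sqrt{\tilde r}$-reduction on $\lambda_{\tilde r}(E)$ delivers a strictly larger admissible radius whenever there is rank slack $r-\tilde r$.
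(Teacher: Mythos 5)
Your proposal is correct and follows essentially the same route as the paper's proof: compute the spectrum of the anchor $\X^*-\eta\nabla_{\X}\mL^*$ via complementarity, bound $\Vert\P-\P^*\Vert_F$ through smoothness of $f$ and the operator norm of $\mA$, and then combine standard Weyl for the first branch with the index-shifted Weyl inequality plus the $\lambda_{\tilde r}(E)\le\Vert E\Vert_F/\sqrt{\tilde r}$ bound for the second branch, noting that the second branch is vacuous when $r<2\tilde r-1$. Your justification of that last bound via $\lambda_{\tilde r}(E)\le\sigma_{\tilde r}(E)$ is in fact a slightly more careful rendering of the same step in the paper.
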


%\textcolor{red}{Need to add the usual discussion here, for instance why it makes sense to have a max over two terms in the definition of $R_0(r)$ in the theorem and so on....Need to discuss the connection here between the parameters $r,\tilde{r}$ and the definition of generalized strict complementarity given before. }
\begin{remark}
Note that as the rank of the truncated projections $r$ increases beyond the minimal value corresponding to the complementarity condition ($\tilde{r}$), the eigenvalues $\lambda_{n-r}(\nabla_{\X}\mL(\X^*,\y^*))$ and $\lambda_{n-r+\tilde{r}-1}(\nabla_{\X}\mL(\X^*,\y^*))$ also increase. In particular, when $r$ is  sufficiently greater than the complementarity parameter $\tilde{r}$, the second term inside the $\max$ in the right-hand side (RHS) of  \eqref{ineq:boundRadiusLemma} can be significantly larger than the first term, as it also scales with $\sqrt{\tilde{r}}$.
Consequently, increasing $r$ increases the RHS of \eqref{ineq:boundRadiusLemma} --- representing the radius of the ball around an optimal solution within which rank-$r$ truncated projections coincide with accurate projections.
Of course, increasing $r$ requires higher-rank SVDs which increases the computation time.
% \textcolor{red}{missing discussion on increasing $r$ - why it is a parameter and why it makes sense to play with it}
%Note that when using rank-$r$ truncated projections for $r$ sufficiently greater than the complementarity parameter $\tilde{r}$, the second term inside the $\max$ in the expression for the radius $R_0(r)$ (within which the method needs to be initialized) can be significantly larger than the first term, as it also scales with $\sqrt{\tilde{r}}$. \textcolor{red}{missing discussion on increasing $r$ - why it is a parameter and why it makes sense to play with it}
\end{remark}

The following standard lemma (e.g.,  \cite{SABACHLagrangian}, yet a proof is given in \cref{appx:proofLemma25}) establishes that an approximated saddle-point of Problem \eqref{problem:generalSDPprimal} implies an approximated solution to Problem \eqref{mainProblemSDP}.
\begin{lemma} \label{lemma:boundf_and_norm}
Let $\X^*\succeq0$ be an optimal solution to Problem \eqref{mainProblemSDP} and let $(\widehat{\X},\widehat{\y})\in\mathbb{S}^n_+\times\reals^m$. Let $\y^*\in\argmax_{\y\in\reals^m}\mL(\X^*,\y)$.
Assume that for all $\y\in\lbrace\y\in\reals^m\ \vert\ \Vert\y\Vert_2\le2\Vert\y^*\Vert_2\rbrace$ it holds that
$\mathcal{L}(\widehat{\X},\y)-\mathcal{L}(\X^*,\widehat{\y})\le\varepsilon$.
Then,
$f(\widehat{\X})-f(\X^*)\le\varepsilon$ and $\Vert\mA(\widehat{\X})-\b\Vert_2\le\varepsilon/\Vert\y^*\Vert_2$.
\end{lemma}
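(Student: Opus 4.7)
The plan is to exploit the saddle-point structure: by Slater's condition (Assumption \ref{Ass:slater}) strong duality holds, so any primal optimal $\X^*$ together with its Lagrange multiplier $\y^*$ forms a saddle-point of $\mL$ over $\mbS^n_+\times\reals^m$. In particular $\mA(\X^*)=\b$, which will be used repeatedly to simplify $\mL(\X^*,\cdot)$ and to produce a useful lower bound on $f(\widehat{\X})-f(\X^*)$.

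First I would record the two simplifications. Since $\mA(\X^*)=\b$, we have $\mL(\X^*,\widehat{\y})=f(\X^*)$, so the assumption reduces to
\[
f(\widehat{\X})+\y^\top(\b-\mA(\widehat{\X}))-f(\X^*)\le\varepsilon\qquad\forall\,\y\in\reals^m,\ \Vert\y\Vert_2\le 2\Vert\y^*\Vert_2.
\]
Plugging in $\y=0$ (which is admissible) immediately yields the objective bound $f(\widehat{\X})-f(\X^*)\le\varepsilon$.

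For the feasibility bound I would use the saddle-point inequality $\mL(\widehat{\X},\y^*)\ge\mL(\X^*,\y^*)=f(\X^*)$, which rearranges to
\[
f(\widehat{\X})-f(\X^*)\ge (\y^*)^\top(\mA(\widehat{\X})-\b).
\]
Then I would pick the test vector
\[
\y=\y^*+\Vert\y^*\Vert_2\cdot\frac{\b-\mA(\widehat{\X})}{\Vert\b-\mA(\widehat{\X})\Vert_2},
\]
assuming $\mA(\widehat{\X})\neq\b$ (otherwise the second claim is trivial). By the triangle inequality $\Vert\y\Vert_2\le 2\Vert\y^*\Vert_2$, so it is admissible. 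Substituting this choice into the reduced inequality and combining with the saddle-point lower bound on $f(\widehat{\X})-f(\X^*)$, the term $(\y^*)^\top(\b-\mA(\widehat{\X}))$ cancels and leaves exactly $\Vert\y^*\Vert_2\Vert\mA(\widehat{\X})-\b\Vert_2\le\varepsilon$, which gives the second bound. (The case $\y^*=0$ is a degenerate edge case but makes the bound $\varepsilon/\Vert\y^*\Vert_2$ trivially $+\infty$.)

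There is no real obstacle here; the only delicate point is the choice of $\y$ for the feasibility inequality, which must be norm-bounded by $2\Vert\y^*\Vert_2$ and simultaneously produce both $\Vert\y^*\Vert_2\Vert\mA(\widehat{\X})-\b\Vert_2$ and the canceling $(\y^*)^\top(\b-\mA(\widehat{\X}))$ term. Decomposing $\y$ as the optimal dual $\y^*$ plus a norm-$\Vert\y^*\Vert_2$ vector aligned with $\b-\mA(\widehat{\X})$ is the natural way to achieve this.
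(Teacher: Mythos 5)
Your proof is correct and follows essentially the same route as the paper's: reduce the assumption using $\mA(\X^*)=\b$, test with an admissible dual vector aligned with the residual $\b-\mA(\widehat{\X})$, and invoke the saddle-point inequality $\mL(\X^*,\y^*)\le\mL(\widehat{\X},\y^*)$. The only cosmetic differences are that the paper uses the single test vector $2\Vert\y^*\Vert_2(\b-\mA(\widehat{\X}))/\Vert\mA(\widehat{\X})-\b\Vert_2$ for both bounds and closes with Cauchy--Schwarz on ${\y^*}^{\top}(\b-\mA(\widehat{\X}))$, whereas you take $\y=0$ for the objective bound and center your test vector at $\y^*$ so that the cross term cancels exactly.
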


\subsection{Projected Extagradient Method for Problem \eqref{mainProblemSDP}}

The projected extragradient method is a well-known method for solving saddle-point problems \cite{extragradientK,NemirovskiEG}. The method is written in Algorithm \ref{alg:EG}. 
\vspace{-3mm}
\begin{algorithm}[H]
	\caption{Extragradient method for saddle-point problems (see also \cite{extragradientK,NemirovskiEG})}\label{alg:EG}
	\begin{algorithmic}
		\STATE \textbf{Input:} step-size $\eta\ge0$ 
		\STATE \textbf{Initialization:} $(\X_1,\y_1)\in\mathbb{S}^n_+\times\reals^{m}$
		\FOR{$t = 1,2,...$} 
            \STATE $\Z_{t+1}=\Pi_{\mbS^n_+}[\X_t-\eta\nabla_{\X}\mL(\X_t,\y_t)]$   
			\STATE $\w_{t+1}=\y_t+\eta\nabla_{\y}\mL(\X_t,\y_t)$
			\STATE $\X_{t+1}=\Pi_{\mbS^n_+}[\X_t-\eta\nabla_{\X}\mL(\Z_{t+1},\w_{t+1})]$   
			\STATE $\y_{t+1}=\y_t+\eta\nabla_{\y}\mL(\Z_{t+1},\w_{t+1})$
        \ENDFOR
	\end{algorithmic}
\end{algorithm}
\vspace{-3mm}
We now state our main result. We give a short outline of the proof. The complete proof is given in \cref{appx:proofThm26}.
\begin{theorem} \label{thm:SDP}
Fix an optimal solution $\X^*\succeq0$ to Problem \eqref{mainProblemSDP}. Let $\y^*\in\reals^m$ be a corresponding dual solution and suppose $(\X^*,\y^*)$ satisfies the complementarity condition \eqref{eq:compcond} with parameter $\tilde{r}:=n-\rank(\nabla_{\X}\mL^*)$, where $\nabla_{\X}\mL^*:=\nabla_{\X}\mL(\X^*,\y^*)$. %Assume the complementarity condition (\cref{def:strictComp}) holds for some $\tilde{r}:=n-\rank(\nabla_{\X}\mL^*)$. 
%Fix an optimal solution $\X^*\succeq0$ to Problem \eqref{mainProblemSDP} and let $\y^*\in\argmax_{\y\in\reals^m}\mL(\X^*,\y)$ such that $\Vert\y^*\Vert_2$ is bounded \textcolor{red}{rephrase?}. Denote $\tilde{r}:=n-\rank(\nabla_{\X}\mL^*)$ where $\nabla_{\X}\mL^*:=\nabla_{\X}\mL(\X^*,\y^*)$. 
Let $\lbrace(\X_t,\y_t)\rbrace_{t\ge1}$ and $\lbrace(\Z_t,\w_t)\rbrace_{t\ge2}$ be the sequences of iterates generated by Algorithm \ref{alg:EG} with a fixed step-size
$\eta \le \frac{1}{\sqrt{2(\beta^2+\Vert\mA\Vert^2)}}$. Fix  $r \geq \tilde{r}$ and assume the initialization $(\X_1,\y_1)$ satisfies  $\Vert(\X_1,\y_1)-(\X^*,\y^*)\Vert_F\le R_0(r)$, where
\begin{align*}
& R_0(r):= 
\frac{\eta\max\left\lbrace\lambda_{n-r}(\nabla_{\X}\mL^*),\sqrt{\tilde{r}}\lambda_{n-r+\tilde{r}-1}(\nabla_{\X}\mL^*)\right\rbrace}{2(1+\sqrt{2}\eta\max\lbrace\beta,\Vert\mA\Vert\rbrace)}.
\end{align*}
Then, for all $t\ge1$, the projections $\Pi_{\mbS^n_+}[\cdot]$ in Algorithm \ref{alg:EG} could be replaced with rank-$r$ truncated projections (as defined in \eqref{def:rankRprojection}) without changing the sequences $\lbrace(\X_t,\y_t)\rbrace_{t\ge1}$ and $\lbrace(\Z_t,\w_t)\rbrace_{t\ge2}$. Moreover, for all $T\ge0$ it holds that
%\textcolor{red}{
\begin{align*}
& f\left(\frac{1}{T}\sum_{t=1}^T \Z_{t+1}\right)-f(\X^*)\le R^2/(2\eta T),
\\ & \left\Vert\mA\left(\frac{1}{T}\sum_{t=1}^T \Z_{t+1}\right)-\b\right\Vert_2 \le R^2/(2\Vert\y^*\Vert_2\eta T),
\end{align*}
where \\
$R^2:=\Vert\X_1-\X^*\Vert_F^2+\max\limits_{\y\in\lbrace\y\in\reals^m\ \vert\ \Vert\y\Vert_2\le2\Vert\y^*\Vert_2\rbrace}\Vert\y_1-\y\Vert_2^2$.
%}

Furthermore, if we take a step-size $\eta=\frac{1}{2\sqrt{\beta^2+\Vert\mA\Vert^2}}$, then for all $T\ge0$ it holds that
\begin{align*}
& f(\Z_{T+1})-f(\X^*)\le R_1/(\eta\sqrt{T}),
\\ & \Vert\mA(\Z_{T+1})-\b\Vert_2\le R_1/(\Vert\y^*\Vert_2\eta\sqrt{T}),
\end{align*}
 $$R_1:=3\sqrt{2}\max\lbrace 2R_0(r),3\Vert\y^*\Vert_2\rbrace\Vert(\X_1,\y_1)-(\X^*,\y^*)\Vert.$$
\end{theorem}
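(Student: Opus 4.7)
The plan is to decouple the claim into two parts: first, show by induction that both the main iterates $(\X_t,\y_t)$ and the look-ahead iterates $(\Z_{t+1},\w_{t+1})$ remain inside a ball around $(\X^*,\y^*)$ on which \cref{lemma:radius} guarantees that the exact PSD projections coincide with their rank-$r$ truncated counterparts; second, invoke the standard Extragradient convergence analysis for monotone, Lipschitz saddle-point operators to derive the two gap bounds, which \cref{lemma:boundf_and_norm} then translates into the advertised bounds on the objective value and the constraint violation.

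For the first part, the saddle-point operator $F(\X,\y):=(\nabla f(\X)-\mA^{\top}\y,\ \mA(\X)-\b)$ is monotone (by convex-concavity of $\mL$) and Lipschitz on $\mbS^n\times\reals^m$ with constant at most $L:=\sqrt{2(\beta^2+\Vert\mA\Vert^2)}$ (a direct AM-GM computation on $\|F(\X_1,\y_1)-F(\X_2,\y_2)\|^2$), so the step-size condition forces $\eta L\le1$. The classical Korpelevich inequality
\begin{align*}
\Vert(\X_{t+1},\y_{t+1})\hspace{-0.5mm}-\hspace{-0.5mm}(\X^*,\y^*)\Vert^2 &\le \Vert(\X_t,\y_t)\hspace{-0.5mm}-\hspace{-0.5mm}(\X^*,\y^*)\Vert^2 \\
&\quad-(1-\eta^2L^2)\Vert(\X_t,\y_t)\hspace{-0.5mm}-\hspace{-0.5mm}(\Z_{t+1},\w_{t+1})\Vert^2
\end{align*}
then yields $\Vert(\X_t,\y_t)-(\X^*,\y^*)\Vert\le R_0(r)$ for all $t$ by induction on the hypothesis $\Vert(\X_1,\y_1)-(\X^*,\y^*)\Vert\le R_0(r)$. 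For the look-ahead iterate, nonexpansiveness of the projection, together with Lipschitzness of $F$ and monotonicity, gives
\[
\Vert(\Z_{t+1},\w_{t+1})-(\X^*,\y^*)\Vert \le \sqrt{1+\eta^2L^2}\,\Vert(\X_t,\y_t)-(\X^*,\y^*)\Vert \le \sqrt{2}\,R_0(r).
\]
Since $R_0(r)$ is exactly one half of the RHS of \eqref{ineq:boundRadiusLemma}, both quantities $\Vert\X_t-\X^*\Vert_F$ and $\Vert(\Z_{t+1},\w_{t+1})-(\X^*,\y^*)\Vert$ lie below that threshold; invoking \cref{lemma:radius} with $(\X,(\Z,\w))=(\X_t,(\X_t,\y_t))$ for the $\Z_{t+1}$ update and with $(\X,(\Z,\w))=(\X_t,(\Z_{t+1},\w_{t+1}))$ for the $\X_{t+1}$ update identifies the exact with the rank-$r$ truncated projections for every $t$.

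For the average-iterate $\mathcal{O}(1/T)$ rate, summing the standard one-step Extragradient variational inequality
\[
2\eta\bigl(\mL(\Z_{t+1},\y)-\mL(\X,\w_{t+1})\bigr) \le \Vert(\X_t,\y_t)-(\X,\y)\Vert^2 - \Vert(\X_{t+1},\y_{t+1})-(\X,\y)\Vert^2
\]
(valid because $\eta L\le 1$ absorbs the residual term) over $t=1,\dots,T$ and applying Jensen's inequality in the convex and concave arguments of $\mL$ yields $\mL(\bar\Z,\y)-\mL(\X,\bar\w)\le(2\eta T)^{-1}(\Vert\X_1-\X\Vert_F^2+\Vert\y_1-\y\Vert_2^2)$ for any admissible $(\X,\y)$. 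Setting $\X=\X^*$ and maximizing $\y$ over $\{\y\in\reals^m:\Vert\y\Vert_2\le 2\Vert\y^*\Vert_2\}$ matches the RHS with $R^2/(2\eta T)$, and \cref{lemma:boundf_and_norm} then converts this gap bound into the two displayed estimates on $f(\bar\Z)-f(\X^*)$ and $\Vert\mA(\bar\Z)-\b\Vert_2$.

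The main obstacle is the $\mathcal{O}(1/\sqrt{T})$ last-iterate rate, which does not follow from averaging. Under the sharper step-size $\eta=1/(2\sqrt{\beta^2+\Vert\mA\Vert^2})$ we get $\eta^2 L^2\le 1/2$, so the Korpelevich residual term telescopes to the best-iterate bound $\min_{t\le T}\Vert(\X_t,\y_t)-(\Z_{t+1},\w_{t+1})\Vert=\mathcal{O}(\Vert(\X_1,\y_1)-(\X^*,\y^*)\Vert/\sqrt{T})$. I would then adapt the now-standard "best-to-last" monotonicity reductions for Extragradient on monotone Lipschitz operators (in the spirit of Golowich--Pattathil--Daskalakis or Cai--Oikonomou--Zheng) to promote this into a last-iterate residual bound, and use Lipschitzness of $F$ to deduce $\Vert F(\Z_{T+1},\w_{T+1})\Vert=\mathcal{O}(1/\sqrt{T})$. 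Finally, bounding the restricted saddle-point gap $\max_{\Vert\y\Vert_2\le 2\Vert\y^*\Vert_2}\mL(\Z_{T+1},\y)-\mL(\X^*,\w_{T+1})$ by $\Vert F(\Z_{T+1},\w_{T+1})\Vert$ times the diameter of the relevant search region (which, by the iterate bound from the first part and the dual ball dictated by \cref{lemma:boundf_and_norm}, is of order $\max\{R_0(r),\Vert\y^*\Vert_2\}$) produces the bound $R_1/(\eta\sqrt{T})$, again converted via \cref{lemma:boundf_and_norm} into the final objective and constraint bounds.
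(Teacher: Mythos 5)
Your treatment of the low-rank invariance and of the $\mathcal{O}(1/T)$ average-iterate rate is essentially the paper's own argument: the paper likewise shows $\Vert(\X_t,\y_t)-(\X^*,\y^*)\Vert$ is nonincreasing (citing Lemma 8 of \cite{ourExtragradient} rather than rederiving Korpelevich's inequality), bounds the look-ahead by a constant multiple of that distance (the paper uses the cruder factor $1+\eta\beta_{\mL}\le 2$ where you use $\sqrt{1+\eta^2L^2}\le\sqrt{2}$; both stay below the Lemma~\ref{lemma:radius} radius $2R_0(r)$), applies Lemma~\ref{lemma:radius} exactly as you do to both projection steps, and then sums the one-step extragradient inequality, applies Jensen, and converts via Lemma~\ref{lemma:boundf_and_norm}. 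So up to the last-iterate claim your proposal matches the intended proof, with only cosmetic differences in constants and in whether the auxiliary inequalities are cited or rederived.

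The last-iterate part contains a step that fails as written: you propose to deduce $\Vert F(\Z_{T+1},\w_{T+1})\Vert=\mathcal{O}(1/\sqrt{T})$ and bound the restricted gap by $\Vert F\Vert$ times a diameter. In this constrained setting the operator does not vanish at the saddle point: $F(\X^*,\y^*)=(\nabla f(\X^*)-\mA^{\top}(\y^*),\,\mathbf{0})$ and $\nabla_{\X}\mL^*$ is a nonzero PSD matrix of rank $n-\tilde{r}$, with $-F(\X^*,\y^*)$ lying only in the normal cone of $\mbS^n_+\times\reals^m$. Hence $\Vert F(\Z_{T+1},\w_{T+1})\Vert$ cannot decay to zero, and the quantity that the best-iterate telescoping and the ``best-to-last'' monotonicity argument actually control is the natural/tangent residual (equivalently, the restricted linearized gap $\max_{(\X',\y')\in\mD}\langle(\Z_{T+1},\w_{T+1})-(\X',\y'),F(\Z_{T+1},\w_{T+1})\rangle$, where the normal-cone component contributes nonpositively). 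This is precisely how the paper proceeds: it bounds $\mL(\Z_{T+1},\y)-\mL(\X^*,\w_{T+1})$ by that restricted linearized gap over the ball $\mD$ of radius $\max\lbrace 2R_0(r),3\Vert\y^*\Vert_2\rbrace$ and invokes Theorem 6 of \cite{lastIterateDecrease}, which gives the $\mathcal{O}\bigl(1/(\eta\sqrt{1-\eta^2\beta_{\mL}^2}\sqrt{T})\bigr)$ decay of exactly this projected quantity; the step-size $\eta=1/(2\sqrt{\beta^2+\Vert\mA\Vert^2})$ then yields the extra $\sqrt{2}$ in $R_1$. Your plan is repairable by replacing $\Vert F\Vert$ with the tangent residual throughout (the references you name do contain this), but as stated the deduction and the ensuing gap-to-residual conversion are not correct.
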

%\textcolor{red}{It is good to explicitly spell out what bounds we get (both initialization and rate) when standard strict complementarity holds and we take $r=r^*$ and $\eta$ to be the typical step-size.}
If strict complementarity holds, i.e., $\tilde{r}=r^*:=\rank(\X^*)$, and we use rank-$r^*$ truncated projections, then taking a step-size $\eta = 1/\sqrt{2(\beta^2+\Vert\mA\Vert^2)}$ and initializing with some point within a radius of $R_0(r^*)=
\frac{\lambda_{n-r^*}(\nabla_{\X}\mL(\X^*,\y^*))}{2\sqrt{2}(\sqrt{\beta^2+\Vert\mA\Vert^2}+\max\lbrace\beta,\Vert\mA\Vert\rbrace)}$ from the saddle-point, then for all $T\ge0$  it holds that
\begin{align*}
& f\left({\frac{1}{T}\sum_{t=1}^T \Z_{t+1}}\right)-f(\X^*)\le (R^2\sqrt{\beta^2+\Vert\mA\Vert^2})/(\sqrt{2}T),
\\ & \Vert\mA\left({\frac{1}{T}\sum_{t=1}^T \Z_{t+1}}\right)-\b\Vert_2\le (R^2\sqrt{\beta^2+\Vert\mA\Vert^2})/(\sqrt{2}\Vert\y^*\Vert_2T).
\end{align*}

\begin{remark}
The convergence rate in \cref{thm:SDP} is dependent on  the norm $\Vert\y^*\Vert_2$. Under Assumptions \ref{Ass:slater} and \ref{Ass:primalBounded} there exists a bounded dual solution $\y^*$. We derive explicit bounds in \cref{appx:boundsOptimalDualSolutions}.
\end{remark}

\paragraph{Proof idea of \cref{thm:SDP}}
Our proof extends the ideas from  \cite{ourExtragradient}. As can be seen from \eqref{def:EuclideanProjection}, when projecting a matrix onto the PSD cone, if the $(r+1)$-th eigenvalue of the matrix to project is non-positive, then the projection will be of rank at most $r$. The standard first-order optimality condition implies that for any step-size $\eta$, the matrix $\X^*-\eta\nabla_{\X}\mL(\X^*,\y^*)$  indeed satisfies  $\lambda_{r+1}(\X^*-\eta\nabla_{\X}\mL(\X^*,\y^*))<0$. Leveraging the complementarity condition, the smoothness of $\mL$,  and using perturbation bounds, it can be established that any points $(\X,\y),(\Z,\w)$ close enough to $(\X^*,\y^*)$ also satisfy that $\lambda_{r+1}(\X-\eta\nabla_{\X}\mL(\Z,\w))<0$. Thus, a projected extragradient step from these points will also return a matrix of rank at most $r$, and so it is sufficient to use rank-$r$ truncated projections. 
Therefore, if we initialize the algorithm sufficiently close to $(\X^*,\y^*)$, and employ  nonexpansive-type properties of the iterates of the extragradient method, which guarantee the iterates remain in proximity of  $(\X^*,\y^*)$, we can conclude that the  method maintains its standard convergence rate of $\mathcal{O}(1/T)$ while requiring only rank-$r$ truncated projections. The $\mathcal{O}(1/\sqrt{T})$ last iterate convergence rate has been established recently in \cite{lastIterateDecrease}.

\subsection{Certificates for Correctness of Low-Rank Truncated Projections}
From a practical point of view, the conditions  in \cref{thm:SDP} may not always be satisfied or easily verifiable. Nevertheless, there is a simple and efficient procedure for establishing whether the Euclidean projections onto the PSD cone of $\X_t-\eta\nabla_{\X}\mL(\X_t,\y_t)$ and $\X_t-\eta\nabla_{\X}\mL(\Z_{t+1},\w_{t+1})$ required in \cref{alg:EG} will return the same matrices as their rank-$r$ truncated projections (as defined in \eqref{def:rankRprojection}). 
By the definition of the Euclidean projection of some  $\X\in\mathbb{S}^n$ onto the PSD cone (see \eqref{def:EuclideanProjection}), the projection $\Pi_{\mathbb{S}^n_+}[\X]$ returns a matrix of rank at most $r$, in which case $\Pi_{\mathbb{S}^n_+}[\X]=\widehat{\Pi}^r_{\mathbb{S}^n_+}[\X]$, if and only if $\lambda_{r+1}(\X)\le0$. 
Therefore, when running \cref{alg:EG}, to ensure the rank-$r$ truncated projections are the correct projections, we merely need to check in each iteration whether
\begin{align} \label{cond:lowRankProjCondition}
& \lambda_{r+1}(\X_t-\eta\nabla_{\X}\mL(\X_t,\y_t))\le0\quad \textrm{and} \nonumber
\\ & \lambda_{r+1}(\X_t-\eta\nabla_{\X}\mL(\Z_{t+1},\w_{t+1}))\le0.
\end{align} 
This condition can easily be checked by computing the rank-$(r+1)$ SVDs instead of only the rank-$r$ SVD necessary for the rank-$r$ truncated projection itself. A similar condition was derived in \cite{ourExtragradient} for their spectrahedron-constrained setting.

%\textcolor{red}{This is not clear nor convincing enough --- need to be more rigorous, need to reference the expression for the Euclidean projection to clearly support this statement. Use the expressions in the Extragradient algorithm (the matrix to project) to state more clearly the procedure.
%Also reference a similar claim in our Extragradient paper}

\section{Augmented Lagrangian Method}
\label{sec:Augmented}
In practice, it has often been observed that solving the Augmented Lagrangian associated with Problem \eqref{mainProblemSDP} works better  \cite{AugmentedLagrangian1, AugmentedLagrangian2, AugmentedLagrangian3}. Problem \eqref{mainProblemSDP} can be written as an augmented Lagrangian for some parameter $\mu\ge0$ as follows:
\begin{align} \label{problem:augmented}
\min_{\X\succeq0} \max_{\y\in\reals^m}\hspace{-0.3mm} f(\X)\hspace{-0.7mm} + \hspace{-0.7mm} {\y}^{\top}(\b-\mathcal{A}(\X))\hspace{-0.7mm}+\hspace{-0.7mm}\frac{\mu}{2}\Vert\mA(\X)-\b\Vert_2^2.
\end{align}

A similar result to \cref{thm:SDP}, which follows as an immediate application of  \cref{thm:SDP}, can be obtained for this formulation as well. The formal statement of the theorem and its proof are given in \cref{appx:thmAugmented}.

\section{Nonsmooth Objective Functions}
\label{sec:nonsmoothReg}

An important extension to consider is the addition of  a nonsmooth function to the objective, i.e., the problem:
\begin{align} \label{problem:nonsmoothRegularizer}
\min f(\X)+g(\X) \quad \textrm{s.t.} \quad \X\succeq 0, ~\mathcal{A}(\X)=\b,
\end{align}
%\begin{align} \label{problem:nonsmoothRegularizer}
%\min_{\substack{\X\succeq0\\ \mathcal{A}(\X)=\b}}f(\X)+g(\X),
%\end{align}
where $g:\mathbb{S}^n\rightarrow\reals$ is convex but nonsmooth and can be written as the maximum of smooth functions over a compact and convex subset $\mK\subset\mathbb{Y}$ of some finite linear space over the reals $\mathbb{Y}$. That is, $g$ can be written as $g(\X)=\max_{\blambda\in\mK}G(\X,\blambda)$, where $G(\cdot,\blambda)$ is convex for all $\blambda\in\mK$ and $G(\X,\cdot)$ is concave for all $\X\succeq0$. We assume it is efficient to compute Euclidean projections onto $\mK$.

Examples include problems such as sparsity-promoting $\ell_1$-regularized least squares or robust $\ell_1$-regression. Indeed the $\ell_1$-norm can be written as $\Vert\X\Vert_1=\max_{\Vert\Y\Vert_{\infty}\le1}\langle\X,\Y\rangle$ which fits our structure.

Problem \eqref{problem:nonsmoothRegularizer} can be written as the saddle-point problem:
\begin{align} \label{problem:nonsmoothRegularizersSP}
\min_{\X\succeq0} \max_{\substack{\blambda\in\mK \\ \y\in\reals^m}}f(\X) +G(\X,\blambda)+\y^{\top}(\b-\mathcal{A}(\X)).
%& \min_{\substack{\X\succeq0\\ \mathcal{A}(\X)=\b}}\max_{\blambda\in\mK}f(\X)+G(\X,\blambda) \nonumber
%\\ &  = \min_{\X\succeq0} \max_{\substack{\blambda\in\mK \\ \y\in\reals^m}}f(\X) +G(\X,\blambda)+\y^{\top}(\b-\mathcal{A}(\X)).
\end{align}
We assume $G(\cdot,\cdot)$ is smooth with respect to all components. That is, we assume there exist constants $\rho_{X}$,$\rho_{\lambda}$,$\rho_{X\lambda}$,$\rho_{\lambda X}\ge0$ such that for all $\X,\widetilde{\X}\succeq0$ and all $\blambda,\widetilde{\blambda}\in\mK$ it holds that,
%\vspace{-3mm}
\begin{small}
\begin{align} \label{ineq:betasNonsmoothRegGpart}
& \Vert\nabla_{\X}G(\X,\blambda)-\nabla_{\X}G(\widetilde{\X},\blambda)\Vert_{F} \le\rho_{X}\Vert\X-\widetilde{\X}\Vert_{F}, \nonumber \\
&\Vert\nabla_{\blambda}G(\X,\blambda)-\nabla_{\blambda}G(\X,\widetilde{\blambda})\Vert_{2} \le\rho_{\lambda}\Vert\blambda-\widetilde{\blambda}\Vert_{2}, \nonumber \\
& \Vert\nabla_{\X}G(\X,\blambda)-\nabla_{\X}G(\X,\widetilde{\blambda})\Vert_{F} \le\rho_{X\lambda}\Vert\blambda-\widetilde{\blambda}\Vert_{2}, \nonumber \\
& \Vert\nabla_{\blambda}G(\X,\blambda)-\nabla_{\blambda}G(\widetilde{\X},\blambda)\Vert_{2} \le\rho_{\lambda X}\Vert\X-\widetilde{\X}\Vert_{F}.
\end{align}
\end{small}
%\vskip -8mm
%where $\nabla_{\X}G=\frac{\partial G}{\partial \X}$ and $\nabla_{\blambda}G=\frac{\partial G}{\partial \blambda}$.
We denote $\mL(\X,\y,\blambda):=f(\X) +G(\X,\blambda) + {\y}^{\top}(\b-\mathcal{A}(\X))$. The complementarity condition for this case is defined below and is consistent with \eqref{eq:compcond}, as both are related to the rank of $\nabla_{\X}\mL(\X^*,\y^*,\blambda^*)$.

\textbf{Optimality conditions:} Any primal-dual solution $(\X^*,(\y^*,\blambda^*))\in\mbS^n_+\times(\reals^m\times\mK)$  to Problem  \eqref{problem:nonsmoothRegularizersSP} satisfies:  $\mA(\X^*)=\b$,  $\nabla{}f(\X^*)-\mathcal{A}^{\top}(\y^*)+\nabla_{\X}G(\X^*,\blambda^*)\succeq0$, and $\trace\left({\left({\nabla{}f(\X^*)-\mathcal{A}^{\top}(\y^*)+\nabla_{\X}G(\X^*,\blambda^*)\succeq0}\right)\X^*}\right)=0$. The latter \textit{complementarity} equality implies there exists $r \geq r^* = \rank(\X^*)$ such that the following \textbf{complementarity condition} holds:
\begin{align}\label{eq:compcond:NS}
\hspace{-5pt}\rank(\nabla{}f(\X^*)\hspace{-1pt}-\hspace{-1pt}\mathcal{A}^{\top}(\y^*)\hspace{-1pt}+\hspace{-1pt}\nabla_{\X}G(\X^*,\blambda^*))\hspace{-1pt}=\hspace{-1pt}n-r.
\end{align}

Problem \eqref{problem:nonsmoothRegularizersSP} requires gradient ascent type updates for both dual variables $\y$ and $\blambda$. For clarity we present the projected extragradient method for solving  Problem \eqref{problem:nonsmoothRegularizersSP} in \cref{alg:EGregularized}, which is the same as \cref{alg:EG} with the additional updates for the $\blambda$ variable.   
\vspace{-3mm}
\begin{algorithm}[H]
	\caption{Extragradient method for Problem \eqref{problem:nonsmoothRegularizersSP}}\label{alg:EGregularized}
	\begin{algorithmic}
		\STATE \textbf{Input:} step-size $\eta\ge0$ 
		\STATE \textbf{Initialization:} $(\X_1,\y_1,\blambda_1)\in\mathbb{S}^n_+\times\reals^{m}\times\mK$
		\FOR{$t = 1,2,...$} 
            \STATE $\Z_{t+1}=\Pi_{\mbS^n_+}[\X_t-\eta\nabla_{\X}\mL(\X_t,\y_t,\blambda_t)]$   
			\STATE $\w_{t+1}=\y_t+\eta\nabla_{\y}\mL(\X_t,\y_t,\blambda_t)$
			\STATE $\bmu_{t+1} = \Pi_{\mK}[\bmu_t+\eta\nabla_{\blambda}\mL(\X_t,\y_t,\blambda_t)]$
			\STATE $\X_{t+1}=\Pi_{\mbS^n_+}[\X_t-\eta\nabla_{\X}\mL(\Z_{t+1},\w_{t+1},\bmu_{t+1})]$   
			\STATE $\y_{t+1}=\y_t+\eta\nabla_{\y}\mL(\Z_{t+1},\w_{t+1},\bmu_{t+1})$
			\STATE $\blambda_{t+1}=\Pi_{\mK}[\blambda_t+\eta\nabla_{\blambda}\mL(\Z_{t+1},\w_{t+1},\bmu_{t+1})]$
        \ENDFOR
	\end{algorithmic}
\end{algorithm}
\vskip -4mm
We now state our main result for this setting. The proof is given in \cref{appx:proofThm45}.
\begin{theorem} \label{thm:nonsmoothRegularizer}
Fix an optimal solution $\X^*\succeq0$ to Problem \eqref{problem:nonsmoothRegularizer}. Let $(\y^*,\blambda^*)\in\reals^m\times\mK$ be a corresponding dual solution and suppose $(\X^*,(\y^*,\blambda^*))$ satisfies the complementarity condition \eqref{eq:compcond:NS} with parameter  $\tilde{r}:=n-\rank(\nabla_{\X}\mL^*)$, where $\nabla_{\X}\mL^*:=\nabla_{\X}\mL(\X^*,\y^*,\blambda^*)$. % Assume the complementarity condition (\cref{def:GSCnonsmooth}) holds with rank $\tilde{r}:=n-\rank(\nabla_{\X}\mL^*)$. 
Let $\lbrace(\X_t,(\y_t,\blambda_t))\rbrace_{t\ge1}$ and $\lbrace(\Z_t,(\w_t,\bmu_t))\rbrace_{t\ge2}$ be the sequences of iterates generated by  \cref{alg:EGregularized} with a fixed step-size\footnote{For the $\mathcal{O}(1/T)$ result we may take a slightly larger step-size. See restatement of the theorem in \cref{appx:proofThm45}.}
\vspace{-2mm}
\begin{align*}
\eta \le \min\left\lbrace\hspace{-1mm}\begin{array}{l}\frac{1}{2\sqrt{(\beta+\rho_{X})^2+\Vert\mA\Vert^2+\rho_{\lambda X}^2}},
\\ \frac{1}{\sqrt{2}\sqrt{\rho_{\lambda}^2+2\max\lbrace \Vert\mA\Vert^2,\rho_{X\lambda}^2\rbrace}},
\\ \frac{1}{\beta+\rho_{X}+\sqrt{2}\max\lbrace \Vert\mA\Vert,\rho_{X\lambda}\rbrace},\frac{1}{\rho_{\lambda}+\sqrt{\Vert\mA\Vert^2+\rho_{\lambda X}}}\end{array}\hspace{-1mm}\right\rbrace.
\end{align*}
Fix some $r\ge\tilde{r}$ and assume the initialization $(\X_1,(\y_1,\blambda_1))$ satisfies that $\Vert(\X_1,\y_1,\blambda_1)-(\X^*,\y^*,\blambda^*)\Vert_F\le R_0(r)$ where
\vspace{-8pt}
\begin{align*}
& R_0(r):= 
\frac{\eta\max\left\lbrace\lambda_{n-r}(\nabla_{\X}\mL^*),\sqrt{\tilde{r}}\lambda_{n-r+\tilde{r}-1}(\nabla_{\X}\mL^*)\right\rbrace}{2(1+2\eta\max\lbrace\beta+\rho_{X},\Vert\mA\Vert,\rho_{X\lambda}\rbrace)}.
\end{align*}
Then, for all $t\ge1$ the projections $\Pi_{\mbS^n_+}[\cdot]$ in \cref{alg:EGregularized} could be replaced with rank-r truncated projections without changing the sequences $\lbrace(\X_t,(\y_t,\blambda_t))\rbrace_{t\ge1}$ and $\lbrace(\Z_t,(\w_t,\bmu_t))\rbrace_{t\ge2}$. Moreover, for all $T\ge0$ it holds that
%\textcolor{red}{
\begin{align*}
& f(\bar{\X})+g(\bar{\X})-f(\X^*)-g(\X^*) 
\le R^2/(2\eta T),
\\ & \Vert\mA(\bar{\X})-\b\Vert_2  \le R^2/(2\Vert\y^*\Vert_2\eta T),
\end{align*}
where $\bar{\X}=\frac{1}{T}\sum_{t=1}^T \Z_{t+1}$ and $R^2=\Vert\X_1-\X^*\Vert_F^2+\max\limits_{\y\in\lbrace\y\in\reals^m\ \vert\ \Vert\y\Vert_2\le2\Vert\y^*\Vert_2\rbrace}\Vert\y_1-\y\Vert_2^2+\max\limits_{\blambda\in\mK}\Vert\blambda_1-\blambda\Vert_2^2$.

Furthermore, 
%if we take a step-size \textcolor{red}{(there is no point to reiterate the step-size here because of such a minor difference, ($\sqrt{2}$ factor), let us simply state the smaller one (we can perhaps have a footnote saying that for the $1/T$ result we may take a slightly larger step-size and refer to the theorem in the appendix))}
%\begin{align*}
%\eta = \min\left\lbrace\hspace{-1mm}\begin{array}{l}\frac{1}{2\sqrt{(\beta+\rho_{X})^2+\Vert\mA\Vert^2+\rho_{\lambda X}^2}},
%\\ \frac{1}{2\sqrt{\rho_{\lambda}^2+2\max\lbrace \Vert\mA\Vert^2,\rho_{X\lambda}^2\rbrace}},
%\\ \frac{1}{\beta+\rho_{X}+\sqrt{2}\max\lbrace \Vert\mA\Vert,\rho_{X\lambda}\rbrace},\frac{1}{\rho_{\lambda}+\sqrt{\Vert\mA\Vert^2+\rho_{\lambda X}}}\end{array}\hspace{-1mm}\right\rbrace,
%\end{align*}
%then 
for all $T\ge0$ it holds that
\begin{align*}
& f(\Z_{T+1})+g(\Z_{T+1})-f(\X^*)-g(\X^*)\le R_1/(\eta\sqrt{T}),
\\ & \Vert\mA(\Z_{T+1})-\b\Vert_2\le R_1/(\Vert\y^*\Vert_2\eta\sqrt{T}),
\end{align*}
where $R_1:=3\sqrt{2} \max\lbrace 2R_0(r),3\sqrt{2}\Vert\y^*\Vert_2,\sqrt{2}\diam(\mK)\rbrace
\cdot\Vert(\X_1,\y_1,\blambda_1)-(\X^*,\y^*,\blambda^*)\Vert$.
\end{theorem}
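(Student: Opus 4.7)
}
The plan is to mirror the three-step structure of the proof of \cref{thm:SDP}, but applied to the joint primal-dual operator over the product space $\mbS^n\times\reals^m\times\mK$. First I would generalize \cref{lemma:radius} to the present setting by showing that if $(\X,(\y,\blambda))$ and $(\Z,(\w,\bmu))$ lie within the ball of radius $R_0(r)$ around $(\X^*,(\y^*,\blambda^*))$, then the $(r{+}1)$-th eigenvalue of $\X-\eta\nabla_{\X}\mL(\Z,\w,\bmu)$ is nonpositive, so that $\Pi_{\mbS^n_+}[\X-\eta\nabla_{\X}\mL(\Z,\w,\bmu)]=\widehat{\Pi}^r_{\mbS^n_+}[\X-\eta\nabla_{\X}\mL(\Z,\w,\bmu)]$. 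The only change from \cref{lemma:radius} is that $\nabla_{\X}\mL$ now contains the term $\nabla_{\X}G(\cdot,\blambda)$, so bounding the perturbation $\|\nabla_{\X}\mL(\Z,\w,\bmu)-\nabla_{\X}\mL(\X^*,\y^*,\blambda^*)\|_F$ requires the three smoothness constants $\beta$, $\|\mA\|$, and $\rho_{X\lambda}$; this is exactly why $\max\{\beta+\rho_X,\|\mA\|,\rho_{X\lambda}\}$ appears in the denominator of $R_0(r)$ instead of just $\max\{\beta,\|\mA\|\}$. The Weyl-type eigenvalue perturbation argument together with the complementarity condition \eqref{eq:compcond:NS} then yields the rank-$r$ guarantee.

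Next I would establish the standard nonexpansiveness property of the extragradient method for the joint monotone operator $F(\X,\y,\blambda):=(\nabla_{\X}\mL,-\nabla_{\y}\mL,-\nabla_{\blambda}\mL)$. Under Assumption on the smoothness of $f$ and the cross-Lipschitz bounds \eqref{ineq:betasNonsmoothRegGpart}, $F$ is Lipschitz on $\mbS^n\times\reals^m\times\mK$ with a constant that is dominated by the step-size thresholds listed in the theorem; the first bound on $\eta$ is precisely what makes the joint operator $\eta$-contractive in the sense required by the classical extragradient analysis (e.g., the Nemirovski/Korpelevich bound $1-\eta^2 L^2\ge 0$). Combining this with the standard identity for the extragradient iterates gives $\|(\X_{t+1},\y_{t+1},\blambda_{t+1})-(\X^*,\y^*,\blambda^*)\|\le\|(\X_t,\y_t,\blambda_t)-(\X^*,\y^*,\blambda^*)\|$, so once the initialization lies in the ball of radius $R_0(r)$ every subsequent iterate $(\X_t,(\y_t,\blambda_t))$ and $(\Z_t,(\w_t,\bmu_t))$ remains in that ball; this justifies replacing each $\Pi_{\mbS^n_+}[\cdot]$ in \cref{alg:EGregularized} with $\widehat{\Pi}^r_{\mbS^n_+}[\cdot]$ for all $t\ge 1$.

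For the $\mathcal{O}(1/T)$ average-iterate bound I would run the standard telescoping argument: for any comparator $(\X,\y,\blambda)\in\mbS^n_+\times\reals^m\times\mK$,
\begin{align*}
\sum_{t=1}^T\!\bigl[\mL(\Z_{t+1},\y,\blambda)-\mL(\X,\w_{t+1},\bmu_{t+1})\bigr]\le\tfrac{1}{2\eta}\bigl(\|\X_1-\X\|_F^2+\|\y_1-\y\|_2^2+\|\blambda_1-\blambda\|_2^2\bigr),
\end{align*}
use convexity-concavity of $\mL$ to pull the sum inside, and then restrict the maximization over $\y$ to $\{\y:\|\y\|_2\le 2\|\y^*\|_2\}$ and over $\blambda$ to $\mK$. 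Combining with the natural extension of \cref{lemma:boundf_and_norm} (which carries over essentially unchanged because the additional term $G(\X,\blambda)$ is handled by convexity of $g$ and the fact that $g(\bar{\X})=\max_{\blambda\in\mK}G(\bar{\X},\blambda)$) yields both the objective bound $f(\bar{\X})+g(\bar{\X})-f(\X^*)-g(\X^*)\le R^2/(2\eta T)$ and the feasibility bound on $\|\mA(\bar{\X})-\b\|_2$.

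The $\mathcal{O}(1/\sqrt{T})$ last-iterate bound comes from invoking the last-iterate convergence result for the monotone extragradient method of \cite{lastIterateDecrease}, applied to $F$ on $\mbS^n_+\times\reals^m\times\mK$; once the iterate boundedness from the nonexpansiveness step is in place, the constant $R_1$ follows by tracking how the initial distance from $(\X^*,\y^*,\blambda^*)$ and the diameter $\diam(\mK)$ enter the last-iterate bound. I expect the main obstacle to be bookkeeping the three coupling constants in the step-size conditions so that simultaneously (i) the local rank-$r$ projection lemma applies with the stated $R_0(r)$, (ii) the joint operator is sufficiently contractive for iterate boundedness, and (iii) the last-iterate potential inequality from \cite{lastIterateDecrease} holds; conceptually there is nothing new beyond \cref{thm:SDP}, but the correct constants in $R_0(r)$, $\eta$, and $R_1$ require carefully combining the four Lipschitz constants in \eqref{ineq:betasNonsmoothRegGpart} with $\beta$ and $\|\mA\|$.
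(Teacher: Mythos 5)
Your plan follows essentially the same route as the paper's proof: recompute the smoothness constants of $\mL$ over $\mbS^n\times\reals^m\times\mK$ and redo the perturbation bound in \cref{lemma:radius} with $\max\{\beta+\rho_X,\Vert\mA\Vert,\rho_{X\lambda}\}$, use the extragradient nonexpansiveness to keep all iterates near the saddle point so truncated projections coincide with exact ones, obtain the $\mathcal{O}(1/T)$ rate by telescoping plus the concavity of $G(\X^*,\cdot)$ and the extension of \cref{lemma:boundf_and_norm}, and get the $\mathcal{O}(1/\sqrt{T})$ last-iterate rate from Theorem 6 of \cite{lastIterateDecrease}. The one bookkeeping slip is your claim that the intermediate iterates $(\Z_t,\w_t,\bmu_t)$ stay within radius $R_0(r)$: nonexpansiveness only controls $(\X_t,\y_t,\blambda_t)$, while the $\Z$-points incur an extra factor $(1+\eta\beta_{\mL})\le 2$, which is precisely why $R_0(r)$ carries the additional factor $2$ in its denominator relative to the radius in the projection lemma, so the argument closes once you apply that lemma with radius $2R_0(r)$.
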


%\begin{remark}
%As discussed in \cref{remark:SDPwidehatX}, we do not know the ball $\lbrace\y\in\reals^m\ \vert\ \Vert\y\Vert_2\le2\Vert\y^*\Vert_2\rbrace$, and thus, cannot directly compute $\widehat{\X}$. Nevertheless, we can empirically choose  $\widehat{\X}\in\lbrace\Z_t\rbrace_{t=2}^{T+1}$ which will provide a satisfactory balance between $f(\widehat{\X})+g(\widehat{\X})$  and $\Vert\mA(\widehat{\X})-\b\Vert_2$, as \cref{thm:nonsmoothRegularizer} guarantees there exists such an iterate.
%\end{remark}

\section{Optimization with Linear Equality and Convex Inequality Constraints}
\label{sec:nonlinearInequalities}

We now consider the case of smooth and convex inequality constraints, in addition to  linear equalities. This problem can be written in the following saddle-point formulation:
\begin{align} \label{problem:ineqConstrained}
& \min f(\X)\quad \textrm{s.t.} \quad \X\succeq 0, ~\mathcal{A}(\X) = \b, ~ g(\X) \leq 0 \nonumber
\\ & = \min_{\X\succeq0} \max_{\substack{\boldsymbol{\lambda}\in\reals_+^d \\ \boldsymbol{\y}\in\reals^m}}f(\X) + \boldsymbol{\lambda}^{\top}g(\X)+\y^{\top}(\b-\mathcal{A}(\X)),
%& \min_{\substack{\X\succeq0\\ g(\X)\le0 \\ \mathcal{A}(\X)=\b}}f(\X) \nonumber
%\\ & = \min_{\X\succeq0} \max_{\substack{\boldsymbol{\lambda}\in\reals_+^d \\ \boldsymbol{\y}\in\reals^m}}f(\X) + \boldsymbol{\lambda}^{\top}g(\X)+\y^{\top}(\b-\mathcal{A}(\X)),
\end{align}
where $f:\mathbb{S}^n\rightarrow\reals$ is convex and $\beta$-smooth, $g=(g_1,\ldots,g_d)^{\top}$ such that for all $i\in\lbrace 1,\ldots,d\rbrace$, $g_i:\mathbb{S}^n\rightarrow\reals$ is convex and $\beta_g$-smooth.

For this problem Slater's condition is slightly different than \cref{Ass:slater} and can be written as:
\begin{assumption}[Slater condition] \label{Ass:slater_inequality}
There exists $\bar{\X}\in\mathbb{S}^n_{++}$\footnote{If there are only inequality constraints it is sufficient for the Slater condition to hold for some $\bar{\X}\in\mathbb{S}^n_{+}$.}
such that $g_i(\bar{\X})<0$ for all $i\in\lbrace1,\ldots, d\rbrace$, and $\mathcal{A}(\bar{\X})=\b$.
\end{assumption}

Denote $\mL(\X,\y,\blambda):=f(\X) + \blambda^{\top}g(\X)+\y^{\top}(\b-\mA(\X))$. We now define complementarity for  Problem \eqref{problem:ineqConstrained}. Note  this definition is also consistent with \cref{eq:compcond} as it also involves the rank of $\nabla_{\X}\mL(\X^*,\y^*,\blambda^*)$.

\textbf{Optimality conditions:} Any primal-dual solution $(\X^*,(\y^*,\blambda^*))\in\mbS^n_+\times(\reals^m\times\reals^d_+)$  to Problem  \eqref{problem:ineqConstrained} satisfies:  $\mA(\X^*)=\b$, $g(\X^*)\leq 0$,  $\nabla{}f(\X^*)-\mathcal{A}^{\top}(\y^*)+\sum_{i=1}^{d}\boldsymbol{\lambda}_i^*\nabla{}g_i(\X^*)\succeq0$, $\blambda^*_ig_i(\X^*)=0$ for all $i\in[d]$, and $\trace\left({\left({\nabla{}f(\X^*)-\mathcal{A}^{\top}(\y^*)+\sum_{i=1}^{d}\boldsymbol{\lambda}_i^*\nabla{}g_i(\X^*)}\right)\X^*}\right)=0$. The latter \textit{complementarity} equality implies there exists $r \geq r^* = \rank(\X^*)$ such that the  following \textbf{complementarity condition} holds:
\vspace{-6pt}
\begin{align}\label{eq:compcond:const}
\hspace{-6pt}\rank(\nabla{}f(\X^*)\hspace{-1pt}-\hspace{-1pt}\mathcal{A}^{\top}(\y^*)\hspace{-1pt}+\hspace{-1pt}\sum_{i=1}^{d}\boldsymbol{\lambda}_i^*\nabla{}g_i(\X^*))\hspace{-1pt}=\hspace{-1pt}n\hspace{-1pt}-\hspace{-1pt}r.
\end{align}

The projected extragradient method for Problem \eqref{problem:ineqConstrained} is as in \cref{alg:EGregularized}, but when replacing the set $\mK$ with $\reals^d_+$. 
We now present our main result for this setting. The proof is given in \cref{appx:proofThm56}. 
\begin{theorem}\label{thm:inequlalityConstraints}
Fix an optimal solution $\X^*\succeq0$ to Problem \eqref{problem:ineqConstrained}. Let $(\y^*,\blambda^*)\in\reals^m\times\reals^d_+$ be a corresponding dual solution and suppose $(\X^*,(\y^*,\blambda^*))$ satisfies the complementarity condition \eqref{eq:compcond:const} with rank $\tilde{r}:=n-\rank(\nabla_{\X}\mL^*)$, where $\nabla_{\X}\mL^*:=\nabla_{\X}\mL(\X^*,\y^*,\blambda^*)$.  
%Assume the complementarity condition (\cref{def:GSCinequalityConstrained}) holds with rank $\tilde{r}:=n-\rank(\nabla_{\X}\mL^*)$. 
Fix some $r\ge\tilde{r}$ and denote
$\delta(r):=
 \max\left\lbrace\lambda_{n-r}(\nabla_{\X}\mL^*),\sqrt{\tilde{r}}\lambda_{n-r+\tilde{r}-1}(\nabla_{\X}\mL^*)\right\rbrace$. Denote $\mD_X=\lbrace\X\succeq0~\vert~\Vert\X-\X^*\Vert_F\le2\delta(r)\rbrace$ and $\mD_{\lambda}=\lbrace\blambda\in\reals^d_+~\vert~\Vert\blambda-\blambda^*\Vert_2\le2\delta(r)\rbrace$.
Let $\lbrace(\X_t,(\y_t,\blambda_t))\rbrace_{t\ge1}$ and $\lbrace(\Z_t,(\w_t,\bmu_t))\rbrace_{t\ge2}$ be the sequences of iterates generated by \cref{alg:EGregularized} (with $\mK=\reals^d_+$) with a fixed step-size\footnote{For the sake of the $\mathcal{O}(1/T)$ rate result we may take a slightly larger step-size. See restatement of the theorem in \cref{appx:proofThm56}.}
%\vspace{-10pt}
\begin{align*}
& \eta \hspace{-0.2mm}\le \hspace{-0.2mm}
\min\hspace{-0.6mm}\left\lbrace\hspace{-2.5mm}\begin{array}{l}\frac{1}{2\sqrt{(\beta+\max_{\blambda\in\mD_{\lambda}}\Vert\blambda\Vert_1\beta_g)^2+\Vert\mA\Vert^2+\beta_g^2}},
\\ \frac{1}{2\max\lbrace \Vert\mA\Vert,\sqrt{d}\max_{\X\in\mD_X}\max_{i\in[d]}\Vert\nabla{}g_i(\X)\Vert_F\rbrace},
\\ \frac{1}{\beta+\max\limits_{\blambda\in\mD_{\lambda}}\hspace{-1mm}\Vert\blambda\Vert_1\beta_g+\sqrt{2}\max\lbrace \Vert\mA\Vert,\max\limits_{\substack{\X\in\mD_X \\ i\in[d]}}\hspace{-1mm}\Vert\nabla{}g_i(\X)\Vert_F\rbrace}
%,\frac{1}{\sqrt{\Vert\mA\Vert^2+\beta_g^2}}
\end{array}\hspace{-3mm}\right\rbrace.
\end{align*}
Assume the initialization $(\X_1,(\y_1,\blambda_1))$ satisfies  $\Vert(\X_1,\y_1,\blambda_1)-(\X^*,\y^*,\blambda^*)\Vert_F\le R_0(r)$ where
{\small
\begin{align*}
\hspace{-2pt} R_0(r) \hspace{-1pt}=\hspace{-1pt} \delta(r)\min\hspace{-2pt}\left\lbrace\vcenter{\hspace{-2pt}\hbox{$\displaystyle
1,\begin{small}
\frac{(1/2)\eta}{1\hspace{-0.5mm}+\hspace{-0.5mm}2\eta\max\left\lbrace
\hspace{-2mm}\begin{array}{l}\beta\hspace{-0.5mm}+\hspace{-0.5mm}\max\limits_{\blambda\in\mD_{\lambda}}\hspace{-0.5mm}\Vert\blambda\Vert_1\beta_g,\Vert\mA\Vert,\\ \sqrt{d}\max\limits_{\substack{\X\in\mathcal{D}_X \\ i\in[d]}}\hspace{-0.5mm}\Vert\nabla{}g_i(\X)\Vert_F\end{array}
\hspace{-2mm}\right\rbrace}
\end{small}
$}}\hspace{-29mm}\right\rbrace.
\end{align*}}
%\begin{align*}
%& R_0(r):= \delta(r)
%\\ & \cdot\min\left\lbrace\vcenter{\hbox{$\displaystyle
%1,\begin{small}
%\frac{(1/2)\eta}{1+2\eta\max\left\lbrace
%\hspace{-2mm}\begin{array}{l}\beta+\max\limits_{\blambda\in\mD_{\lambda}}\Vert\blambda\Vert_1\beta_g,\Vert\mA\Vert,\\ \sqrt{d}\max\limits_{\substack{\X\in\mathcal{D}_X \\ i\in[d]}}\Vert\nabla{}g_i(\X)\Vert_F\end{array}
%\hspace{-2mm}\right\rbrace}
%\end{small}
%$}}\right\rbrace.
%\end{align*}

Then, for all $t\ge1$, the projections $\Pi_{\mbS^n_+}[\cdot]$ in \cref{alg:EGregularized} could be replaced with rank-r truncated projections without changing the sequences $\lbrace(\X_t,(\y_t,\blambda_t))\rbrace_{t\ge1}$ and $\lbrace(\Z_t,(\w_t,\bmu_t))\rbrace_{t\ge2}$. Moreover, for all $T\ge0$ it holds that
%\textcolor{red}{
\begin{align*}
 f(\bar{\X})-f(\X^*) & \le R^2/(2\eta T),
\\   \Vert\mA(\bar{\X})-\b\Vert_2
& \le R^2/(2\Vert\y^*\Vert_2\eta T),
\\ \sum_{i\in\lbrace i\in[d]~\vert~g_i(\bar{\X})>0\rbrace}g_i(\bar{\X}) & \le R^2/(2\eta T),
\end{align*}
where $\bar{\X}=\frac{1}{T}\sum_{t=1}^T \Z_{t+1}$ and $R^2:=(\Vert\X_1-\X^*\Vert_F^2+\max\limits_{\y\in\mC_y}\Vert\y_1-\y\Vert_2^2+\max\limits_{\blambda\in\mC_{\lambda}}\Vert\blambda_1-\blambda\Vert_2^2)$ such that $\mC_y:=\lbrace\y\in\reals^m\ \vert\ \Vert\y\Vert_2\le2\Vert\y^*\Vert_2\rbrace$ and $\mC_{\lambda}:=\lbrace\blambda\in\reals^d_+\ \vert\ \Vert\blambda\Vert_2\le\Vert\blambda^*\Vert_2+\sqrt{d}\rbrace$.

Furthermore, 
%if we take a step-size \textcolor{red}{(there is no point to reiterate the step-size here because of such a minor difference, ($\sqrt{2}$ factor), let us simply state the smaller one (we can perhaps have a footnote saying that for the $1/T$ result we may take a slightly larger step-size and refer to the theorem in the appendix))}
%\begin{align*}
%& \eta \hspace{-0.2mm}= \hspace{-0.2mm}
%\min\hspace{-0.6mm}\left\lbrace\hspace{-2.5mm}\begin{array}{l}\frac{1}{2\sqrt{(\beta+\max_{\blambda\in\mD_{\lambda}}\Vert\blambda\Vert_1\beta_g)^2+\Vert\mA\Vert^2+\beta_g^2}},
%\\ \frac{1}{2\max\lbrace \Vert\mA\Vert,\sqrt{d}\max_{\X\in\mD_X}\max_{i\in[d]}\Vert\nabla{}g_i(\X)\Vert_F\rbrace},
%\\ \frac{1}{\beta+\max\limits_{\blambda\in\mD_{\lambda}}\hspace{-1mm}\Vert\blambda\Vert_1\beta_g+\sqrt{2}\max\lbrace \Vert\mA\Vert,\max\limits_{\substack{\X\in\mD_X \\ i\in[d]}}\hspace{-1mm}\Vert\nabla{}g_i(\X)\Vert_F\rbrace}
%%,\frac{1}{\sqrt{\Vert\mA\Vert^2+\beta_g^2}}
%\end{array}\hspace{-3mm}\right\rbrace,
%\end{align*}
%then 
for all $T\ge0$ it holds that
\begin{align*}
 f(\Z_{T+1})-f(\X^*) & \le R_1/(\eta\sqrt{T}),
\\ \Vert\mA(\Z_{T+1})-\b\Vert_2 & \le R_1/(\Vert\y^*\Vert_2\eta\sqrt{T})
\\ \sum_{i\in\lbrace i\in[d]~\vert~g_i(\Z_{T+1})>0\rbrace}g_i(\Z_{T+1}) & \le R_1/(\eta \sqrt{T}),
\end{align*}
where $R_1:=3\sqrt{2} \max\lbrace 2R_0(r),3\sqrt{2}\Vert\y^*\Vert_2,\sqrt{2}(2\Vert\blambda^*\Vert+\sqrt{d})\rbrace
\Vert(\X_1,\y_1,\blambda_1)-(\X^*,\y^*,\blambda^*)\Vert$.
%\begin{align*}
%R_1:&=3\sqrt{2} \max\lbrace 2R_0(r),3\sqrt{2}\Vert\y^*\Vert_2,\sqrt{2}(2\Vert\blambda^*\Vert+\sqrt{d})\rbrace
%\\ & \ \ \ \cdot\Vert(\X_1,\y_1,\blambda_1)-(\X^*,\y^*,\blambda^*)\Vert.
%\end{align*}
\end{theorem}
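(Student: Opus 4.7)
The plan is to adapt the proof strategies of \cref{thm:SDP} and \cref{thm:nonsmoothRegularizer} to this setting. The added technical issue is that $\nabla_\X\mL(\X,\y,\blambda) = \nabla f(\X) - \mA^\top\y + \sum_{i=1}^d \blambda_i \nabla g_i(\X)$ has Lipschitz constants depending on $\Vert\blambda\Vert_1$ and on $\max_i\Vert\nabla g_i(\X)\Vert_F$, neither of which is uniformly bounded on $\mbS^n_+ \times \reals^d_+$. The sets $\mD_X$ and $\mD_\lambda$ are introduced precisely to localize these constants, which is why they enter the step-size bound.

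I would first establish the analogue of \cref{lemma:radius} for this setting. For any $(\X,\y,\blambda)$ and $(\Z,\w,\bmu) \in \mbS^n_+ \times \reals^m \times \reals^d_+$ with $\Z \in \mD_X$ and $\bmu \in \mD_\lambda$, the triangle inequality and Cauchy--Schwarz over the sum of constraints give
\begin{align*}
\Vert\nabla_\X\mL(\Z,\w,\bmu) - \nabla_\X\mL^*\Vert_F
&\le (\beta + \Vert\bmu\Vert_1\beta_g)\Vert\Z - \X^*\Vert_F + \Vert\mA\Vert\Vert\w - \y^*\Vert_2 \\
&\quad + \sqrt{d}\max_{i \in [d]}\Vert\nabla g_i(\X^*)\Vert_F\,\Vert\bmu - \blambda^*\Vert_2.
\end{align*}
Substituting this into the Weyl-inequality argument of \cref{lemma:radius} --- that $\lambda_{r+1}(\X - \eta\nabla_\X\mL(\Z,\w,\bmu)) \le 0$ whenever the perturbation from $(\X^*,\y^*,\blambda^*)$ is small relative to $\delta(r)$ --- and bounding $\Vert\bmu\Vert_1$ uniformly by $\max_{\blambda \in \mD_\lambda}\Vert\blambda\Vert_1$, one recovers precisely the radius condition appearing in $R_0(r)$.

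Next I would prove by induction on $t$ that (i) $\Vert(\X_t,\y_t,\blambda_t) - (\X^*,\y^*,\blambda^*)\Vert \le R_0(r)$; (ii) both $(\X_t,\y_t,\blambda_t)$ and $(\Z_{t+1},\w_{t+1},\bmu_{t+1})$ lie in $\mD_X \times \mC_y \times \mD_\lambda$; and hence (iii) the rank-$r$ truncated projections coincide with the full projections. Step (i) uses the standard Extragradient nonexpansivity: the joint operator $F(\X,\y,\blambda) = (\nabla_\X\mL, -(\b-\mA(\X)), -g(\X))$ is monotone and $L$-Lipschitz on $\mD_X \times \reals^m \times \mD_\lambda$, with $L$ matching the first term in the step-size bound; thus $\eta L \le 1$ gives the usual non-increasing property $\Vert(\X_{t+1},\y_{t+1},\blambda_{t+1}) - (\X^*,\y^*,\blambda^*)\Vert \le \Vert(\X_t,\y_t,\blambda_t) - (\X^*,\y^*,\blambda^*)\Vert$. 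Step (ii) uses that the one-step Extragradient displacement is bounded by $\eta\Vert F\Vert$; the factor $1/2$ in $R_0(r)$ leaves enough slack so that $(\Z_{t+1},\w_{t+1},\bmu_{t+1})$ also lies in $\mD_X \times \mD_\lambda$, which have radius $2\delta(r)$. Given (i)--(iii), the standard Extragradient gap analysis applied with test set $\{\X^*\} \times \mC_y \times \mC_\lambda$ yields, for every $(\y,\blambda) \in \mC_y \times \mC_\lambda$,
\begin{align*}
\mL(\bar\X,\y,\blambda) - \mL(\X^*,\bar\y,\bar\blambda) \le R^2/(2\eta T).
\end{align*}

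The three claimed bounds then follow from an extension of \cref{lemma:boundf_and_norm}. Since $\bar\blambda \ge 0$, $g(\X^*) \le 0$ and $\mA(\X^*)=\b$, we have $\mL(\X^*,\bar\y,\bar\blambda) \le f(\X^*)$, giving $\mL(\bar\X,\y,\blambda) - f(\X^*) \le R^2/(2\eta T)$. Taking $(\y,\blambda)=(0,0) \in \mC_y \times \mC_\lambda$, and combining with the convex-duality lower bound $f(\bar\X) - f(\X^*) \ge -\Vert\y^*\Vert_2\Vert\mA(\bar\X)-\b\Vert_2 - \Vert\blambda^*\Vert_\infty\sum_{i: g_i(\bar\X)>0}g_i(\bar\X)$, gives the objective bound; taking $\y$ to maximize $\y^\top(\b-\mA(\bar\X))$ over $\mC_y$ yields the equality-violation bound; and taking $\blambda = \blambda^* + \mathbf{e}$ with $\mathbf{e}_i = 1$ iff $g_i(\bar\X) > 0$ (so $\Vert\mathbf{e}\Vert_2 \le \sqrt{d}$ and $\blambda \in \mC_\lambda$), together with complementary slackness $\blambda^{*\top} g(\X^*) = 0$, yields the inequality-violation bound. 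The $\mathcal{O}(1/\sqrt{T})$ last-iterate rate follows from \cite{lastIterateDecrease} exactly as in \cref{thm:SDP}. The main obstacle is the circular dependence between the step-size condition and invariant (ii): the step-size is defined via Lipschitz constants over $\mD_X \times \mD_\lambda$, but those constants can only be invoked once iterates are known to stay in these sets. The choice $R_0(r) \le \delta(r)$, together with $\mD_X, \mD_\lambda$ having radius $2\delta(r)$, resolves this by leaving enough room for the Extragradient displacements to remain inside even when they momentarily leave the ball of radius $R_0(r)$.
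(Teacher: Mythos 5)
Your proposal follows essentially the same route as the paper's proof: the localized radius lemma over $\mD_X\times\reals^m\times\mD_{\lambda}$ with the $\Vert\bmu\Vert_1$- and $\sqrt{d}\max_i\Vert\nabla g_i\Vert_F$-dependent smoothness bound, the induction keeping $(\X_t,\y_t,\blambda_t)$ within $R_0(r)$ and the $(\Z_{t+1},\w_{t+1},\bmu_{t+1})$ within $2R_0(r)\le 2\delta(r)$ (resolving the step-size/invariant circularity exactly as you describe), the averaged gap bound over the test set $\mC_y\times\mC_{\lambda}$, the conversion lemma with the perturbed dual $\tilde{\blambda}=\blambda^*+\mathbf{e}$, and the last-iterate rate via \cite{lastIterateDecrease}. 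One small correction to your conversion step: for the equality-violation bound take the comparator $\blambda=\blambda^*$ (as the paper's analogue of \cref{lemma:boundf_and_norm} does), so that the ${\blambda^*}^{\top}g(\bar{\X})$ term cancels against the saddle-point inequality $f(\X^*)+\bar{\blambda}^{\top}g(\X^*)\le f(\bar{\X})+{\y^*}^{\top}(\b-\mA(\bar{\X}))+{\blambda^*}^{\top}g(\bar{\X})$; with $\blambda=0$ and your stated lower bound, the leftover term $\Vert\blambda^*\Vert_{\infty}\sum_{i:\,g_i(\bar{\X})>0}g_i(\bar{\X})$ inflates the feasibility bound to roughly $(1+\Vert\blambda^*\Vert_{\infty})R^2/(2\Vert\y^*\Vert_2\eta T)$ rather than the stated $R^2/(2\Vert\y^*\Vert_2\eta T)$. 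Also, membership of the iterates in $\mC_y$ is neither needed nor guaranteed; only closeness to $\y^*$ together with membership in $\mD_X$ and $\mD_{\lambda}$ is used for the localized Lipschitz constants.
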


%\textcolor{red}{We are missing a discussion here on whether we can actually implement the algorithm in this setting? what are the non-standard parameters that need to be estimated?}
\begin{remark}
In practice, it may not be possible to estimate the parameters $\max_{\X\in\mD_X}\max_{i\in[d]}\Vert\nabla{}g_i(\X)\Vert_F$ and $\max_{\blambda\in\mD_{\lambda}}\Vert\blambda\Vert_1$ in the choice for the step-size $\eta$. Nevertheless, \cref{thm:inequlalityConstraints} at least demonstrates the existence of a fixed step-size for which our results hold. 
%In addition, similarly to the other cases, since we cannot explicitly compute $\widehat{\X}$ due to the unknown balls with respect to $y$ and $\blambda$, $\widehat{\X}$ can be chosen empirically by choosing $\widehat{\X}\in\lbrace\Z_{t}\rbrace_{t=1}^{T+1}$ such that there is an acceptable balance between the measures of $f(\widehat{\X})-f(\X^*)$, $\Vert\mA(\widehat{\X})-\b\Vert_2$, and $\sum_{i\in\lbrace i\in[d]~\vert~g_i(\widehat{\X})>0\rbrace}g_i(\widehat{\X})$.
\end{remark}

\begin{remark}
Similarly to the setting in \cref{sec:SDP}, there exist explicit bounds on the dual  solutions $\y^*$ and $\blambda^*$, see \cref{appx:boundsOptimalDualSolutions}.
\end{remark}

\section{Empirical Evidence}
\label{sec:experiments}
\begin{table*}[t]
\caption{Numerical results for the Max-Cut problem.}
\label{table:maxCut}
\vskip 0.15in
\begin{center}
\begin{small}
\begin{sc}
\begin{tabular}{lccccccccc}
\toprule
  &  &  &  &  & \multicolumn{5}{c}{1st iter. from which rank of projections $\leq r$} \\
\cmidrule(lr){6-10}
   \multirowcell{-2}{graph\\ \ }  & \multirowcell{-2}{$r^*=$\\ $\rank(\X^*)$} & \multirowcell{-2}{strict\\ compl.} & \multirowcell{-2}{$\langle\C,\widehat{\X}-\X^*\rangle$ \\$/\vert\langle\C,\X^*\rangle\vert$} & \multirowcell{-2}{$\Vert\mA(\widehat{\X})-\b\Vert_2$\\ \ } & $r=r^*$   & $r=2r^*$ & $r=4r^*$ & $r=8r^*$ & $r=12r^*$ \\
\midrule
  \textbf{G1} & $13$ & $0.0188$ & $-1.8\times{10}^{-9}$ & $6.7\times{10}^{-8}$ & $120$ & $20$ & $15$ & $3$ & $3$ \\ 
  \textbf{G2} & $13$ & $0.0223$ & $-2.3\times{10}^{-9}$ & $2.6\times{10}^{-8}$ & $114$ & $20$ & $4$ & $3$ & $3$ \\ 
  \textbf{G3} & $14$ & $0.0388$ & $-1.4\times{10}^{-9}$ & $1.1\times{10}^{-7}$  & $161$ & $18$ & $3$ & $3$ & $3$ \\ 
    \textbf{G4} & $14$ & $0.0913$ & $-1.2\times{10}^{-8}$ & $1.1\times{10}^{-7}$  & $82$ & $19$ & $3$ & $3$ & $3$ \\ 
    \textbf{G5} & $12$ & $0.0274$ & $-3.0\times{10}^{-9}$ & $2.4\times{10}^{-8}$ & $166$ & $23$ & $15$ & $3$ & $3$ \\
 \textbf{G6} & $13$ & $0.0232$ & $-4.2\times{10}^{-9}$ & $5.3\times{10}^{-8}$ & $114$ & $15$ & $7$ & $2$ & $2$ \\ 
   \textbf{G7} & $12$ & $0.0011$ & $-2.7\times{10}^{-8}$ & $5.4\times{10}^{-8}$ & $269$ & $16$ & $7$ & $2$ & $2$ \\ 
 \textbf{G8} & $12$ & $0.0568$ & $-5.2\times{10}^{-9}$ & $8.6\times{10}^{-9}$ & $105$ & $16$ & $7$ & $2$ & $2$ \\ 
  \textbf{G9} & $12$ & $0.0453$ & $-4.0\times{10}^{-9}$ & $5.0\times{10}^{-7}$ & $126$ & $17$ & $7$ & $2$ & $2$ \\ 
   \textbf{G10} & $12$ & $0.0274$ & $-3.3\times{10}^{-9}$ & $2.4\times{10}^{-8}$ & $108$ & $16$ & $7$ & $2$ & $2$ \\ 
  \textbf{G11} & $6$ & $2.0\times{10}^{-5}$ & $1.6\times{10}^{-4}$ & $5.7\times{10}^{-6}$ & $-$ & $-$ & $942$ & $55$ & $21$  \\ 
  \textbf{G12} & $8$ & $1.7\times{10}^{-4}$ & $2.8\times{10}^{-4}$ & $1.7\times{10}^{-5}$  & $-$ & $1960$ & $135$ & $24$ & $4$ \\ 
  \textbf{G13} & $8$ & $0.0014$ & $2.5\times{10}^{-4}$ & $2.0\times{10}^{-5}$ & $-$ & $1350$ & $128$ & $21$ & $13$  \\ 
  \textbf{G14} & $13$ & $0.0174$ & $-8.2\times{10}^{-9}$ & $9.0\times{10}^{-7}$ & $645$ & $195$ & $68$ & $31$ & $23$  \\ 
  \textbf{G15} & $13$ & $0.0063$ & $-2.6\times{10}^{-8}$ & $2.0\times{10}^{-6}$ & $863$ & $198$ & $69$ & $33$ & $23$  \\ 
  \textbf{G16} & $14$ & $0.0177$ & $-1.1\times{10}^{-8}$ & $2.7\times{10}^{-6}$ & $637$ & $150$ & $61$ & $24$ & $14$  \\ 
   \textbf{G17} & $13$ & $0.0188$ & $-2.2\times{10}^{-8}$ & $1.6\times{10}^{-6}$ & $575$ & $165$ & $66$ & $29$ & $20$  \\ 
  \textbf{G18} & $10$ & $0.0083$ & $-4.2\times{10}^{-9}$ & $9.7\times{10}^{-9}$ & $680$ & $41$ & $17$ & $8$ & $3$ \\ 
  \textbf{G19} & $9$ & $0.0058$ & $-2.6\times{10}^{-8}$ & $5.3\times{10}^{-9}$ & $586$ & $45$ & $19$ & $9$ & $3$ \\ 
    \textbf{G20} & $9$ & $0.0142$ & $-8.3\times{10}^{-9}$ & $3.1\times{10}^{-9}$ & $692$ & $46$  & $20$& $9$ & $3$ \\
\bottomrule
\end{tabular}
\end{sc}
\end{small}
\end{center}
\vskip -0.1in
\end{table*}
To support our theoretical findings, we demonstrate our results on a dataset of Max-Cut SDP instances. The standard formulation of the Max-Cut SDP is:
\vspace{-1mm}
\begin{align*}
\min_{} \langle\C,\X\rangle
\ \textrm{ s.t.}\ & \diag(\X)=1
\textrm{ and } \X\succeq0, 
\end{align*}
%\vskip -5mm
where $\C\in\mathbb{S}^n$ is the negative graph Laplacian. This problem is an instance of Problem \eqref{mainProblemSDP}.
%This problem can be written as the following saddle-point problem of the form of \eqref{problem:generalSDPprimal}:
%\vspace{-2mm}
%\begin{align*} %\label{problem:SDPmaxCut}
%\min_{\X\succeq 0} \max_{\y\in\reals^m}\langle\C,\X\rangle + {\y}^{\top}(\vec{\mathbf{1}}-\diag(\X)).
%\end{align*}
%\vskip -3mm
Following  \cite{SDPstrictComplementarity}, we use graphs $\textbf{G1}-\textbf{G20}$ of the Gset dataset \cite{Gset} for which $n=800$. % and so for any graph $\textbf{Gi}\in\lbrace\textbf{G1},\dots,\textbf{G20}\rbrace$ we set \textcolor{red}{$\C = \textbf{Gi} - $}.
For each graph we use the CVX SDPT3 4.0 solver with HKM search directions to obtain an optimal solution $(\X^*,\y^*)$ and use it to estimate the rank of the optimal solution $r^*=\rank(\X^*)$ (as the number of eigenvalues larger than $10^{-2}$, since these are at least three orders of magnitudes larger) and the strict complementarity measure is calculated as $\lambda_{n-r^*}(\nabla_{\X}\mL(\X^*,\y^*))$.

First, we run Algorithm \ref{alg:EG} with SVDs of rank $r^*=\rank(\X^*)$ to produce an approximated solution $\widehat{\X}$, which we take to be the last iterate $\Z_{T+1}$. We initialize the $\X$ variable 
with the matrix $\X_1=(1/\trace(\boldsymbol{\Lambda}_{r^*}))\sign(\V_{r^*})\boldsymbol{\Lambda}_{r^*}\sign(\V_{r^*})^{\top}$, where $\V_{r^*}(-\boldsymbol{\Lambda}_{r^*}){\V_{r^*}}^{\top}$ is the rank-$r^*$ approximation of $-\C$ (this always resulted in a PSD matrix), and we initialize the $\y$ variable with $\y_1=\textbf{0}$. The theoretical step-size for this problem is $0.5$, yet we manually tuned the step-size $\eta$ for best performance. 
To verify the convergence of the method to an optimal primal point, we set the number of iterations $T$ to be sufficiently large to obtain a (relative) approximation error no larger than $10^{-4}$ calculated as $\langle\C,\widehat{\X}-\X^*\rangle/\vert\langle\C,\X^*\rangle\vert$, and we calculate the feasibility error by $\Vert\mA(\widehat{\X})-\b\Vert_2$.
Details regarding the specific choices for $\eta$ and $T$ are provided in \cref{appx:moreExperiments}. 
In addition, we record the number of the first iteration starting from which Condition \eqref{cond:lowRankProjCondition} holds for all subsequent gradient steps $\X_t-\eta\nabla_{\X}\mL(\X_t,\y_t)$ and $\X_t-\eta\nabla_{\X}\mL(\Z_{t+1},\w_{t+1})$.  

We repeated the above simulations using higher ranks of SVD computations in Algorithm  \ref{alg:EG} (to compute the truncated projections $\widehat{\Pi}_{\mbS^n_+}^r[\cdot[$). We take: $r = 2r^*, 4r^*, 8r^*, 16r^*$.

As can be seen in Table \ref{table:maxCut}, all instances indeed satisfy  strict complementarity. We also see that for all instances, except for \textbf{G11-G13} for which the strict complementarity measure is very small, indeed already when setting $r=r^*$ and from a relatively early stage of the run (for graphs \textbf{G1-G10} we used 1000 iterations and for graphs \textbf{G14-G20} we used 5000 iterations) indeed all projections are of rank at most $r^*$, meaning that the method indeed converges to an optimal solution. We also clearly see  how increasing the size of the SVD rank $r$ dramatically improves the situation, by significantly lowering the index of the first iteration from which on, all projection are of rank at most $r$. This is in particular apparent for the ill-conditioned instances \textbf{G11-G13}. Note that for \textbf{G11} for instance, even taking $r=12r^*$ still results in rank less than 10\% of the dimension (800), which is the rank required in worst-case by standard projected gradient methods for convex optimization.
%while our initialization does not provide a point for which a rank-$r$ truncated projection of a gradient step equals its full projection for %$r=r^*,2r^*,4r^*,8r^*,12r^*$,  \eqref{cond:lowRankProjCondition} holds starting from a relatively early iteration, and so we are guaranteed the method is converging to a global optimal solution when using rank-$r$ truncated projections. 
%Furthermore, a relatively mild increase in the truncated projection rank drastically lowers the first iteration the truncated projections are equal to the full projection. 
%Thus, we obtain a proven convergence to a global optimal solution within several dozen iterations.
%In addition, for graphs $\textbf{G11}-\textbf{G13}$ whose strict complementarity measure is small, it takes a greater number of iterations until the truncated projections are accurate.
% and for $r=r^*$ condition \eqref{cond:lowRankProjCondition} does not begin to consistently hold.

Since increasing the SVD rank $r$  intuitively also increases the runtime of computing SVDs, we plot the runtime vs. the number of iterations. In Figure \ref{fig:time_tableTC_some} we show the plots for the two most ill-conditioned instances \textbf{G11, G12}, the rest are given in \cref{appx:moreExperiments} together with accompanying details. We can see in Table \ref{table:maxCut} and Figure \ref{fig:time_tableTC_some} that, while increasing $r$ can dramatically improve  convergence (in the sense that truncated projections match the accurate ones), the cost in additional runtime is quite moderate and in particular sublinear in $r$. It can even be the case (as with \textbf{G11} in  Figure \ref{fig:time_tableTC_some}) that increasing $r$ results in better-conditioned SVD computations which reduces runtime due to faster convergence.
%In Figure \ref{fig:time_tableTC_some} we plot the runtime of the method for several truncated projection ranks. The plots are averaged over 5 i.i.d. runs. %See \cref{appx:moreExperiments} for details on the implementation and for plots of the remaining experiments. The plots show that increasing the rank of the truncated projection does not consistently result in a proportional increase in the runtime of the algorithm. For instance, for graph $\textbf{G1}$, raising the rank of the truncated projections by a factor of $8$ increased the runtime much less than $8$ times. This is especially interesting for graph $\textbf{G11}$ for which proven convergence with $r=r^*$ was not demonstrated, as indicated in \cref{table:maxCut}, yet for larger choices of $r$ condition \eqref{cond:lowRankProjCondition} begins to hold at some point and the runtime does not dramatically differ.
\vspace{-20pt}
\begin{figure}[h] 
\vskip 0.2in
\begin{center}
\centering   
\subfigure[G11]{\includegraphics[width=0.49\columnwidth]{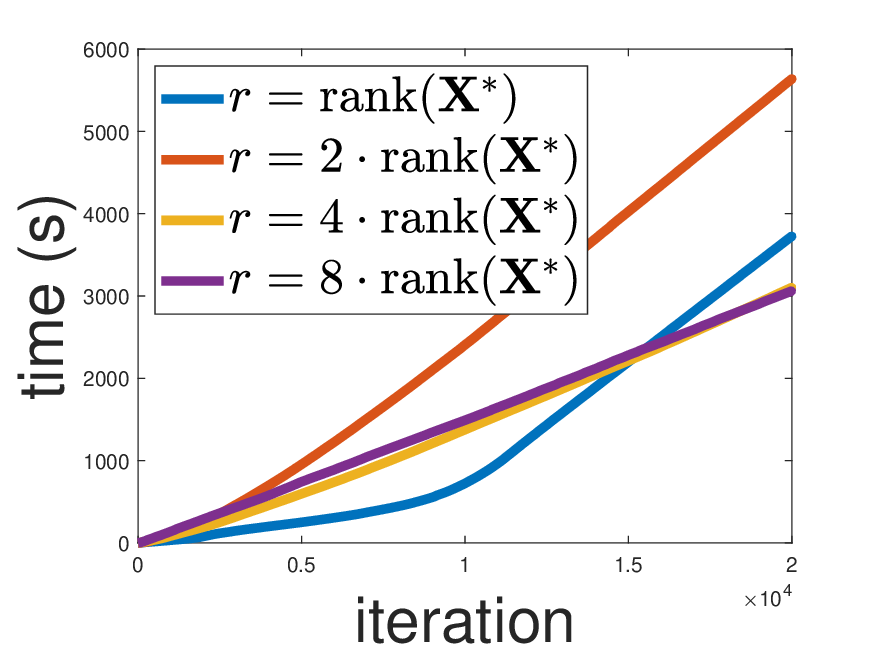}}
\subfigure[G12]{\includegraphics[width=0.49\columnwidth]{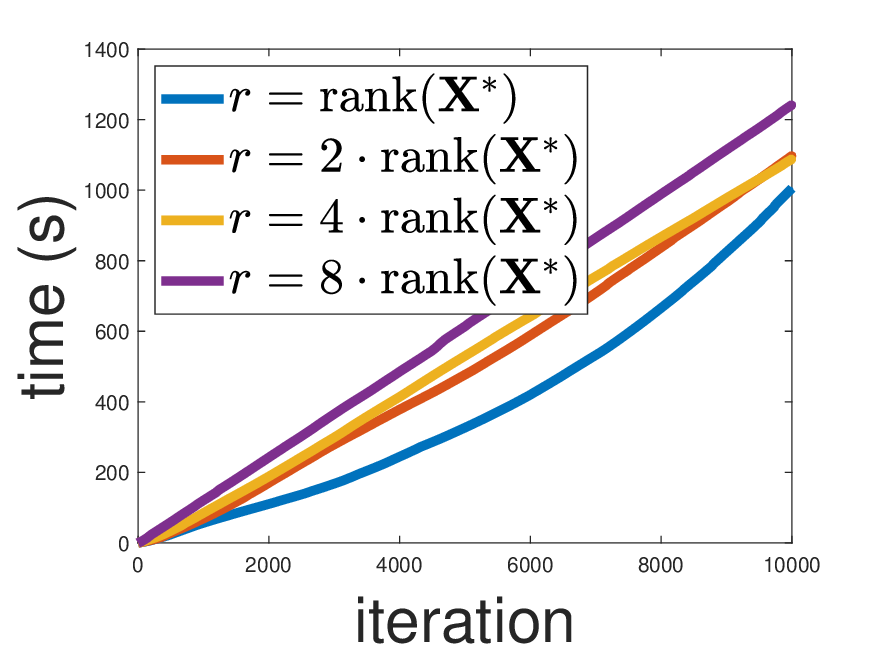}}
%\medskip
%\subfigure[G12]{\includegraphics[width=0.49\columnwidth]{G12_time}}
%\subfigure[G13]{\includegraphics[width=0.49\columnwidth]{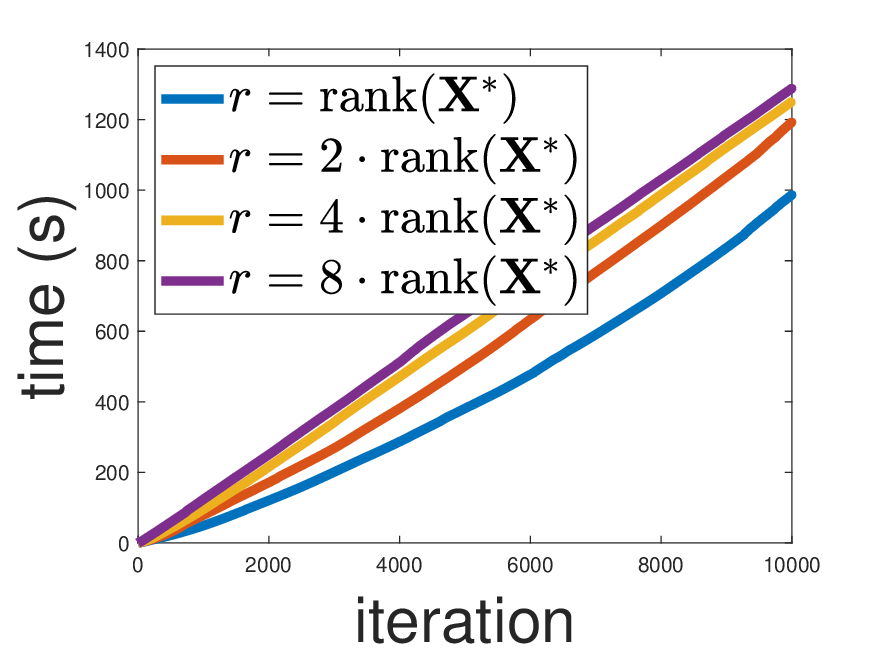}}
\caption{Time as a function of number of iterations it takes to run Algorithm \ref{alg:EG} with varying SVD ranks for the Max-Cut problem.}
\label{fig:time_tableTC_some}
\end{center}
\vskip -0.2in
\end{figure}

%\textcolor{red}{this is not explained well. Ref. the related work section and again the references on the rank of the BM method}
Finally, as discussed in \cref{sec:relatedWork}, the highly-popular Burer-Monteiro approach for the Max-Cut problem requires a factorization of a size at least $r(r+1)/2+r>n$ to ensure convergence to a global optimal solution \cite{waldspurger2020rank, SDPstrictComplementarity}. In our case, this corresponds to a factorization of rank $r\ge39$. In contrast, our method with truncated projections of rank below the Burer-Monteiro threshold, such as when $r=r^*$, $r=2r^*$, and in some instances even $r=4r^*$, exhibits on almost all instances provable convergence after just a few dozen iterations.

\nocite{langley00}

\bibliography{bibs}
\bibliographystyle{icml2024}

%%%%%%%%%%%%%%%%%%%%%%%%%%%%%%%%%%%%%%%%%%%%%%%%%%%%%%%%%%%%%%%%%%%%%%%%%%%%%%%
%%%%%%%%%%%%%%%%%%%%%%%%%%%%%%%%%%%%%%%%%%%%%%%%%%%%%%%%%%%%%%%%%%%%%%%%%%%%%%%
% APPENDIX
%%%%%%%%%%%%%%%%%%%%%%%%%%%%%%%%%%%%%%%%%%%%%%%%%%%%%%%%%%%%%%%%%%%%%%%%%%%%%%%
%%%%%%%%%%%%%%%%%%%%%%%%%%%%%%%%%%%%%%%%%%%%%%%%%%%%%%%%%%%%%%%%%%%%%%%%%%%%%%%
\newpage
\appendix
\onecolumn
\section{Proofs Omitted from Section \ref{sec:SDP}}\label{appx:proofsSecSDP}

We first restate each lemma and then prove it.

%\subsection{Proof of Lemma \ref{lemma:strictCompSDPvsStrictCompSP}}
%\label{appx:proofLemma22}
%
%\begin{lemma}
%Let $\X^*\succeq0$ be a rank-$r^*$ optimal solution to Problem \eqref{mainProblemSDP}. $\X^*$ satisfies the generalized strict complementarity for SDPs assumption with parameters $r\ge r^*,\delta>0$ if and only if $(\X^*,\y^*)\in\mathbb{S}^n_+\times\reals^m$ is a saddle-point of Problem \eqref{problem:generalSDPprimal} and generalized strict complementarity for saddle points (see \cite{ourExtragradient}) holds for $(\X^*,\y^*)$ with parameters $r,\delta$.
%\end{lemma}
%
%\begin{proof}
%Let $(\X^*,\y^*)\in\mathbb{S}^n_+\times\reals^m$ be a primal-dual pair such that $\rank(\X^*)=r^*$ and $\X^*$ satisfies the generalized strict complementarity for SDPs assumption with parameters $r,\delta$. Then, there exists $\y^*\in\argmax_{\y\in\reals^m}f(\X^*) + {\y}^{\top}(\b-\mathcal{A}(\X^*))$ such that $\rank(\nabla{}f(\X^*)-\mathcal{A}^{\top}(\y^*))=n-r$. Therefore, for any $r\ge r^*$ it holds that
%\begin{align*}
%&\lambda_{n-r}(\nabla{}f(\X^*)-\mathcal{A}^{\top}(\y^*))
%\\ & = \lambda_{n-r}(\nabla{}f(\X^*)-\mathcal{A}^{\top}(\y^*))-\lambda_{n}(\nabla{}f(\X^*)-\mathcal{A}^{\top}(\y^*)).
%\end{align*}
%For $\mL(\X,\y)=f(\X) + {\y}^{\top}(\b-\mathcal{A}(\X))$ it holds that $\nabla_{\X}\mL(\X,\y)=\nabla{}f(\X)-\mathcal{A}^{\top}(\y)$. Therefore, generalized strict complementarity for SDPs with parameters $r,\delta$ holds if and only if Assumption 3 in \cite{ourExtragradient} holds with parameters $r,\delta$.
%\end{proof}

\subsection{Proof of Lemma \ref{lemma:radius}}
\label{appx:proofLemma24}

To prove \cref{lemma:radius} we first prove the following lemma, which shows the relationship between the range of the optimal solution $\X^*$ and the null-space of the gradient with respect to $\X$ of the Lagrangian at a saddle-point $(\X^*,\y^*)$. This proves the complementarity condition \eqref{eq:compcond} holds.
\begin{lemma} \label{lemma:range_null}
Let $(\X^*,\y^*)$ be a saddle-point of Problem \eqref{problem:generalSDPprimal} and assume Assumptions \ref{Ass:slater} and \ref{Ass:primalBounded} hold. Then, $\nabla_{\X}\mL(\X^*,\y^*)=\nabla{}f(\X^*)-\mA^{\top}(\y^*)\succeq0$ and $\range(\X^*)\subseteq\nullspace(\nabla_{\X}\mL(\X^*,\y^*))$.
\end{lemma}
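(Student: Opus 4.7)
The approach is to use the definition of a saddle-point together with a standard first-order variational inequality on the PSD cone, and then exploit the positive semidefiniteness of both $\X^*$ and $\nabla_{\X}\mL(\X^*,\y^*)$ to transform a trace-inner-product identity into the desired range/null-space inclusion. Note that under Assumptions~\ref{Ass:slater} and \ref{Ass:primalBounded}, strong duality holds and a saddle-point $(\X^*,\y^*)$ with $\X^*$ primal optimal and $\y^*$ dual optimal exists, so the hypothesis is nonvacuous.

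First, I would invoke the saddle-point property: $\X^*$ is a minimizer of the convex function $\X\mapsto\mL(\X,\y^*)=f(\X)+{\y^*}^{\top}(\b-\mA(\X))$ over the convex set $\mbS^n_+$. The standard first-order optimality condition for a convex problem on a convex set then gives the variational inequality
\[
\langle \nabla_{\X}\mL(\X^*,\y^*),\,\X-\X^*\rangle\ \ge\ 0\qquad\forall\,\X\succeq 0.
\]
Plugging in $\X=\X^*+t\Y$ for arbitrary $\Y\succeq 0$ and $t\ge 0$, dividing by $t$, yields $\langle\nabla_{\X}\mL(\X^*,\y^*),\Y\rangle\ge 0$ for every $\Y\succeq 0$. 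Since the PSD cone is self-dual, this is equivalent to $\nabla_{\X}\mL(\X^*,\y^*)\succeq 0$, giving the first claim.

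Next, I would plug the two special choices $\X=0$ and $\X=2\X^*$ into the variational inequality. The first gives $\langle \nabla_{\X}\mL(\X^*,\y^*),\X^*\rangle\le 0$, while the second gives $\langle \nabla_{\X}\mL(\X^*,\y^*),\X^*\rangle\ge 0$. Combining,
\[
\trace\bigl((\nabla f(\X^*)-\mA^{\top}(\y^*))\X^*\bigr)\ =\ 0,
\]
which is the standard complementarity identity.

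Finally, I would convert this trace identity into the range inclusion. Writing the eigendecomposition $\X^*=\sum_{i=1}^{r^*}\sigma_i \u_i\u_i^{\top}$ with $\sigma_i>0$, the identity becomes $\sum_{i=1}^{r^*}\sigma_i\,\u_i^{\top}\nabla_{\X}\mL(\X^*,\y^*)\u_i=0$. Since $\nabla_{\X}\mL(\X^*,\y^*)\succeq 0$ each summand is nonnegative, forcing $\u_i^{\top}\nabla_{\X}\mL(\X^*,\y^*)\u_i=0$ for every $i$. Using the elementary fact that for any PSD matrix $\M$, the relation $\v^{\top}\M\v=0$ implies $\M\v=0$ (apply Cauchy--Schwarz to $\M^{1/2}$), we conclude $\nabla_{\X}\mL(\X^*,\y^*)\u_i=0$ for all $i$, i.e.\ $\u_i\in\nullspace(\nabla_{\X}\mL(\X^*,\y^*))$. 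Since $\range(\X^*)=\mathrm{span}\{\u_1,\dots,\u_{r^*}\}$, this gives the inclusion $\range(\X^*)\subseteq\nullspace(\nabla_{\X}\mL(\X^*,\y^*))$. The only nonroutine step is the PSD-to-null-space implication, which I would handle explicitly via the eigendecomposition as just described; everything else is a direct consequence of the saddle-point variational inequality.
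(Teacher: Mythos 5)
Your proof is correct, but it follows a different route from the paper's. The paper invokes strong duality (via Assumptions \ref{Ass:slater} and \ref{Ass:primalBounded}) and writes down the KKT system with an explicit dual slack variable $\Z^*\succeq 0$: the stationarity equation immediately identifies $\Z^*=\nabla f(\X^*)-\mA^{\top}(\y^*)\succeq 0$, and the complementarity equation $\langle\Z^*,\X^*\rangle=0$ together with $\X^*,\Z^*\succeq0$ is used (as a cited standard fact) to conclude $\Z^*\X^*=0$, hence the range/null-space inclusion. You instead work directly from the saddle-point definition: the variational inequality $\langle\nabla_{\X}\mL(\X^*,\y^*),\X-\X^*\rangle\ge0$ on $\mbS^n_+$, self-duality of the PSD cone to get $\nabla_{\X}\mL(\X^*,\y^*)\succeq0$, the test points $\X=0$ and $\X=2\X^*$ to get the complementarity trace identity, and an explicit eigendecomposition plus Cauchy--Schwarz argument to upgrade $\trace(\M\X^*)=0$ with $\M\succeq0$ to $\M\u_i=0$ on the range of $\X^*$. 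Your route is more elementary and self-contained --- it needs the assumptions only to guarantee a saddle-point exists, not to invoke KKT machinery, and it proves rather than cites the PSD-to-null-space step --- while the paper's KKT-based argument is shorter and extends verbatim to the constrained variants treated later (e.g.\ Lemma \ref{lemma:range_null_nonsmooth} and the inequality-constrained setting), where the dual slack formulation is convenient. Both establish exactly the same two intermediate facts ($\nabla_{\X}\mL(\X^*,\y^*)\succeq0$ and $\trace(\nabla_{\X}\mL(\X^*,\y^*)\X^*)=0$) before concluding, so the overall structure is parallel; there is no gap in your argument.
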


\begin{proof}
%Denote the Lagrangian of Problem \eqref{problem:ineqConstrained} as $\mL(\X,\y,\blambda,\Z)=f(\X) +\y^{\top}(\b-\mA(\X))+\blambda^{\top}g(\X)-\langle\Z,\X\rangle.$ 
Under Assumptions \ref{Ass:slater} and \ref{Ass:primalBounded}, strong duality holds for Problem \eqref{problem:generalSDPprimal}. This implies that for the optimal solution $\X^*$ and some optimal dual solution $(\y^*,\Z^*)$ the KKT conditions for Problem \eqref{problem:generalSDPprimal} hold.
Writing the KKT conditions we obtain that
\begin{align*}
& 0=\nabla{}f(\X^*)-\mA^{\top}(\y^*)-\Z^*,
\\ & \langle\Z^*,\X^*\rangle=0,
\\ & \mA(\X^*)=\b,
\\ & \X^*,\Z^*\succeq0.
\end{align*} 
In particular, since $\Z^*\succeq0$ we have from the first order optimality condition that $\Z^*=\nabla{}f(\X^*)-\mA^{\top}(\y^*)\succeq0$.

In addition, the fact that $\langle\Z^*,\X^*\rangle=0$ and $\X^*,\Z^*\succeq0$ imply that $\Z^*\X^*=0$. This means that $\range(\X^*)\subseteq\nullspace(\Z^*)$. 

\end{proof}

Now we restate and prove \cref{lemma:radius}.
\begin{lemma}
Let $(\X^*,\y^*)$ be a saddle-point of Problem \eqref{problem:generalSDPprimal} and denote the rank of the complementarity condition $\tilde{r}=n-\rank(\nabla_{\X}\mL(\X^*,\y^*))$. For any $r\ge\tilde{r}$, $\eta\ge0$, and $\X\in\mbS^n,(\Z,\w)\in\mathbb{S}^n_+\times\reals^m$, if
\begin{align*}
& \max\lbrace\Vert\X-\X^*\Vert_F,\Vert(\Z,\w)-(\X^*,\y^*)\Vert\rbrace
\le \frac{\eta\max\left\lbrace\lambda_{n-r}(\nabla_{\X}\mL(\X^*,\y^*)),\sqrt{\tilde{r}}\lambda_{n-r+\tilde{r}-1}(\nabla_{\X}\mL(\X^*,\y^*))\right\rbrace}{1+\sqrt{2}\eta\max\lbrace\beta,\Vert\mA\Vert\rbrace},
\end{align*}
then $\rank(\Pi_{\mathbb{S}^n_+}[\X-\eta\nabla_{\X}\mL(\Z,\w)])\le r$ (meaning $\Pi_{\mathbb{S}^n_+}[\X-\eta\nabla_{\X}\mL(\Z,\w)]) =\widehat{\Pi}^r_{\mathbb{S}^n_+}[\X-\eta\nabla_{\X}\mL(\Z,\w)]$). 
\end{lemma}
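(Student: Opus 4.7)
The plan is to reduce the claim $\rank(\Pi_{\mbS^n_+}[\M])\le r$ to the eigenvalue inequality $\lambda_{r+1}(\M)\le 0$, where $\M:=\X-\eta\nabla_{\X}\mL(\Z,\w)$, and to establish this by perturbation-comparing $\M$ to the anchor matrix $\M^*:=\X^*-\eta\nabla_{\X}\mL^*$ via Weyl's inequality. The reduction is immediate from the definition \eqref{def:EuclideanProjection}: the PSD projection discards nonpositive eigenvalues, so if $\lambda_{r+1}(\M)\le 0$ then $\Pi_{\mbS^n_+}[\M]$ has rank at most $r$ and therefore coincides with $\widehat{\Pi}^r_{\mbS^n_+}[\M]$.

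The core structural step is to read off the spectrum of $\M^*$ exactly. By \cref{lemma:range_null}, $\nabla_{\X}\mL^*\succeq 0$ and $\range(\X^*)\subseteq\nullspace(\nabla_{\X}\mL^*)$, so the PSD summand $\X^*$ and the NSD summand $-\eta\nabla_{\X}\mL^*$ are supported on orthogonal subspaces. In decreasing order, the eigenvalues of $\M^*$ are therefore the $r^*:=\rank(\X^*)$ positive eigenvalues of $\X^*$, then $\tilde{r}-r^*$ zeros, then the $n-\tilde{r}$ values $-\eta\lambda_{n-\tilde{r}}(\nabla_{\X}\mL^*)\ge\cdots\ge-\eta\lambda_1(\nabla_{\X}\mL^*)$. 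A direct index count yields, for any $r\ge\tilde{r}$,
\[
\lambda_{r+1}(\M^*) = -\eta\lambda_{n-r}(\nabla_{\X}\mL^*), \qquad \lambda_{r+2-\tilde{r}}(\M^*) = -\eta\lambda_{n-r+\tilde{r}-1}(\nabla_{\X}\mL^*),
\]
the latter identity being meaningful (strictly negative) exactly when $r\ge 2\tilde{r}-1$; otherwise $\lambda_{n-r+\tilde{r}-1}(\nabla_{\X}\mL^*)=0$ and the corresponding term in the $\max$ of \eqref{ineq:boundRadiusLemma} vanishes, so only the first of the two bounds below is needed.

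Next, I would apply Weyl's inequality $\lambda_{i+j-1}(\M^*+\Delta)\le\lambda_i(\M^*)+\lambda_j(\Delta)$ with $\Delta:=\M-\M^*$ in two ways. The choice $(i,j)=(r+1,1)$ yields $\lambda_{r+1}(\M)\le -\eta\lambda_{n-r}(\nabla_{\X}\mL^*)+\|\Delta\|_F$, using $\lambda_1(\Delta)\le\|\Delta\|_F$. The choice $(i,j)=(r+2-\tilde{r},\tilde{r})$ combined with the elementary bound $\lambda_{\tilde{r}}(\Delta)\le\|\Delta\|_F/\sqrt{\tilde{r}}$ (from $\tilde{r}\,\lambda_{\tilde{r}}(\Delta)^2\le\sum_{i\le\tilde{r}}\lambda_i(\Delta)^2\le\|\Delta\|_F^2$ when $\lambda_{\tilde{r}}(\Delta)\ge 0$, trivial otherwise) yields $\lambda_{r+1}(\M)\le -\eta\lambda_{n-r+\tilde{r}-1}(\nabla_{\X}\mL^*)+\|\Delta\|_F/\sqrt{\tilde{r}}$. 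In either version the RHS is nonpositive provided $\|\Delta\|_F$ is at most $\eta$ times the corresponding term in the $\max$ of \eqref{ineq:boundRadiusLemma}; taking the better of the two exactly reproduces the $\max$.

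The remaining task is to bound $\|\Delta\|_F$. Splitting $\Delta=(\X-\X^*)-\eta[\nabla f(\Z)-\nabla f(\X^*)]+\eta\mA^\top(\w-\y^*)$, the triangle inequality together with $\beta$-smoothness of $f$, $\|\mA^\top\|=\|\mA\|$, and Cauchy--Schwarz in $\reals^2$ yields
\[
\|\Delta\|_F \le \|\X-\X^*\|_F + \eta\sqrt{\beta^2+\|\mA\|^2}\cdot\|(\Z,\w)-(\X^*,\y^*)\| \le \bigl(1+\sqrt{2}\,\eta\max\{\beta,\|\mA\|\}\bigr)\rho,
\]
where $\rho$ denotes the common bound on the LHS of \eqref{ineq:boundRadiusLemma} and we used $\sqrt{\beta^2+\|\mA\|^2}\le\sqrt{2}\max\{\beta,\|\mA\|\}$. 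Substituting into whichever of the two Weyl bounds is active and invoking the hypothesized upper bound on $\rho$ gives $\lambda_{r+1}(\M)\le 0$, completing the proof. The most delicate step is the index-counting in the spectral analysis of $\M^*$: extracting the $\sqrt{\tilde{r}}$ improvement in the radius requires choosing the Weyl indices $(r+2-\tilde{r},\tilde{r})$ so that Frobenius-norm concentration across $\tilde{r}$ eigenvalues of $\Delta$ is available while still pairing with a sufficiently negative eigenvalue of $\M^*$; the other pieces are standard perturbation bookkeeping.
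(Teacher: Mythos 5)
Your proposal is correct and follows essentially the same route as the paper's proof: reduce to $\lambda_{r+1}(\X-\eta\nabla_{\X}\mL(\Z,\w))\le 0$, read off the spectrum of the anchor $\X^*-\eta\nabla_{\X}\mL^*$ via the complementarity/range--null relation, bound $\Vert\P-\P^*\Vert_F$ by $(1+\sqrt{2}\eta\max\{\beta,\Vert\mA\Vert\})\rho$ using smoothness, and apply Weyl twice (once with $\lambda_1(\Delta)$, once with the $(r+2-\tilde r,\tilde r)$ indices and the $\Vert\Delta\Vert_F/\sqrt{\tilde r}$ bound), taking the better of the two. Your explicit handling of the case $r<2\tilde r-1$ and of a possibly negative $\lambda_{\tilde r}(\Delta)$ is slightly more careful than the paper's, but the argument is the same.
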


\begin{proof}
Denote $\P:=\X-\eta\nabla{}\mL(\Z,\w)$. For $\rank(\Pi_{\mathbb{S}^n_+}[\P])\le r$ it must hold that $\lambda_{r+1}(\P)\le0$. 

Denote $\P^*:=\X^*-\eta\nabla{}\mL(\X^*,\y^*)$. Invoking \cref{lemma:range_null} we have that any eigenvector $\v$ of $\X^*$ corresponding to a positive eigenvalue is an eigenvector of $\nabla_{\X}\mL(\X^*,\y^*)$ corresponding to the eigenvalue $0$. Therefore, $\P^*$ satisfies that
\begin{align} \label{ineq:inProofLambdasOpt}
i\le \rank(\X^*):\quad & \lambda_i(\P^*) =  \lambda_i(\X^*), \nonumber
\\ i>\rank(\X^*):\quad & \lambda_i(\P^*) =  -\eta\lambda_{n-i+1}(\nabla_{\X}\mL(\X^*,\y^*)).
\end{align}

We will first bound the distance between $\P$ and $\P^*$.
Using the smoothness of $\mL$ we have that
\begin{align} 
\Vert\nabla_{\X}\mL(\Z,\w)-\nabla_{\X}\mL(\X^*,\y^*)\Vert_F & \le \Vert\nabla_{\X}\mL(\Z,\w)-\nabla_{\X}\mL(\X^*,\w)\Vert_F + \Vert\nabla_{\X}\mL(\X^*,\w)-\nabla_{\X}\mL(\X^*,\y^*)\Vert_F \nonumber
\\ & = \Vert\nabla{}f(\Z)-\nabla{}f(\X^*)\Vert_F + \Vert\mA^{\top}(\w-\y^*)\Vert_F \nonumber
\\ & \le \beta\Vert\Z-\X^*\Vert_F+\Vert\mA\Vert\Vert\w-\y^*\Vert_2 \nonumber
\\ & \le \max\lbrace\beta,\Vert\mA\Vert\rbrace\left(\Vert\Z-\X^*\Vert_F+\Vert\w-\y^*\Vert_2\right) \nonumber
\\ & \le \sqrt{2}\max\lbrace\beta,\Vert\mA\Vert\rbrace\left(\Vert(\Z,\w)-(\X^*,\y^*)\Vert\right) \label{ineq:inProofSmoothnessXBound_1}
\end{align}
Therefore, 
\begin{align} \label{ineq:inProofP_Pstar_dis}
\Vert\P-\P^*\Vert_F  & \le \Vert\X-\X^*\Vert_F+\eta\Vert\nabla_{\X}\mL(\Z,\w)-\nabla_{\X}\mL(\X^*,\y^*)\Vert_F \nonumber
\\ & \le \Vert\X-\X^*\Vert_F +\sqrt{2}\eta\max\lbrace\beta,\Vert\mA\Vert\rbrace\Vert(\Z,\w)-(\X^*,\y^*)\Vert \nonumber
\\ & \le (1+\sqrt{2}\eta\max\lbrace\beta,\Vert\mA\Vert\rbrace)\max\lbrace\Vert\X-\X^*\Vert_F,\Vert(\Z,\w)-(\X^*,\y^*)\Vert\rbrace,
\end{align}
where the second inequality follows from \eqref{ineq:inProofSmoothnessXBound_1}.

Now we will bound $\lambda_{r+1}(\P)$. Using Weyl's inequality we have that
\begin{align*}
\lambda_{r+1}(\P) 
& \le  \lambda_{r+1}(\P^*) + \lambda_1(\P-\P^*)
\\ & \underset{(a)}{=} -\eta\lambda_{n-r}(\nabla_{\X}\mL(\X^*,\y^*)) + \lambda_1(\P-\P^*)
\\ & \le -\eta\lambda_{n-r}(\nabla_{\X}\mL(\X^*,\y^*)) + \Vert\P-\P^*\Vert_F
\\ & \underset{(b)}{\le} -\eta\lambda_{n-r}(\nabla_{\X}\mL(\X^*,\y^*))
+ (1+2\eta\max\lbrace\beta,\Vert\mA\Vert\rbrace)\max\lbrace\Vert\X-\X^*\Vert_F,\Vert(\Z,\w)-(\X^*,\y^*)\Vert\rbrace,
\end{align*}
where (a) follows from \eqref{ineq:inProofLambdasOpt} and (b) follows from \eqref{ineq:inProofP_Pstar_dis}.
Therefore, the condition $\lambda_{r+1}(\P)\le0$ holds if
\begin{align} \label{ineq:RadiusOpt1}
& \max\lbrace\Vert\X-\X^*\Vert_F,\Vert(\Z,\w)-(\X^*,\y^*)\Vert\rbrace \le \frac{\eta}{1+\sqrt{2}\eta\max\lbrace\beta,\Vert\mA\Vert\rbrace}\lambda_{n-r}(\nabla_{\X}\mL(\X^*,\y^*)).
\end{align}

Alternatively, if $r\ge 2\tilde{r}-1$ then using the general Weyl inequality we obtain that
\begin{align*}
\lambda_{r+1}(\P) 
& \le  \lambda_{r-\tilde{r}+2}(\P^*) + \lambda_{\tilde{r}}(\P-\P^*)
\underset{(a)}{=} -\eta\lambda_{n-r+\tilde{r}-1}(\nabla_{\X}\mL(\X^*,\y^*)) + \lambda_{\tilde{r}}(\P-\P^*)
\\ & = -\eta\lambda_{n-r+\tilde{r}-1}(\nabla_{\X}\mL(\X^*,\y^*)) +\sqrt{\lambda_{\tilde{r}}^2(\P-\P^*)} 
\\ & \le -\eta\lambda_{n-r+\tilde{r}-1}(\nabla_{\X}\mL(\X^*,\y^*)) +\frac{1}{\sqrt{\tilde{r}}}\Vert\P-\P^*\Vert_F
\\ & \underset{(b)}{\le} -\eta\lambda_{n-r+\tilde{r}-1}(\nabla_{\X}\mL(\X^*,\y^*)) 
+ \frac{1}{\sqrt{\tilde{r}}}(1+\sqrt{2}\eta\max\lbrace\beta,\Vert\mA\Vert\rbrace)\max\lbrace\Vert\X-\X^*\Vert_F,\Vert(\Z,\w)-(\X^*,\y^*)\Vert\rbrace,
\end{align*} 
where here too (a) follows from \eqref{ineq:inProofLambdasOpt} and (b) follows from \eqref{ineq:inProofP_Pstar_dis}.
Therefore, the condition $\lambda_{r+1}(\P)\le0$ holds if
\begin{align} \label{ineq:RadiusOpt2}
\max\lbrace\Vert\X-\X^*\Vert_F,\Vert(\Z,\w)-(\X^*,\y^*)\Vert\rbrace \le \frac{\eta}{1+\sqrt{2}\eta\max\lbrace\beta,\Vert\mA\Vert\rbrace}\sqrt{\tilde{r}}\lambda_{n-r+\tilde{r}-1}(\nabla_{\X}\mL(\X^*,\y^*)).
\end{align}
Taking the maximum between \eqref{ineq:RadiusOpt1} and \eqref{ineq:RadiusOpt2} we obtain the radius in the lemma.

\end{proof}

\subsection{Proof of Lemma \ref{lemma:boundf_and_norm}}
\label{appx:proofLemma25}

\begin{lemma}
Let $\X^*\succeq0$ be an optimal solution to Problem \eqref{mainProblemSDP} and let $(\widehat{\X},\widehat{\y})\in\mathbb{S}^n_+\times\reals^m$. Let $\y^*\in\argmax_{\y\in\reals^m}\mL(\X^*,\y)$.
Denote $\mC=\lbrace\y\in\reals^m\ \vert\ \Vert\y\Vert_2\le2\Vert\y^*\Vert_2\rbrace$. Assume that for all $\y\in\mC$ 
\begin{align*}
\mathcal{L}(\widehat{\X},\y)-\mathcal{L}(\X^*,\widehat{\y})\le\varepsilon.
\end{align*}
Then it holds that
$f(\widehat{\X})-f(\X^*)\le\varepsilon$ and $\Vert\mA(\widehat{\X})-\b\Vert_2\le\varepsilon/\Vert\y^*\Vert_2$.
\end{lemma}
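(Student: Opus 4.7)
The plan is to exploit the freedom in choosing $\y$ within the ball $\mC:=\{\y\in\reals^m : \Vert\y\Vert_2\le 2\Vert\y^*\Vert_2\}$. First, observe that since $\X^*$ is feasible for Problem \eqref{mainProblemSDP} we have $\mA(\X^*)=\b$, so $\mL(\X^*,\widehat{\y})=f(\X^*)$ regardless of $\widehat{\y}$. Hence the hypothesis simplifies, for every $\y\in\mC$, to
\[
f(\widehat{\X}) + \y^{\top}(\b-\mA(\widehat{\X})) \;\le\; f(\X^*)+\varepsilon.
\]

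For the objective bound, I would simply plug in $\y=\mathbf{0}\in\mC$ (which is always admissible), yielding $f(\widehat{\X})-f(\X^*)\le\varepsilon$ immediately.

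For the feasibility bound, the idea is to pick $\y$ so as to extract the norm $\Vert\mA(\widehat{\X})-\b\Vert_2$ via a dual alignment trick. Assuming $\mA(\widehat{\X})\neq\b$ (otherwise the claim is trivial), set
\[
\y \;=\; \y^* \;+\; \Vert\y^*\Vert_2\cdot\frac{\b-\mA(\widehat{\X})}{\Vert\b-\mA(\widehat{\X})\Vert_2}.
\]
The triangle inequality gives $\Vert\y\Vert_2\le 2\Vert\y^*\Vert_2$, so $\y\in\mC$. Substituting this choice into the simplified hypothesis produces
\[
\mL(\widehat{\X},\y^*) - f(\X^*) \;+\; \Vert\y^*\Vert_2\,\Vert\b-\mA(\widehat{\X})\Vert_2 \;\le\; \varepsilon.
\]

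The main (and only nontrivial) step is to argue that $\mL(\widehat{\X},\y^*)\ge f(\X^*)$, which will let me discard the first term on the left. This follows from the saddle-point structure: under Assumptions \ref{Ass:slater} and \ref{Ass:primalBounded} strong duality holds, and since $\y^*\in\argmax_{\y}\mL(\X^*,\y)$ with $\X^*$ primal-optimal, the pair $(\X^*,\y^*)$ is a saddle-point of $\mL$ over $\mbS^n_+\times\reals^m$. In particular $\X^*\in\argmin_{\X\succeq 0}\mL(\X,\y^*)$, so $\mL(\widehat{\X},\y^*)\ge\mL(\X^*,\y^*)=f(\X^*)$ because $\widehat{\X}\succeq 0$. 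Plugging this into the displayed inequality and dividing by $\Vert\y^*\Vert_2$ gives the claimed feasibility bound. The only real subtlety is justifying the saddle-point property; the rest is algebraic substitution.
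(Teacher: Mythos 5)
Your proof is correct and follows essentially the same route as the paper's: both plug a carefully chosen $\y\in\mC$ into the hypothesis and rely on the same key saddle-point inequality $f(\X^*)=\mL(\X^*,\y^*)\le\mL(\widehat{\X},\y^*)$ (which the paper likewise simply asserts from $(\X^*,\y^*)$ being a saddle point, since the stated argmax condition alone is vacuous given $\mA(\X^*)=\b$). The only cosmetic differences are your test vectors --- $\y=\mathbf{0}$ for the objective bound and the shifted aligned vector $\y^*+\Vert\y^*\Vert_2(\b-\mA(\widehat{\X}))/\Vert\b-\mA(\widehat{\X})\Vert_2$ for feasibility, which sidesteps the Cauchy--Schwarz step the paper needs with its fully aligned choice $2\Vert\y^*\Vert_2(\b-\mA(\widehat{\X}))/\Vert\mA(\widehat{\X})-\b\Vert_2$.
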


\begin{proof}
For all $\y\in\mC$ it holds that
$\mathcal{L}(\widehat{\X},\y)-\mathcal{L}(\X^*,\widehat{\y})\le\varepsilon$.
In particular, denoting $\alpha:=2\Vert\y^*\Vert$ and plugging in $\y=\frac{\alpha(\b-\mA(\widehat{\X}))}{\Vert\mA(\widehat{\X})-\b\Vert_2}\in\mC$, we obtain
\begin{align} \label{ineq:func&normBound}
& f(\widehat{\X})+\y^{\top}(\b-\mA(\widehat{\X}))-f(\X^*) = f(\widehat{\X})-f(\X^*)+\alpha\Vert\mA(\widehat{\X})-\b\Vert_2\le\varepsilon.
\end{align}

Since $\Vert\mA(\widehat{\X})-\b\Vert_2\ge0$ it follows immediately that $f(\widehat{\X})-f(\X^*)\le\varepsilon$. 
In addition, since $(\X^*,\y^*)$ is a saddle-point of Problem \eqref{problem:generalSDPprimal} it holds that 
\begin{align*}
f(\X^*) = \mL(\X^*,\y^*) \le \mL(\widehat{\X},\y^*) = f(\widehat{\X}) + {\y^*}^{\top}(\b-\mA(\widehat{\X})).
\end{align*}
Plugging this into \eqref{ineq:func&normBound}, we obtain
\begin{align} \label{ineq:func&normBound_1}
\alpha\Vert\mA(\widehat{\X})-\b\Vert_2 \le \varepsilon+{\y^*}^{\top}(\b-\mA(\widehat{\X})) \le \varepsilon+\Vert\y^*\Vert_2\Vert\mA(\widehat{\X})-\b\Vert_2. 
\end{align}
Rearranging, we obtain the desired result of $\Vert\mA(\widehat{\X})-\b\Vert_2\le\varepsilon/(\alpha-\Vert\y^*\Vert_2)=\varepsilon/\Vert\y^*\Vert_2$.

\end{proof}

\subsection{Proof of \cref{thm:SDP}}
\label{appx:proofThm26}
\begin{theorem}
Fix an optimal solution $\X^*\succeq0$ to Problem \eqref{mainProblemSDP}. Let $\y^*\in\reals^m$ be a corresponding dual solution and suppose $(\X^*,\y^*)$ satisfies the complementarity condition \eqref{eq:compcond} with parameter $\tilde{r}:=n-\rank(\nabla_{\X}\mL(\X^*,\y^*))$. 
Let $\lbrace(\X_t,\y_t)\rbrace_{t\ge1}$ and $\lbrace(\Z_t,\w_t)\rbrace_{t\ge2}$ be the sequences of iterates generated by Algorithm \ref{alg:EG} with a fixed step-size
$\eta \le \frac{1}{\sqrt{2(\beta^2+\Vert\mA\Vert^2)}}$. Fix some $r \geq \tilde{r}$ and assume the initialization $(\X_1,\y_1)$ satisfies that $\Vert(\X_1,\y_1)-(\X^*,\y^*)\Vert_F\le R_0(r)$, where
\begin{align*}
& R_0(r):= 
\frac{\eta\max\left\lbrace\lambda_{n-r}(\nabla_{\X}\mL(\X^*,\y^*)),\sqrt{\tilde{r}}\lambda_{n-r+\tilde{r}-1}(\nabla_{\X}\mL(\X^*,\y^*))\right\rbrace}{2(1+\sqrt{2}\eta\max\lbrace\beta,\Vert\mA\Vert\rbrace)}.
\end{align*}
Then, for all $t\ge1$, the projections $\Pi_{\mbS^n_+}[\cdot]$ in Algorithm \ref{alg:EG} could be replaced with rank-$r$ truncated projections (as defined in \eqref{def:rankRprojection}) without changing the sequences $\lbrace(\X_t,\y_t)\rbrace_{t\ge1}$ and $\lbrace(\Z_t,\w_t)\rbrace_{t\ge2}$. Moreover, for all $T\ge0$ it holds that
%\textcolor{red}{
\begin{align*}
& f\left(\frac{1}{T}\sum_{t=1}^T\Z_{t+1}\right)-f(\X^*)\le R^2/(2\eta T),
\\ & \left\Vert\mA\left(\frac{1}{T}\sum_{t=1}^T\Z_{t+1}\right)-\b\right\Vert_2\le R^2/(2\Vert\y^*\Vert_2\eta T),
\end{align*}
where 
$R^2:=\Vert\X_1-\X^*\Vert_F^2+\max\limits_{\y\in\lbrace\y\in\reals^m\ \vert\ \Vert\y\Vert_2\le2\Vert\y^*\Vert_2\rbrace}\Vert\y_1-\y\Vert_2^2$.
%}

Furthermore, if we take a step-size $\eta=\frac{1}{2\sqrt{\beta^2+\Vert\mA\Vert^2}}$, then for all $T\ge0$ it holds that
\begin{align*}
& f(\Z_{T+1})-f(\X^*)\le R_1/(\eta\sqrt{T}),
\\ & \Vert\mA(\Z_{T+1})-\b\Vert_2\le R_1/(\Vert\y^*\Vert_2\eta\sqrt{T}),
\end{align*}
where $R_1:=3\sqrt{2}\max\lbrace 2R_0(r),3\Vert\y^*\Vert_2\rbrace\Vert(\X_1,\y_1)-(\X^*,\y^*)\Vert$.
\end{theorem}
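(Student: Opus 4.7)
The plan is to treat the saddle-point map $F(\X,\y):=(\nabla_\X\mL(\X,\y),-\nabla_\y\mL(\X,\y))$ as a monotone and Lipschitz operator on the product space $\mbS^n\times\reals^m$, and to combine the classical stability and convergence analysis of Extragradient for monotone variational inequalities with the local low-rank projection mechanism supplied by \cref{lemma:radius}. A direct expansion using the $\beta$-smoothness of $f$ and the operator norm $\Vert\mA\Vert$ gives $\Vert F(\X,\y)-F(\X',\y')\Vert^2 \le (2\beta^2+\Vert\mA\Vert^2)\Vert\X-\X'\Vert_F^2 + 2\Vert\mA\Vert^2\Vert\y-\y'\Vert_2^2$, so $F$ is Lipschitz with constant $L=\sqrt{2(\beta^2+\Vert\mA\Vert^2)}$, and the step-size hypothesis $\eta \le 1/L$ places us in the standard stable regime.

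The core of the argument is an inductive invariant: for every $t\ge 1$ we should have $\Vert(\X_t,\y_t)-(\X^*,\y^*)\Vert\le R_0(r)$ and $\Vert(\Z_{t+1},\w_{t+1})-(\X^*,\y^*)\Vert\le 2R_0(r)$. The first bound follows from the classical Extragradient descent inequality $\Vert(\X_{t+1},\y_{t+1})-(\X^*,\y^*)\Vert^2 \le \Vert(\X_t,\y_t)-(\X^*,\y^*)\Vert^2-(1-\eta^2 L^2)\Vert(\X_t,\y_t)-(\Z_{t+1},\w_{t+1})\Vert^2$, which uses monotonicity of $F$ and the fact that $(\X^*,\y^*)$ is a fixed point of the associated projected step. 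The second bound follows from nonexpansiveness of the projection onto $\mbS^n_+\times\reals^m$ (identity on the $\y$ block) and Lipschitzness of $F$, giving $\Vert(\Z_{t+1},\w_{t+1})-(\X^*,\y^*)\Vert \le (1+\eta L)\Vert(\X_t,\y_t)-(\X^*,\y^*)\Vert \le 2R_0(r)$. Plugging these into \cref{lemma:radius}, for each of the two projections performed in iteration $t$ the max on the left-hand side of \eqref{ineq:boundRadiusLemma} is bounded by $2R_0(r)$, which by definition of $R_0(r)$ equals the right-hand side; hence both exact projections coincide with their rank-$r$ truncated counterparts, and the induction closes. The main technical obstacle lies exactly here: the extrapolated iterates are only controlled in the enlarged $2R_0(r)$ ball, so the factor $1/2$ in the denominator of $R_0(r)$ is precisely what is needed to absorb the $(1+\eta L)\le 2$ blow-up; choosing $R_0(r)$ any larger breaks the induction.

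For the convergence rates I would invoke the classical Extragradient gap bound (see \cite{NemirovskiEG,extragradientK}) which, for every $(\X,\y)\in\mbS^n_+\times\reals^m$, yields
\[
\mL(\bar\Z_{T+1},\y)-\mL(\X,\bar\Z_{T+1}) \le \frac{\Vert\X_1-\X\Vert_F^2+\Vert\y_1-\y\Vert_2^2}{2\eta T},\qquad \bar\Z_{T+1}:=\frac{1}{T}\sum_{t=1}^T \Z_{t+1}.
\]
Setting $\X=\X^*$ and taking the supremum of the left-hand side over $\{\y:\Vert\y\Vert_2\le 2\Vert\y^*\Vert_2\}$, while bounding the right-hand side by $R^2/(2\eta T)$, \cref{lemma:boundf_and_norm} converts this into the stated $\mathcal{O}(1/T)$ rates on $f(\bar\Z_{T+1})-f(\X^*)$ and $\Vert\mA(\bar\Z_{T+1})-\b\Vert_2$. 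For the $\mathcal{O}(1/\sqrt T)$ last-iterate bound I would import the recent last-iterate decrease theorem of \cite{lastIterateDecrease} for Extragradient on monotone Lipschitz variational inequalities, which provides an $O(1/\sqrt T)$ bound on a restricted gap functional at $(\Z_{T+1},\w_{T+1})$ in terms of the initial distance $\Vert(\X_1,\y_1)-(\X^*,\y^*)\Vert$ and the diameter of the test set; specializing this diameter to $\max\{2R_0(r),3\Vert\y^*\Vert_2\}$ produces the constant $R_1$, and a final application of \cref{lemma:boundf_and_norm} yields the claimed last-iterate rates.
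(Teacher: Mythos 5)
Your proposal is correct and follows essentially the same route as the paper's proof: the same Lipschitz constant $\sqrt{2(\beta^2+\Vert\mA\Vert^2)}$, the same nonexpansiveness-plus-$(1+\eta L)\le 2$ argument that keeps $(\X_t,\y_t)$ within $R_0(r)$ and $(\Z_{t+1},\w_{t+1})$ within $2R_0(r)$ so that \cref{lemma:radius} applies, the ergodic extragradient gap bound combined with \cref{lemma:boundf_and_norm} for the $\mathcal{O}(1/T)$ rates, and the restricted-gap last-iterate theorem of \cite{lastIterateDecrease} over the ball of radius $\max\lbrace 2R_0(r),3\Vert\y^*\Vert_2\rbrace$ for the $\mathcal{O}(1/\sqrt{T})$ rates. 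Only a typo to fix: in your averaged gap bound the second argument should be the averaged dual iterate $\bar{\w}_{T+1}$, not $\bar{\Z}_{T+1}$.
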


\begin{proof}
To invoke the convergence results of the projected extragradient method obtained in previous papers, we denote with $\beta_X,\beta_y,\beta_{Xy},\beta_{yX}\ge0$ the constants such that for any $\X,\widetilde{\X}\succeq0$ and $\y,\widetilde{\y}\in\reals^m$ the following four inequalities hold:
\begin{align} \label{ineq:betas}
& \Vert\nabla_{\X}\mL(\X,\y)-\nabla_{\X}\mL(\widetilde{\X},\y)\Vert_{F} \le\beta_{X}\Vert\X-\widetilde{\X}\Vert_{F}, \nonumber \\
&\Vert\nabla_{\y}\mL(\X,\y)-\nabla_{\y}\mL(\X,\widetilde{\y})\Vert_{2} \le\beta_{y}\Vert\y-\widetilde{\y}\Vert_{2}, \nonumber \\
& \Vert\nabla_{\X}\mL(\X,\y)-\nabla_{\X}\mL(\X,\widetilde{\y})\Vert_{F} \le\beta_{Xy}\Vert\y-\widetilde{\y}\Vert_{2}, \nonumber \\
& \Vert\nabla_{\y}\mL(\X,\y)-\nabla_{\y}\mL(\widetilde{\X},\y)\Vert_{2} \le\beta_{yX}\Vert\X-\widetilde{\X}\Vert_{F},
\end{align}
and we denote by $\beta_{\mL}$ the full Lipschitz parameter of the gradient of $\mL$, that is for any $\X,\widetilde{\X}\succeq0$ and $\y,\widetilde{\y}\in\reals^m$, 
\begin{align*}
\Vert(\nabla_{\X}\mL(\X,\y),-\nabla_{\y}\mL(\X,\y))-(\nabla_{\X}\mL(\widetilde{\X},\widetilde{\y}),-\nabla_{\y}\mL(\widetilde{\X},\widetilde{\y}))\Vert \le\beta_{\mL}\Vert(\X,\Y)-(\widetilde{\X},\widetilde{\y})\Vert.
\end{align*}
As shown in \cite{ourExtragradient}, the relationship between $\beta_{\mL}$ and $\beta_X,\beta_y,\beta_{Xy},\beta_{yX}$ is given by $\beta_{\mL} = \sqrt{2}\max\lbrace\sqrt{\beta_X^2+\beta_{yX}^2},\sqrt{\beta_y^2+\beta_{Xy}^2}\rbrace$.
By the definition of $\mL$ it is simple to see that $\beta_{X} = \beta$, $\beta_{Xy}=\beta_{yX}=\Vert\mA\Vert$, and $\beta_y=0$. Therefore, we have that $\beta_{\mL}=\sqrt{2}\sqrt{\beta^2+\Vert\mA\Vert^2}$.

Assuming $\eta\le 1/\beta_{\mL}$, by invoking Lemma 8 in \cite{ourExtragradient}, we have that for all $t\ge2$ the iterates of the projected extragradient method satisfy that
\begin{align*}
\Vert(\X_t,\y_t)-(\X^*,\y^*)\Vert\le \Vert(\X_{t-1},\y_{t-1})-(\X^*,\y^*)\Vert,
\end{align*}
and from the nonexpansiveness of the Euclidean projection, we have that for all $t\ge1$
\begin{align*}
\Vert(\Z_{t+1},\w_{t+1})-(\X^*,\y^*)\Vert & \le \left\Vert\left(\begin{array}{c}\X_{t}-\eta\nabla_{\X}\mL(\X_{t},\y_{t})
\\ \y_{t}+\eta\nabla_{\y}\mL(\X_{t},\y_{t})\end{array}\right)-\left(\begin{array}{c}\X^*-\eta\nabla_{\X}\mL(\X^*,\y^*)
\\ \y^*+\eta\nabla_{\y}\mL(\X^*,\y^*)
\end{array}\right)\right\Vert
\\ & \le \left\Vert\left(\begin{array}{c}\X_{t} \\ \y_{t} \end{array}\right)-\left(\begin{array}{c}\X^* \\ \y^*\end{array}\right)\right\Vert 
+\eta\left\Vert\left(\begin{array}{c}\nabla_{\X}\mL(\X_{t},\y_{t}) \\ -\nabla_{\y}\mL(\X_{t},\y_{t}) \end{array}\right)-\left(\begin{array}{c}\nabla_{\X}\mL(\X^*,\y^*) \\ -\nabla_{\y}\mL(\X^*,\y^*) \end{array}\right)\right\Vert
\\ & \le (1+\eta\beta_{\mL})\Vert(\X_{t},\y_{t})-(\X^*,\y^*)\Vert
\\ & \le 2\Vert(\X_{t},\y_{t})-(\X^*,\y^*)\Vert.
\end{align*}
%
%and for all $t\ge1$ and our choice of $\eta$ they satisfy that
%\begin{align*}
%\Vert(\Z_{t+1},\w_{t+1})-(\X^*,\y^*)\Vert & \le  \left(1+\frac{1}{\sqrt{1-\eta^2\beta_{\mL}^2}}\right)\Vert(\X_{t},\y_{t})-(\X^*,\y^*)\Vert
%\\ &  = (1+\sqrt{2})\Vert(\X_{t},\y_{t})-(\X^*,\y^*)\Vert.
%\end{align*}
Unrolling the recursion and using our initialization choice of $(\X_1,\y_1)$, we obtain that for all $t\ge1$, 
\begin{align*}
\max\left\lbrace\Vert(\X_{t},\y_{t})-(\X^*,\y^*)\Vert,\Vert(\Z_{t+1},\w_{t+1})-(\X^*,\y^*)\Vert\right\rbrace 
& \le 2\Vert(\X_{t-1},\y_{t-1})-(\X^*,\y^*)\Vert 
\\ & \le \ldots \le 2\Vert(\X_{1},\y_{1})-(\X^*,\y^*)\Vert\le 2R_0(r).
\end{align*}

Since for all $t\ge1$ it holds that 
\begin{align*}
\Vert\X_{t}-\X^*\Vert & \le \Vert(\X_{t},\y_{t})-(\X^*,\y^*)\Vert \le\max\left\lbrace\Vert(\X_{t},\y_{t})-(\X^*,\y^*)\Vert,\Vert(\Z_{t+1},\w_{t+1})-(\X^*,\y^*)\Vert\right\rbrace,
\end{align*}
we have that for all $t\ge1$ the condition in \cref{lemma:radius} holds for $(\X_{t},\y_{t})$ and $(\Z_{t+1},\w_{t+1})$, and so for all $t\ge1$ it follows that 
\begin{align*}
& \rank(\Pi_{\mbS^n_+}[\X_t-\eta\nabla{}_{\X}\mL(\X_t,\y_t)])\le r
\\ & \rank(\Pi_{\mbS^n_+}[\X_t-\eta\nabla{}_{\X}\mL(\Z_{t+1},\w_{t+1})])\le r.
\end{align*}
Hence, the iterates of projected extragradient method will remain unchanged when replacing all projections onto the PSD cone with their rank-$r$ truncated counterparts, and so, the method will also maintain its original convergence rate.

In the proof of the projected extragradient method (see within proof of Lemma 7 in \cite{ourExtragradient}), the following inequality has been derived for any $\X\succeq0$ and $\y\in\reals^m$ and step-size $\eta\le\min\left\lbrace\frac{1}{\beta_{X}+\beta_{Xy}},\frac{1}{\beta_{y}+\beta_{yX}},\frac{1}{\beta_{X}+\beta_{yX}},\frac{1}{\beta_{y}+\beta_{Xy}}\right\rbrace$:
\begin{align*}
\mL(\Z_{t+1},\y) - \mL(\X,\w_{t+1}) & \le \frac{1}{2\eta}\left(\Vert(\X_t,\y_t)-(\X,\y)\Vert^2-\Vert(\X_{t+1},\y_{t+1})-(\X,\y)\Vert^2\right).
\end{align*}

Denote $\mC=\lbrace\y\in\reals^m\ \vert\ \Vert\y\Vert_2\le2\Vert\y^*\Vert_2\rbrace$. 
%\textcolor{red}{Averaging over $t=1,\ldots,T$ and setting $\X=\X^*$, we obtain that for all $\y\in\mC$ it holds that
\begin{align} \label{ineq:inProof1111}
\frac{1}{T}\sum_{t=1}^T \mL(\Z_{t+1},\y) - \frac{1}{T}\sum_{t=1}^T \mL(\X^*,\w_{t+1}) 
& \le \frac{1}{2\eta T}\max_{\y\in\mC}\Vert(\X_1,\y_1)-(\X^*,\y)\Vert^2 \nonumber
\\ & = \frac{1}{2\eta T}\left(\Vert\X_1-\X^*\Vert_F^2+\max_{\y\in\mC}\Vert\y_1-\y\Vert_2^2\right).
\end{align}
By the linearity of $\mL(\X^*,\cdot)$ it follows that $\frac{1}{T}\sum_{t=1}^T \mL(\X^*,\w_{t+1}) = \mL(\X^*,\frac{1}{T}\sum_{t=1}^T\w_{t+1})$ and by the convexity of $\mL(\cdot,y)$ it follows that $\mL\left(\frac{1}{T}\sum_{t=1}^T\Z_{t+1},\y\right)\le\frac{1}{T}\sum_{t=1}^T \mL(\Z_{t+1},\y)$. Therefore, in particular, for all $\y\in\mC$ it holds that
\begin{align} \label{ineq:inProof111} 
\mL\left(\frac{1}{T}\sum_{t=1}^T\Z_{k+1},\y\right) -  \mL\left(\X^*,\frac{1}{T}\sum_{t=1}^T\w_{t+1}\right) 
& \le \frac{1}{2\eta T}\left(\Vert\X_1-\X^*\Vert_F^2+\max_{\y\in\mC}\Vert\y_1-\y\Vert_2^2\right).
\end{align}
%Denoting $\widehat{\X}\in\argmin_{k\in[T]} \max_{\y\in\mC} \mL(\Z_{k+1},\y)$ we have that for every $\y\in\mC$ it holds that
%\begin{align*}
%\mL(\widehat{\X},\y) -  \mL(\X^*,\frac{1}{T}\sum_{t=1}^T\w_{t+1}) 
%& \le \frac{1}{2\eta T}\left(\Vert\X_1-\X^*\Vert_F^2+\max_{\y\in\mC}\Vert\y_1-\y\Vert_2^2\right).
%\end{align*}
Invoking \cref{lemma:boundf_and_norm} we have that as desired
\begin{align*}
f\left(\frac{1}{T}\sum_{t=1}^T\Z_{k+1}\right)-f(\X^*)\le \frac{1}{2\eta T}\left(\Vert\X_1-\X^*\Vert_F^2+\max_{\y\in\mC}\Vert\y_1-\y\Vert_2^2\right)
\end{align*}
and 
\begin{align*}
\left\Vert\mA\left(\frac{1}{T}\sum_{t=1}^T\Z_{k+1}\right)-\b\right\Vert_2\le\frac{1}{2\eta T}\frac{\left(\Vert\X_1-\X^*\Vert_F^2+\max_{\y\in\mC}\Vert\y_1-\y\Vert_2^2\right)}{\Vert\y^*\Vert_2}.
\end{align*}%}

Taking a step-size $\eta\le\frac{1}{\sqrt{2}\sqrt{\beta^2+\Vert\mA\Vert^2}}$ satisfies 
both conditions $\eta\le\min\left\lbrace\frac{1}{\beta_{X}+\beta_{Xy}},\frac{1}{\beta_{y}+\beta_{yX}},\frac{1}{\beta_{X}+\beta_{yX}},\frac{1}{\beta_{y}+\beta_{Xy}}\right\rbrace$ and $\eta\le\frac{1}{\beta_{\mL}}$ as required.

For the second part of the theorem, 
denote $\mD=\left\lbrace(\X,\y)\in\mathbb{S}^n_+\times\reals^m~|~\Vert(\X,\y)-(\X^*,\y^*)\Vert\le \max\lbrace 2R_0(r),3\Vert\y^*\Vert_2\rbrace\right\rbrace$. As we showed above, all iterates are within the ball $\mD$. By the convexity of $\mL(\cdot,\w_{T+1})$ and linearity of $\mL(\Z_{T+1},\cdot)$, for all $(\X,\y)\in\mD$
%$\X\in\mathbb{S}^n_+\cap\mD_{X}$ and $\y\in\mD_{y}$, 
it holds that
\begin{align} \label{ineq:inProof000}
\mL(\Z_{T+1},\w_{T+1})-\mL(\X,\w_{T+1}) & \le \langle\Z_{T+1}-\X,\nabla_{\X}\mL(\Z_{T+1},\w_{T+1})\rangle \nonumber
\\ \mL(\Z_{T+1},\y)-\mL(\Z_{T+1},\w_{T+1}) & = \langle\w_{T+1}-\y,-\nabla_{\y}\mL(\Z_{T+1},\w_{T+1})\rangle.
\end{align}
For all $\y\in\mC$ it holds that $(\X^*,\y)\in\mD$ since $\Vert(\X^*,\y)-(\X^*,\y^*)\Vert = \Vert\y-\y^*\Vert_2\le\Vert\y\Vert_2+\Vert\y^*\Vert_2\le3\Vert\y^*\Vert_2$. Therefore, summing the two inequalities in \eqref{ineq:inProof000} and taking the maximum over all  $(\X',\y')\in\mD$ we obtain that in particular for all $\y\in\mC$ it holds that
\begin{align*}
\mL(\Z_{T+1},\y)-\mL(\X^*,\w_{T+1}) & \le \max_{(\X',\y')\in\mD}\left( \langle\Z_{T+1}-\X',\nabla_{\X}\mL(\Z_{T+1},\w_{T+1})\rangle + \langle\w_{T+1}-\y',-\nabla_{\y}\mL(\Z_{T+1},\w_{T+1})\rangle\right)
\\ & \underset{(a)}{\le} \frac{1}{\sqrt{T}}\frac{3\max\lbrace 2R_0(r),3\Vert\y^*\Vert_2\rbrace\Vert(\X_1,\y_1)-(\X^*,\y^*)\Vert}{\eta\sqrt{1-\eta^2\beta_{\mL}^2}}
\\ & \underset{(b)}{\le} \frac{1}{\sqrt{T}}\frac{3\sqrt{2}\max\lbrace 2R_0(r),3\Vert\y^*\Vert_2\rbrace\Vert(\X_1,\y_1)-(\X^*,\y^*)\Vert}{\eta}.
\end{align*}
where (a) follows from Theorem 6 in \cite{lastIterateDecrease} and (b) follows from our choice of step-size which satisfies that $\eta\le 1/(\sqrt{2}\beta_{\mL})$. Invoking \cref{lemma:boundf_and_norm} we obtain the result.

\end{proof}

\section{Theorem and Proof Omitted from \cref{sec:Augmented}}
\label{appx:thmAugmented}

In this section we denote the augmented Lagrangian as $\mL_{\mu}(\X,\y):=f(\X)+\y^{\top}(\b-\mA(\X))+\frac{\mu}{2}\Vert\mA(\X)-\b\Vert^2_2$ for any $\mu\ge0$. We will now state the full theorem and prove it.

\begin{theorem} \label{thm:SPD_aug}
Fix an optimal solution $\X^*\succeq0$ to Problem \eqref{mainProblemSDP}. Let $\y^*\in\reals^m$ be a corresponding dual solution and suppose $(\X^*,\y^*)$ satisfies the complementarity condition \eqref{eq:compcond} with parameter $\tilde{r}:=n-\rank(\nabla_{\X}\mL(\X^*,\y^*))$.
Let $\lbrace(\X_t,\y_t)\rbrace_{t\ge1}$ and $\lbrace(\Z_t,\w_t)\rbrace_{t\ge2}$ be the sequences of iterates generated by the projected extragradient method as in Algorithm \ref{alg:EG} with a fixed step-size
$\eta \le \frac{1}{\sqrt{2}\sqrt{(\beta+\mu\Vert\mA\Vert^2)^2+\Vert\mA\Vert^2}}$. Fix some $r\ge\tilde{r}$ and assume the initialization $(\X_1,\y_1)$ satisfies that $\Vert(\X_1,\y_1)-(\X^*,\y^*)\Vert_F\le R_0(r)$ where
\begin{align*}
& R_0(r):=
\frac{\eta\max\left\lbrace\lambda_{n-r}(\nabla_{\X}\mL(\X^*,\y^*)),\sqrt{\tilde{r}}\lambda_{n-r+\tilde{r}-1}(\nabla_{\X}\mL(\X^*,\y^*))\right\rbrace}{2(1+\sqrt{2}\eta\max\lbrace\beta+\mu\Vert\mA\Vert^2,\Vert\mA\Vert\rbrace)}.
\end{align*}
Then, for all $t\ge1$ the projections $\Pi_{\mbS^n_+}[\cdot]$ in Algorithm \ref{alg:EG} could be replaced with rank-r truncated projections without changing the sequences $\lbrace(\X_t,\y_t)\rbrace_{t\ge1}$ and $\lbrace(\Z_t,\w_t)\rbrace_{t\ge2}$. Moreover, for all $T\ge0$ it holds that
\begin{align*}
& f\left(\frac{1}{T}\sum_{t=1}^T\Z_{k+1}\right)-f(\X^*)\le R^2/(2\eta T),
\\ & \left\Vert\mA\left(\frac{1}{T}\sum_{t=1}^T\Z_{k+1}\right)-\b\right\Vert_2\le R^2/(2(\Vert\y^*\Vert_2+\mu/2)\eta T),
\end{align*}
where $R^2:=\Vert\X_1-\X^*\Vert_F^2+\max_{\y\in\lbrace\y\in\reals^m\ \vert\ \Vert\y\Vert_2\le2\Vert\y^*\Vert_2\rbrace}\Vert\y_1-\y\Vert_2^2$.

Furthermore, if we take a step-size $\eta=\frac{1}{2\sqrt{(\beta+\mu\Vert\mA\Vert^2)^2+\Vert\mA\Vert^2}}$ , then for all $T\ge0$ it holds that
\begin{align*}
& f(\Z_{T+1})-f(\X^*)\le R_1/(\eta\sqrt{T}),
\\ & \Vert\mA(\Z_{T+1})-\b\Vert_2\le R_1/((\Vert\y^*\Vert_2+\mu/2)\eta\sqrt{T}),
\end{align*}
where $R_1:=3\sqrt{2}\max\lbrace 2R_0(r),3\Vert\y^*\Vert_2\rbrace\Vert(\X_1,\y_1)-(\X^*,\y^*)\Vert$.

%Then, for all $t\ge1$ the projections $\Pi_{\lbrace \X\succeq0\rbrace}[\cdot]$ could be replaced with rank-r truncated projections without changing the sequences $\lbrace(\X_t,\y_t)\rbrace_{t\ge1}$ and $\lbrace(\Z_t,\w_t)\rbrace_{t\ge2}$, and for all $T\ge0$ it holds that
%\begin{align*}
%& f(\widehat{\X})-f(\X^*)\le \frac{\Vert\X_1-\X^*\Vert_F^2+\max_{\y\in\lbrace\y\in\reals^m\ \vert\ \Vert\y\Vert_2\le2\Vert\y^*\Vert_2\rbrace}\Vert\y_1-\y\Vert_2^2}{2\eta T},
%\\ & \Vert\mA(\widehat{\X})-\b\Vert_2\le \frac{\Vert\X_1-\X^*\Vert_F^2+\max_{\y\in\lbrace\y\in\reals^m\ \vert\ \Vert\y\Vert_2\le2\Vert\y^*\Vert_2\rbrace}\Vert\y_1-\y\Vert_2^2}{2(\Vert\y^*\Vert_2+\rho/2)\eta T},
%\end{align*}
%where $\widehat{\X}\in\argmin_{t\in[T]} \max_{\y\in\lbrace\y\in\reals^m\ \vert\ \Vert\y\Vert_2\le2\Vert\y^*\Vert_2\rbrace} \mL_{\rho}(\Z_{t+1},\y)$.
\end{theorem}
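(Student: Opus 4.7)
The strategy is to recognize that Theorem \ref{thm:SPD_aug} is not a new algorithmic analysis but a direct consequence of Theorem \ref{thm:SDP} applied to a modified primal problem. Define $\tilde{f}(\X) := f(\X) + \frac{\mu}{2}\Vert\mA(\X)-\b\Vert_2^2$; the augmented saddle-point formulation \eqref{problem:augmented} is then exactly the standard Lagrangian associated with $\min_{\X\succeq 0,\,\mA(\X)=\b}\tilde{f}(\X)$. The function $\tilde{f}$ is convex, and its gradient is Lipschitz with constant $\beta+\mu\Vert\mA\Vert^2$, since the penalty contributes a Hessian $\mu\mA^{\top}\mA$ of operator norm at most $\mu\Vert\mA\Vert^2$. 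This already explains why the step-size restriction and the denominator of $R_0(r)$ in Theorem \ref{thm:SPD_aug} arise from their counterparts in Theorem \ref{thm:SDP} by replacing $\beta$ with $\beta+\mu\Vert\mA\Vert^2$.

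The first key observation to verify is that any optimal primal--dual pair $(\X^*,\y^*)$ of \eqref{mainProblemSDP} is also a saddle-point of the augmented Lagrangian with the \emph{same} $\X$-gradient at the optimum. Primal feasibility $\mA(\X^*)=\b$ makes the extra penalty term vanish, so
\begin{align*}
\nabla_{\X}\mL_{\mu}(\X^*,\y^*) = \nabla f(\X^*) + \mu\mA^{\top}(\mA(\X^*)-\b) - \mA^{\top}(\y^*) = \nabla_{\X}\mL(\X^*,\y^*).
\end{align*}
Consequently, the complementarity parameter $\tilde{r}$ and the eigenvalues appearing in $R_{0}(r)$ are unchanged. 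Invoking Lemma \ref{lemma:radius} (whose proof only uses smoothness in $\X$ and complementarity of the saddle-point) and then Theorem \ref{thm:SDP} with $f\to\tilde{f}$ and $\beta\to\beta+\mu\Vert\mA\Vert^2$ yields the low-rank projection property for Algorithm \ref{alg:EG} on the augmented Lagrangian, as well as the $\mathcal{O}(1/T)$ and $\mathcal{O}(1/\sqrt{T})$ convergence guarantees at the level of $\tilde{f}$ and its associated saddle-gap.

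The final step is to translate $\tilde{f}$-bounds back to the stated quantities. Since $\tilde{f}(\X^*)=f(\X^*)$ (by feasibility of $\X^*$) and $\tilde{f}(\bar{\X})\geq f(\bar{\X})$ (nonnegativity of the penalty), the objective bound $f(\bar{\X})-f(\X^*)\leq R^2/(2\eta T)$ is inherited directly. For the sharper constraint-violation bound, one reruns the proof of Lemma \ref{lemma:boundf_and_norm} inside the augmented Lagrangian: substituting $\y = 2\Vert\y^*\Vert_2(\b-\mA(\bar{\X}))/\Vert\mA(\bar{\X})-\b\Vert_2$ into the approximate saddle-point inequality and using the dual lower bound $f(\bar{\X})-f(\X^*)\geq -\Vert\y^*\Vert_2\Vert\mA(\bar{\X})-\b\Vert_2$, the penalty term contributes an additional $\frac{\mu}{2}\Vert\mA(\bar{\X})-\b\Vert_2^2$ on the left-hand side, producing an inequality of the form $\Vert\mA(\bar{\X})-\b\Vert_2\bigl(\Vert\y^*\Vert_2+\tfrac{\mu}{2}\Vert\mA(\bar{\X})-\b\Vert_2\bigr)\leq R^{2}/(2\eta T)$. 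This extra term is precisely what upgrades the effective denominator from $\Vert\y^*\Vert_2$ to $\Vert\y^*\Vert_2+\mu/2$, and the last-iterate bound follows analogously using the same $\mathcal{O}(1/\sqrt{T})$ argument (Theorem 6 of \cite{lastIterateDecrease}) specialized to the augmented Lagrangian.

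The main obstacle I anticipate is this final translation: the quadratic inequality in $\Vert\mA(\bar{\X})-\b\Vert_2$ does not algebraically collapse to the stated explicit bound in one line, so care is needed either to bound the quadratic term directly or to absorb it into the denominator via an appropriate case split (i.e., separating the regimes where $\frac{\mu}{2}\Vert\mA(\bar{\X})-\b\Vert_2$ dominates $\Vert\y^*\Vert_2$ or vice versa). Everything else is a mechanical re-indexing of Theorem \ref{thm:SDP}.
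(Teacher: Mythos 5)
Your proposal matches the paper's own proof: the paper derives \cref{thm:SPD_aug} exactly by applying \cref{thm:SDP} with $f$ replaced by $f(\X)+(\mu/2)\Vert\mA(\X)-\b\Vert_2^2$ (so $\beta_X$ becomes $\beta+\mu\Vert\mA\Vert^2$ while the other smoothness constants and, by feasibility of $\X^*$, the gradient $\nabla_{\X}\mL_\mu(\X^*,\y^*)=\nabla_{\X}\mL(\X^*,\y^*)$ are unchanged), and by rerunning \cref{lemma:boundf_and_norm} with the extra penalty term so that the denominator becomes $\Vert\y^*\Vert_2+\mu/2$. The only step you flag as delicate --- collapsing the resulting inequality $\Vert\mA(\bar{\X})-\b\Vert_2\bigl(\Vert\y^*\Vert_2+\tfrac{\mu}{2}\Vert\mA(\bar{\X})-\b\Vert_2\bigr)\le\varepsilon$ to the stated bound --- is precisely the point the paper itself dispatches in one line (asserting $\alpha$ is simply replaced by $\alpha+\mu/2$), so your treatment is, if anything, more careful than the published argument.
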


\begin{proof}
The theorem follows from Theorem \ref{thm:SDP} by replacing the function $f$ with $f(\X)+(\mu/2)\Vert\mA(\X)-\b\Vert_2^2$. 
This causes the smoothness parameter $\beta_X$ to change to $\beta_X=\beta + \mu\Vert\mA\Vert^2$ while the rest of the smoothness parameters remain unchanged. 

The one other difference is in inequality \eqref{ineq:func&normBound} in the proof of Lemma \ref{lemma:boundf_and_norm} where there is an additional term of $(\mu/2)\Vert\mA(\X)-\b\Vert_2^2$. This causes $\alpha$ in inequality \eqref{ineq:func&normBound_1} to be replaced with $\alpha+\mu/2$. Choosing $\alpha=2\Vert\y^*\Vert_2$ as we chose in the proof of Lemma \ref{lemma:boundf_and_norm}, we will obtain a that the denominator of the bound on $\Vert\mA(\widehat{\X})-\b\Vert_2$ increases to $\Vert\y^*\Vert_2+\mu/2$ as in the theorem.
\end{proof}

\section{Proofs Omitted from \cref{sec:nonsmoothReg}}

To prove \cref{thm:nonsmoothRegularizer} we first need to prove several helpful lemmas.

The following lemma extends  \cref{lemma:range_null} to the setting of Problem \eqref{problem:nonsmoothRegularizersSP} where there is an additional dual variable $\blambda$. 
\begin{lemma} \label{lemma:range_null_nonsmooth}
Let $(\X^*,\y^*,\blambda^*)$ be a saddle-point of Problem \eqref{problem:nonsmoothRegularizer} and assume Assumptions \ref{Ass:slater} and \ref{Ass:primalBounded} hold. Then, $\nabla_{\X}\mL(\X^*,\y^*,\blambda^*)=\nabla{}f(\X^*)-\mA^{\top}(\y^*)+\nabla_{\X}G(\X^*,\blambda^*)\succeq0$ and $\range(\X^*)\subseteq\nullspace(\nabla_{\X}\mL(\X^*,\y^*,\blambda^*))$.
\end{lemma}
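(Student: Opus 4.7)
The plan is to mirror the proof of Lemma \ref{lemma:range_null} almost verbatim, since the only change from the smooth SDP setting is the presence of the extra term $\nabla_{\X}G(\X^*,\blambda^*)$ inside $\nabla_{\X}\mL^*$; the structure of the argument, driven by the PSD conic constraint on $\X$, is insensitive to what precisely sits inside the $\X$-gradient of the Lagrangian.

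First I would exploit the saddle-point characterization: $(\X^*,\y^*,\blambda^*)$ being a saddle-point of \eqref{problem:nonsmoothRegularizersSP} implies that $\X^*$ is a minimizer of the convex function $\X \mapsto \mL(\X,\y^*,\blambda^*)$ over the PSD cone $\mbS^n_+$. Assumptions \ref{Ass:slater} and \ref{Ass:primalBounded} ensure strong duality and attainment of an optimal PSD multiplier $\Z^* \in \mbS^n_+$ for the conic constraint. The KKT conditions for this inner minimization read
\begin{align*}
\nabla_{\X}\mL(\X^*,\y^*,\blambda^*) - \Z^* &= 0, \\
\langle \Z^*, \X^* \rangle &= 0, \\
\X^*, \Z^* &\succeq 0.
\end{align*}
From the stationarity condition I immediately get $\nabla_{\X}\mL(\X^*,\y^*,\blambda^*) = \Z^* \succeq 0$, which is the first claim of the lemma.

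Next, substituting this identity into the complementary slackness condition yields $\langle \nabla_{\X}\mL(\X^*,\y^*,\blambda^*), \X^* \rangle = 0$. Since both matrices are symmetric PSD, a standard fact about the trace inner product on $\mbS^n_+$ forces the matrix product to vanish: $\nabla_{\X}\mL(\X^*,\y^*,\blambda^*)\,\X^* = 0$. This directly gives $\range(\X^*) \subseteq \nullspace(\nabla_{\X}\mL(\X^*,\y^*,\blambda^*))$, the second claim.

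I do not foresee a real obstacle; the only thing to be careful about is to apply strong duality to the inner PSD-constrained minimization at fixed $(\y^*,\blambda^*)$ rather than to the full saddle-point problem, so that Slater's condition on the affine constraints $\mA(\bar\X)=\b$ with $\bar\X \succ 0$ is precisely what is needed for KKT. The smoothness of $G(\cdot,\blambda^*)$ (guaranteed by the assumptions in \eqref{ineq:betasNonsmoothRegGpart}) ensures $\nabla_{\X}G(\X^*,\blambda^*)$ is well-defined, so the extension to the nonsmooth-in-$\X$ setting of \eqref{problem:nonsmoothRegularizer} via the smooth representation $g(\X) = \max_{\blambda \in \mK} G(\X,\blambda)$ is transparent.
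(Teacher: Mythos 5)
Your proposal is correct and follows essentially the same route as the paper: both establish the conic KKT conditions $\nabla_{\X}\mL(\X^*,\y^*,\blambda^*)=\Z^*\succeq0$ and $\langle\Z^*,\X^*\rangle=0$, and then use the fact that two PSD matrices with zero trace inner product have zero matrix product to conclude $\range(\X^*)\subseteq\nullspace(\nabla_{\X}\mL(\X^*,\y^*,\blambda^*))$. The only cosmetic difference is that the paper invokes strong duality and the KKT system of the full problem (where the affine Slater condition of Assumption \ref{Ass:slater} is the relevant hypothesis), whereas you apply first-order optimality to the inner minimization over $\X\succeq0$ at the fixed pair $(\y^*,\blambda^*)$ — for which, note, strict feasibility of the PSD cone constraint (rather than the affine Slater condition you cite) is what justifies the multiplier $\Z^*$; either way the argument is sound.
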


\begin{proof}
%Denote the Lagrangian of Problem \eqref{problem:ineqConstrained} as $\mL(\X,\y,\blambda,\Z)=f(\X) +\y^{\top}(\b-\mA(\X))+\blambda^{\top}g(\X)-\langle\Z,\X\rangle.$ 
Under Assumptions \ref{Ass:slater} and \ref{Ass:primalBounded}, strong duality holds for Problem \eqref{problem:generalSDPprimal}. This implies that for the optimal solution $\X^*$ and some optimal dual solution $(\y^*,\blambda^*,\Z^*)$ the KKT conditions for Problem \eqref{problem:generalSDPprimal} hold.
Writing the KKT conditions we obtain that
\begin{align*}
& 0=\nabla{}f(\X^*)-\mA^{\top}(\y^*)+\nabla_{\X}G(\X^*,\blambda^*)-\Z^*,
\\ & \langle\Z^*,\X^*\rangle=0,
\\ & \mA(\X^*)=\b,
\\ & \X^*,\Z^*\succeq0,
\\ & \blambda^*\in\mK.
\end{align*} 
In particular, since $\Z^*\succeq0$ we have from the first order optimality condition that $\Z^*=\nabla{}f(\X^*)-\mA^{\top}(\y^*)+\nabla_{\X}G(\X^*,\blambda^*)\succeq0$.

In addition, the fact that $\langle\Z^*,\X^*\rangle=0$ and $\X^*,\Z^*\succeq0$ imply that $\Z^*\X^*=0$. This means that $\range(\X^*)\subseteq\nullspace(\Z^*)$. 
\end{proof}

The updates of the projected extragradient method are with respect to the Lagrangian $\mL(\X,\y,\blambda)$. The following lemma shows how a convergence with respect to the Lagrangian implies that the original nonsmooth problem $f(\X)+g(\X)$ converges while also satisfying the linear constraints.
\begin{lemma} \label{lemma:boundf_and_norm_regularized}
Let $\X^*\succeq0$ be an optimal solution to Problem \eqref{problem:nonsmoothRegularizer} and let $(\widehat{\X},(\widehat{\y},\widehat{\blambda}))\in\mathbb{S}^n_+\times(\reals^m\times\mK)$. Let $(\y^*,\blambda^*)\in\argmax_{\y\in\reals^m,\ \blambda\in\mK}\mL(\X^*,\y,\blambda)$.
Denote $\mC=\lbrace\y\in\reals^m\ \vert\ \Vert\y\Vert_2\le2\Vert\y^*\Vert_2\rbrace$. Assume that for all $(\y,\blambda)\in\mC\times\mK$ 
\begin{align*}
\mL(\widehat{\X},\y,\blambda)-\mL(\X^*,\widehat{\y},\widehat{\blambda})\le\varepsilon.
\end{align*}
Then it holds that
$f(\widehat{\X})+g(\widehat{\X})-f(\X^*)-g(\X^*)\le\varepsilon$ and $\Vert\mA(\widehat{\X})-\b\Vert_2\le\varepsilon/\Vert\y^*\Vert_2$.
\end{lemma}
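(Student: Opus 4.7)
The plan is to closely mirror the proof of \cref{lemma:boundf_and_norm}, treating the new dual variable $\blambda$ via the variational characterization $g(\X)=\max_{\blambda\in\mK}G(\X,\blambda)$, and exploiting the saddle-point structure at $(\X^*,\y^*,\blambda^*)$ together with the feasibility $\mA(\X^*)=\b$.

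First I would fix a maximizer $\blambda_{\widehat{\X}}\in\argmax_{\blambda\in\mK}G(\widehat{\X},\blambda)$, which exists since $\mK$ is compact and $G(\widehat{\X},\cdot)$ is continuous (in fact smooth). Then $G(\widehat{\X},\blambda_{\widehat{\X}})=g(\widehat{\X})$. Next, assuming $\mA(\widehat{\X})\neq\b$ (the case $\mA(\widehat{\X})=\b$ is trivial for the feasibility bound and needs only the $\y=\mathbf{0}$ choice for the objective bound), set $\y=\tfrac{2\Vert\y^*\Vert_2(\b-\mA(\widehat{\X}))}{\Vert\mA(\widehat{\X})-\b\Vert_2}\in\mC$ so that $\y^{\top}(\b-\mA(\widehat{\X}))=2\Vert\y^*\Vert_2\Vert\mA(\widehat{\X})-\b\Vert_2$. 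Plugging these choices into the hypothesis $\mL(\widehat{\X},\y,\blambda)-\mL(\X^*,\widehat{\y},\widehat{\blambda})\le\varepsilon$ gives
\begin{align*}
f(\widehat{\X})+g(\widehat{\X})+2\Vert\y^*\Vert_2\Vert\mA(\widehat{\X})-\b\Vert_2-\mL(\X^*,\widehat{\y},\widehat{\blambda})\le\varepsilon.
\end{align*}

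The key simplification on the right-hand side uses two facts: (i) $\mA(\X^*)=\b$ makes the linear term vanish, so $\mL(\X^*,\widehat{\y},\widehat{\blambda})=f(\X^*)+G(\X^*,\widehat{\blambda})$; and (ii) since $\widehat{\blambda}\in\mK$, we have $G(\X^*,\widehat{\blambda})\le\max_{\blambda\in\mK}G(\X^*,\blambda)=g(\X^*)$. Therefore $\mL(\X^*,\widehat{\y},\widehat{\blambda})\le f(\X^*)+g(\X^*)$, yielding
\begin{align*}
f(\widehat{\X})+g(\widehat{\X})-f(\X^*)-g(\X^*)+2\Vert\y^*\Vert_2\Vert\mA(\widehat{\X})-\b\Vert_2\le\varepsilon.
\end{align*}
Discarding the nonnegative norm term gives the first conclusion.

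For the feasibility bound, I would invoke the saddle-point inequality $\mL(\X^*,\y^*,\blambda^*)\le\mL(\widehat{\X},\y^*,\blambda^*)$. Since $\mL(\X^*,\y^*,\blambda^*)=f(\X^*)+g(\X^*)$ and $G(\widehat{\X},\blambda^*)\le g(\widehat{\X})$, this rearranges to $f(\X^*)+g(\X^*)-f(\widehat{\X})-g(\widehat{\X})\le(\y^*)^{\top}(\b-\mA(\widehat{\X}))\le\Vert\y^*\Vert_2\Vert\mA(\widehat{\X})-\b\Vert_2$ by Cauchy--Schwarz. Substituting this lower bound for the objective gap back into the displayed inequality above cancels one copy of $\Vert\y^*\Vert_2\Vert\mA(\widehat{\X})-\b\Vert_2$ and leaves $\Vert\y^*\Vert_2\Vert\mA(\widehat{\X})-\b\Vert_2\le\varepsilon$, which is the second conclusion.

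I do not foresee a real obstacle here: the argument is essentially identical to that of \cref{lemma:boundf_and_norm}, the only new ingredient being the trivial observation that picking $\blambda=\blambda_{\widehat{\X}}$ on the left and using $G(\X^*,\widehat{\blambda})\le g(\X^*)$ on the right correctly translates the $G$-terms into values of the nonsmooth function $g$ evaluated at $\widehat{\X}$ and $\X^*$ respectively.
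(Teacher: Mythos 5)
Your proposal is correct and follows essentially the same route as the paper's proof: the same test point $\y$ proportional to $\b-\mA(\widehat{\X})$ scaled by $2\Vert\y^*\Vert_2$, the same choice $\blambda\in\argmax_{\blambda\in\mK}G(\widehat{\X},\blambda)$, the bound $G(\X^*,\widehat{\blambda})\le g(\X^*)$, and then the saddle-point inequality $\mL(\X^*,\y^*,\blambda^*)\le\mL(\widehat{\X},\y^*,\blambda^*)$ combined with $G(\widehat{\X},\blambda^*)\le g(\widehat{\X})$ to extract the feasibility bound. The only (harmless) difference is your explicit handling of the degenerate case $\mA(\widehat{\X})=\b$, which the paper leaves implicit.
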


\begin{proof}
For all $(\y,\blambda)\in\mC\times\mK$  it holds that
$\mL(\widehat{\X},\y,\blambda)-\mL(\X^*,\widehat{\y},\widehat{\blambda})\le\varepsilon$.
In particular, denoting $\alpha:=2\Vert\y^*\Vert$ and plugging in $\y=\frac{\alpha(\b-\mA(\widehat{\X}))}{\Vert\mA(\widehat{\X})-\b\Vert_2}\in\mC$ and $\blambda\in\argmax_{\blambda\in\mK}G(\widehat{\X},\blambda)$, we obtain
\begin{align} \label{ineq:func&normBound_regularized}
& f(\widehat{\X})+G(\widehat{\X},\blambda)+\y^{\top}(\b-\mA(\widehat{\X}))-f(\X^*)-G(\X^*,\widehat{\blambda}) \nonumber
\\ & = f(\widehat{\X})+g(\widehat{\X})-f(\X^*)-G(\X^*,\widehat{\blambda})+\alpha\Vert\mA(\widehat{\X})-\b\Vert_2^2\le\varepsilon.
\end{align}

By the relationship between $G$ and $g$ we have that
\begin{align*}
g(\X^*)=\max_{\blambda\in\mK}G(\X^*,\blambda) \ge G(\X^*,\widehat{\blambda}).
\end{align*}
Plugging this into \eqref{ineq:func&normBound_regularized} we obtain that
\begin{align*}
f(\widehat{\X})+g(\widehat{\X})-f(\X^*)-g(\X^*)+\alpha\Vert\mA(\widehat{\X})-\b\Vert_2^2\le\varepsilon.
\end{align*}

Since $\Vert\mA(\widehat{\X})-\b\Vert_2\ge0$ it follows immediately that $f(\widehat{\X})+g(\widehat{\X})-f(\X^*)-g(\X^*)\le\varepsilon$. 
%
%The bound on $\Vert\mA(\widehat{\X})-\b\Vert_2$ follows using the same arguments as in the proof of Lemma \ref{lemma:boundf_and_norm}. 

In addition, since $(\X^*,\y^*,\blambda^*)$ is a saddle-point it holds that 
\begin{align*}
f(\X^*)+g(\X^*) & = \mL(\X^*,\y^*,\blambda^*) \le \mL(\widehat{\X},\y^*,\blambda^*) 
\\ & = f(\widehat{\X}) + {\y^*}^{\top}(\b-\mA(\widehat{\X}))+G(\widehat{\X},\blambda^*)
\\ & \le f(\widehat{\X})+ {\y^*}^{\top}(\b-\mA(\widehat{\X}))+ g(\widehat{\X}).
\end{align*}
Plugging this into \eqref{ineq:func&normBound_regularized}, we obtain that
\begin{align*}
\alpha\Vert\mA(\widehat{\X})-\b\Vert_2 \le \varepsilon+{\y^*}^{\top}(\b-\mA(\widehat{\X})) \le \varepsilon+\Vert\y^*\Vert_2\Vert\mA(\widehat{\X})-\b\Vert_2. 
\end{align*}
Rearranging, we obtain the desired result of $\Vert\mA(\widehat{\X})-\b\Vert_2\le\varepsilon/(\alpha-\Vert\y^*\Vert_2)=\varepsilon/\Vert\y^*\Vert_2$.
\end{proof}

\subsection{Proof of \cref{thm:nonsmoothRegularizer}}
\label{appx:proofThm45}

We first restate the theorem and then prove it. 

\begin{theorem}
Fix an optimal solution $\X^*\succeq0$ to Problem \eqref{problem:nonsmoothRegularizer}. Let $(\y^*,\blambda^*)\in\reals^m\times\mK$ be a corresponding dual solution and suppose $(\X^*,(\y^*,\blambda^*))$ satisfies the complementarity condition \eqref{eq:compcond:NS} with parameter  $\tilde{r}:=n-\rank(\nabla_{\X}\mL(\X^*,\y^*,\blambda^*))$.
Let $\lbrace(\X_t,(\y_t,\blambda_t))\rbrace_{t\ge1}$ and $\lbrace(\Z_t,(\w_t,\bmu_t))\rbrace_{t\ge2}$ be the sequences of iterates generated by  \cref{alg:EGregularized} with a fixed step-size
\begin{align*}
\eta \le \min\left\lbrace\begin{array}{l}\frac{1}{\sqrt{2}\sqrt{(\beta+\rho_{X})^2+\Vert\mA\Vert^2+\rho_{\lambda X}^2}},\frac{1}{\sqrt{2}\sqrt{\rho_{\lambda}^2+2\max\lbrace \Vert\mA\Vert^2,\rho_{X\lambda}^2\rbrace}},
\\ \frac{1}{\beta+\rho_{X}+\sqrt{2}\max\lbrace \Vert\mA\Vert,\rho_{X\lambda}\rbrace},\frac{1}{\rho_{\lambda}+\sqrt{\Vert\mA\Vert^2+\rho_{\lambda X}}}\end{array}\right\rbrace.
\end{align*}
Fix some $r\ge\tilde{r}$ and assume the initialization $(\X_1,(\y_1,\blambda_1))$ satisfies that $\Vert(\X_1,\y_1,\blambda_1)-(\X^*,\y^*,\blambda^*)\Vert_F\le R_0(r)$ where
\begin{align*}
& R_0(r):= \frac{\eta\max\left\lbrace\lambda_{n-r}(\nabla_{\X}\mL(\X^*,\y^*,\blambda^*)),\sqrt{\tilde{r}}\lambda_{n-r+\tilde{r}-1}(\nabla_{\X}\mL(\X^*,\y^*,\blambda^*))\right\rbrace}{2(1+2\eta\max\lbrace\beta+\rho_{X},\Vert\mA\Vert,\rho_{X\lambda}\rbrace)}.
\end{align*}
Then, for all $t\ge1$ the projections $\Pi_{\mbS^n_+}[\cdot]$ in \cref{alg:EGregularized} could be replaced with rank-r truncated projections without changing the sequences $\lbrace(\X_t,(\y_t,\blambda_t))\rbrace_{t\ge1}$ and $\lbrace(\Z_t,(\w_t,\bmu_t))\rbrace_{t\ge2}$. Moreover, and for all $T\ge0$ it holds that
%\textcolor{red}{
\begin{align*}
& f\left(\frac{1}{T}\sum_{t=1}^T\Z_{k+1}\right)+g\left(\frac{1}{T}\sum_{t=1}^T\Z_{k+1}\right)-f(\X^*)-g(\X^*) 
\le R^2/(2\eta T),
\\ & \left\Vert\mA\left(\frac{1}{T}\sum_{t=1}^T\Z_{k+1}\right)-\b\right\Vert_2  \le R^2/(2\Vert\y^*\Vert_2\eta T),
\end{align*}%}
where $R^2=\Vert\X_1-\X^*\Vert_F^2+\max_{\y\in\lbrace\y\in\reals^m\ \vert\ \Vert\y\Vert_2\le2\Vert\y^*\Vert_2\rbrace}\Vert\y_1-\y\Vert_2^2+\max_{\blambda\in\mK}\Vert\blambda_1-\blambda\Vert_2^2$.

Furthermore, if we take a step-size
\begin{align*}
\eta = \min\left\lbrace\begin{array}{l}\frac{1}{2\sqrt{(\beta+\rho_{X})^2+\Vert\mA\Vert^2+\rho_{\lambda X}^2}},
\frac{1}{2\sqrt{\rho_{\lambda}^2+2\max\lbrace \Vert\mA\Vert^2,\rho_{X\lambda}^2\rbrace}},
\\ \frac{1}{\beta+\rho_{X}+\sqrt{2}\max\lbrace \Vert\mA\Vert,\rho_{X\lambda}\rbrace},\frac{1}{\rho_{\lambda}+\sqrt{\Vert\mA\Vert^2+\rho_{\lambda X}}}\end{array}\right\rbrace,
\end{align*}
then for all $T\ge0$ it holds that
\begin{align*}
& f(\Z_{T+1})+g(\Z_{t+1})-f(\X^*)-g(\X^*)\le R_1/(\eta\sqrt{T}),
\\ & \Vert\mA(\Z_{T+1})-\b\Vert_2\le R_1/(\Vert\y^*\Vert_2\eta\sqrt{T}),
\end{align*}
where
$R_1:=3\sqrt{2} \max\lbrace 2R_0(r),3\sqrt{2}\Vert\y^*\Vert_2,\sqrt{2}\cdot\diam(\mK)\rbrace
\Vert(\X_1,\y_1,\blambda_1)-(\X^*,\y^*,\blambda^*)\Vert$.

%
%Then, for all $t\ge1$ the projections $\Pi_{\lbrace \X\succeq0\rbrace}[\cdot]$ could be replaced with rank-r truncated projections without changing the sequences $\lbrace(\X_t,(\y_t,\blambda_t))\rbrace_{t\ge1}$ and $\lbrace(\Z_t,(\w_t,\bmu_t))\rbrace_{t\ge2}$, and for all $T\ge0$ it holds that
%\begin{align*}
%& f(\widehat{\X})+g(\widehat{\X})-f(\X^*)-g(\X^*) 
%\le \frac{\Vert\X_1-\X^*\Vert_F^2+\max\limits_{\y\in\lbrace\y\in\reals^m\ \vert\ \Vert\y\Vert_2\le2\Vert\y^*\Vert_2\rbrace}\Vert\y_1-\y\Vert_2^2+\max\limits_{\blambda\in\mK}\Vert\blambda_1-\blambda\Vert_2^2}{2\eta T},
%\\ & \Vert\mA(\widehat{\X})-\b\Vert_2  \le\frac{\Vert\X_1-\X^*\Vert_F^2+\max_{\y\in\lbrace\y\in\reals^m\ \vert\ \Vert\y\Vert_2\le2\Vert\y^*\Vert_2\rbrace}\Vert\y_1-\y\Vert_2^2+\max\limits_{\blambda\in\mK}\Vert\blambda_1-\blambda\Vert_2^2}{2\Vert\y^*\Vert_2\eta T},
%\end{align*}
%where $\widehat{\X}\in\argmin_{t\in[T]} \max_{(\y,\blambda)\in\lbrace\y\in\reals^m\ \vert\ \Vert\y\Vert_2\le2\Vert\y^*\Vert_2\rbrace\times\mK} \mL(\Z_{t+1},\y,\blambda)$.
\end{theorem}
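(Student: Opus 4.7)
\textbf{Proof proposal for \cref{thm:nonsmoothRegularizer}.} The plan is to mimic the proof of \cref{thm:SDP} (given in \cref{appx:proofThm26}) with the product space $\mbS^n\times\reals^m\times\mK$ in place of $\mbS^n\times\reals^m$, absorbing the additional dual variable $\blambda$ into the nonexpansiveness and convergence arguments. First I would collect the smoothness constants: from \eqref{ineq:betasNonsmoothRegGpart} and the definition of $\mL(\X,\y,\blambda)$, the analogs of the constants $\beta_X,\beta_y,\beta_{X\y},\beta_{\y X}$ used in \cref{appx:proofThm26} are $\beta_X=\beta+\rho_X$, $\beta_{X\y}=\max\{\Vert\mA\Vert,\rho_{X\lambda}\}$ (since $\nabla_\X\mL$ depends on both dual variables), $\beta_{\y X}=\max\{\Vert\mA\Vert,\rho_{\lambda X}\}$, and $\beta_\y$ is set by the pure $\rho_\lambda$ part (the $\y$-block of $\nabla_\y\mL$ is linear in the variables). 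The joint Lipschitz constant $\beta_\mL$ of the monotone operator $(\nabla_\X\mL,-\nabla_\y\mL,-\nabla_\blambda\mL)$ can then be bounded by the same $\sqrt{2}\max\{\sqrt{\beta_X^2+\beta_{\y X}^2},\sqrt{\beta_\y^2+\beta_{X\y}^2}\}$ formula used in \cite{ourExtragradient}, which gives the step-size ceiling stated in the theorem.

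Next I would extend \cref{lemma:radius} to the three-variable setting. \cref{lemma:range_null_nonsmooth} supplies the crucial fact $\range(\X^*)\subseteq\nullspace(\nabla_\X\mL^*)$, so the matrix $\P^*:=\X^*-\eta\nabla_\X\mL^*$ has the same block-eigenvalue structure as in \eqref{ineq:inProofLambdasOpt}. The only change in the perturbation step is that $\Vert\nabla_\X\mL(\Z,\w,\bmu)-\nabla_\X\mL(\X^*,\y^*,\blambda^*)\Vert_F$ is bounded using all three inequalities in \eqref{ineq:betasNonsmoothRegGpart}: by adding and subtracting intermediate points the bound becomes at most $2\max\{\beta+\rho_X,\Vert\mA\Vert,\rho_{X\lambda}\}\cdot\Vert(\Z,\w,\bmu)-(\X^*,\y^*,\blambda^*)\Vert$. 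Repeating the two Weyl-inequality arguments then yields exactly the sufficient condition used to define $R_0(r)$ in the theorem, where $2\eta\max\{\beta+\rho_X,\Vert\mA\Vert,\rho_{X\lambda}\}$ replaces $\sqrt{2}\eta\max\{\beta,\Vert\mA\Vert\}$ in the denominator.

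With this localized truncation lemma in hand, the standard extragradient nonexpansiveness bound (Lemma 8 of \cite{ourExtragradient}) applied on the product space shows $\Vert(\X_t,\y_t,\blambda_t)-(\X^*,\y^*,\blambda^*)\Vert\le R_0(r)$ for all $t$, and the extragradient step itself expands distances by at most a factor $1+\eta\beta_\mL\le 2$, so all $(\Z_{t+1},\w_{t+1},\bmu_{t+1})$ lie within $2R_0(r)$ of the saddle point. Combined with the extended \cref{lemma:radius}, this certifies that the full and rank-$r$ truncated projections coincide at every iteration. For the $\mathcal{O}(1/T)$ average-iterate rate I would invoke the standard one-step extragradient inequality
\begin{align*}
\mL(\Z_{t+1},\y,\blambda)-\mL(\X,\w_{t+1},\bmu_{t+1})\le\frac{1}{2\eta}\bigl(\Vert(\X_t,\y_t,\blambda_t)-(\X,\y,\blambda)\Vert^2-\Vert(\X_{t+1},\y_{t+1},\blambda_{t+1})-(\X,\y,\blambda)\Vert^2\bigr),
\end{align*}
sum, telescope, take $(\X,\y,\blambda)=(\X^*,\y,\blambda)$ with $\y$ in the ball of radius $2\Vert\y^*\Vert_2$ and $\blambda\in\mK$, use linearity of $\mL$ in $(\y,\blambda)$ and convexity in $\X$ to move the averaging inside, and finally apply \cref{lemma:boundf_and_norm_regularized} to convert the resulting saddle-gap bound into the stated objective and feasibility estimates.

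The main obstacle will be bookkeeping the step-size: the product-space nonexpansiveness requires $\eta\le 1/\beta_\mL$, while the one-step inequality requires the four cross-block bounds $\eta\le\min\{1/(\beta_X+\beta_{X\y}),1/(\beta_\y+\beta_{\y X}),1/(\beta_X+\beta_{\y X}),1/(\beta_\y+\beta_{X\y})\}$; taking the minimum yields the four cases displayed in the theorem. For the last-iterate $\mathcal{O}(1/\sqrt{T})$ bound, I would, exactly as in \cref{appx:proofThm26}, restrict $(\X,\y,\blambda)$ to the compact set $\mD=\{\Vert(\X,\y,\blambda)-(\X^*,\y^*,\blambda^*)\Vert\le\max\{2R_0(r),3\Vert\y^*\Vert_2,\sqrt{2}\diam(\mK)\}\}$ (which contains $(\X^*,\y,\blambda)$ for every relevant $\y,\blambda$ because $\mK$ is bounded by $\diam(\mK)$), linearize the Lagrangian, and invoke Theorem 6 of \cite{lastIterateDecrease} with the joint Lipschitz constant $\beta_\mL$ and the step-size $\eta\le 1/(\sqrt{2}\beta_\mL)$ to get the $R_1$ bound, then translate to objective and feasibility via \cref{lemma:boundf_and_norm_regularized} exactly as before.
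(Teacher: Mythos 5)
Your proposal follows essentially the same route as the paper's own proof: compute the block smoothness constants of $\mL$ on the product space, redo the perturbation bound of \cref{lemma:radius} with the factor $2\max\lbrace\beta+\rho_X,\Vert\mA\Vert,\rho_{X\lambda}\rbrace$ using \cref{lemma:range_null_nonsmooth}, keep all iterates within $2R_0(r)$ via Lemma 8 of \cite{ourExtragradient} and the $(1+\eta\beta_{\mL})\le2$ expansion, telescope the one-step extragradient inequality and convert via \cref{lemma:boundf_and_norm_regularized} for the average iterate, and invoke Theorem 6 of \cite{lastIterateDecrease} on a bounded set $\mD$ for the last iterate. Only minor touch-ups are needed: $\mL(\X^*,\cdot,\cdot)$ is linear only in $\y$ and merely concave in $\blambda$ (the paper's step uses concavity of $G(\X^*,\cdot)$, which is all Jensen requires, so your argument still goes through); the correct cross constants are $\beta_{X(y,\lambda)}=\sqrt{2}\max\lbrace\Vert\mA\Vert,\rho_{X\lambda}\rbrace$ and $\beta_{(y,\lambda)X}=\sqrt{\Vert\mA\Vert^2+\rho_{\lambda X}^2}$ rather than plain maxima; and the radius of $\mD$ must be $\max\lbrace2R_0(r),3\sqrt{2}\Vert\y^*\Vert_2,\sqrt{2}\diam(\mK)\rbrace$, the $\sqrt{2}$ being needed so that $(\X^*,\y,\blambda)$ with both dual blocks simultaneously far from $(\y^*,\blambda^*)$ still lies in $\mD$.
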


\begin{proof}
Using the inequalities of \eqref{ineq:betasNonsmoothRegGpart} we can calculate the smoothness constants of $\mL(\X,\y,\blambda)=f(\X) +G(\X,\blambda)+\y^{\top}(\b-\mA(\X))$ as defined in \eqref{ineq:betas} with respect to the variables $\X$ and $(\y,\blambda)$. Here  $\nabla_{\X}\mL=\frac{\partial \mL}{\partial \X}$ and $\nabla_{(\y,\blambda)}\mL=(\frac{\partial \mL}{\partial \y},\frac{\partial \mL}{\partial \blambda})$. We have that
\begin{align*}
\Vert\nabla_{\X}\mL(\X,\y,\blambda)-\nabla_{\X}\mL(\widetilde{\X},\y,\blambda)\Vert_{F} 
& = \Vert\nabla f(\X)-\nabla f(\widetilde{\X})+\nabla_{\X}G(\X,\blambda)-\nabla_{\X}G(\widetilde{\X},\blambda)\Vert_F
\\ &  \le(\beta+\rho_{X})\Vert\X-\widetilde{\X}\Vert_{F}, \\
\Vert\nabla_{(\y,\blambda)}\mL(\X,\y,\blambda)-\nabla_{(\y,\blambda)}\mL(\X,\widetilde{\y},\widetilde{\blambda})\Vert_{2} 
& = \Vert\nabla_{\blambda}G(\X,\blambda)-\nabla_{\blambda}G(\X,\widetilde{\blambda})\Vert_2 \le\rho_{\lambda}\Vert\blambda-\widetilde{\blambda}\Vert_{2}
\le\rho_{\lambda}\Vert(\y,\blambda)-(\widetilde{\y},\widetilde{\blambda})\Vert_{2}, \\
 \Vert\nabla_{\X}\mL(\X,\y,\blambda)-\nabla_{\X}\mL(\X,\widetilde{\y},\widetilde{\blambda})\Vert_{F}
& =\Vert\mA^{\top}(\y-\widetilde{\y})+\nabla_{\X}G(\X,\blambda)-\nabla_{\X}G(\X,\widetilde{\blambda})\Vert_{F}
\\ & \le \Vert\mA\Vert\Vert\y-\widetilde{\y}\Vert_2+\rho_{X\lambda}\Vert\blambda-\widetilde{\blambda}\Vert_{2}
 \le \sqrt{2}\max\lbrace \Vert\mA\Vert,\rho_{X\lambda}\rbrace\Vert(\y,\blambda)-(\widetilde{\y},\widetilde{\blambda})\Vert_{2},\\
\Vert\nabla_{(\y,\blambda)}\mL(\X,\y,\blambda)-\nabla_{(\y,\blambda)}\mL(\widetilde{\X},\y,\blambda)\Vert_{2} 
& = \left\Vert\left(\begin{array}{c}\mA(\X-\widetilde{\X})
\\ \nabla_{\blambda}G(\X,\blambda)-\nabla_{\blambda}G(\widetilde{\X},\blambda)\end{array}\right)\right\Vert_{2}  
\\ & = \sqrt{\Vert\mA(\X-\widetilde{\X})\Vert_2^2+\Vert\nabla_{\blambda}G(\X,\blambda)-\nabla_{\blambda}G(\widetilde{\X},\blambda)\Vert_2^2}
\\ & \le\sqrt{\Vert\mA\Vert^2+\rho_{\lambda X}^2}\Vert\X-\widetilde{\X}\Vert_{F},
\end{align*}
and so, we obtain that $\beta_X = \beta+\rho_{X}$, $\beta_{(y,\lambda)} = \rho_{\lambda}$, $\beta_{X(y,\lambda)} =  \sqrt{2}\max\lbrace \Vert\mA\Vert,\rho_{X\lambda}\rbrace$, $\beta_{(y,\lambda)X} = \sqrt{\Vert\mA\Vert^2+\rho_{\lambda X}^2}$, and  $\beta_{\mL}=\sqrt{2}\max\left\lbrace\sqrt{(\beta+\rho_{X})^2+\Vert\mA\Vert^2+\rho_{\lambda X}^2},\sqrt{\rho_{\lambda}^2+2\max\lbrace\Vert\mA\Vert^2,\rho_{X\lambda}^2\rbrace}\right\rbrace$.

Using the smoothness constants of $f$ and $G$ we also have that
\begin{align} 
& \Vert\nabla_{\X}\mL(\Z,\w,\bmu)-\nabla_{\X}\mL(\X^*,\y^*,\blambda^*)\Vert_F \nonumber
\\ & \le \Vert\nabla_{\X}\mL(\Z,\w,\bmu)-\nabla_{\X}\mL(\X^*,\w,\bmu)\Vert_F \nonumber
 + \Vert\nabla_{\X}\mL(\X^*,\w,\bmu)-\nabla_{\X}\mL(\X^*,\y^*,\bmu)\Vert_F \nonumber
\\ & \ \ \ + \Vert\nabla_{\X}\mL(\X^*,\y^*,\bmu)-\nabla_{\X}\mL(\X^*,\y^*,\blambda^*)\Vert_F \nonumber
\\ & = \Vert\nabla{}f(\Z)-\nabla{}f(\X^*)+\nabla_{\X}G(\Z,\bmu)-\nabla_{\X}G(\X^*,\bmu)\Vert_F + \Vert\mA^{\top}(\w-\y^*))\Vert_F 
+ \Vert\nabla_{\blambda}G(\X^*,\bmu)-\nabla_{\blambda}G(\X^*,\blambda^*)\Vert_F \nonumber
\\ & \underset{(a)}{\le} (\beta+\rho_X)\Vert\Z-\X^*\Vert_F+\Vert\mA\Vert\Vert\w-\y^*\Vert_2+\rho_{X\lambda}\Vert\bmu-\blambda^*\Vert_2 \nonumber
\\ & \le \max\lbrace\beta+\rho_X,\Vert\mA\Vert,\rho_{X\lambda}\rbrace\left(\Vert\Z-\X^*\Vert_F+\Vert\w-\y^*\Vert_2+\Vert\bmu-\blambda^*\Vert_2\right) \nonumber
\\ & \le \sqrt{2}\max\lbrace\beta+\rho_X,\Vert\mA\Vert,\rho_{X\lambda}\rbrace\left(\Vert(\Z,\w)-(\X^*,\y^*)\Vert+\Vert\bmu-\blambda^*\Vert_2\right)  \nonumber
\\ & \le 2\max\lbrace\beta+\rho_X,\Vert\mA\Vert,\rho_{X\lambda}\rbrace\Vert(\Z,\w,\bmu)-(\X^*,\y^*,\blambda^*)\Vert. \label{ineq:inProofSmoothnessXBound_2}
\end{align} 
Hence, we can replace \eqref{ineq:inProofSmoothnessXBound_1} with \eqref{ineq:inProofSmoothnessXBound_2} in the proof of \cref{lemma:radius}, and we obtain that for any $r\ge\tilde{r}$, $\eta\ge0$, and $\X\in\mbS^n,(\Z,\w,\bmu)\in\mathbb{S}^n_+\times\reals^m\times\mK$, if
\begin{align} \label{ineq:radiusNonsmooth}
& \max\lbrace\Vert\X-\X^*\Vert_F,\Vert(\Z,\w,\bmu)-(\X^*,\y^*,\blambda^*)\Vert\rbrace \nonumber
\\ & \le \frac{\eta\max\left\lbrace\lambda_{n-r}(\nabla_{\X}\mL(\X^*,\y^*,\blambda^*)),\sqrt{\tilde{r}}\lambda_{n-r+\tilde{r}-1}(\nabla_{\X}\mL(\X^*,\y^*,\blambda^*))\right\rbrace}{1+2\eta\max\lbrace\beta+\rho_X,\Vert\mA\Vert,\rho_{X\lambda}\rbrace}
\end{align}
then $\rank(\Pi_{\mathbb{S}^n_+}[\X-\eta\nabla{}\mL(\Z,\w,\bmu)])\le r$.

Replacing the $\y$ variable in Theorem \ref{thm:SDP} with the product $(\y,\blambda)\in\reals^m\times\mK$, replacing the function $f(\X)$ with $f(\X)+\max_{\blambda\in\mK}G(\X,\blambda)$, setting the correct smoothness parameters in the step size, and adjusting the bound $R_0(r)$ so as to fulfil the requirement of \eqref{ineq:radiusNonsmooth}, we have from Theorem \ref{thm:SDP} that all projections can be replaced with rank-$r$ truncated  projections while maintaining the original convergence rate of the extragradient method.
Denoting $\mC=\lbrace\y\in\reals^m\ \vert\ \Vert\y\Vert_2\le2\Vert\y^*\Vert_2\rbrace$, we also have that inequality \eqref{ineq:inProof1111} can be replaced with 
%\textcolor{red}{ 
the following inequality for all $\y\in\mC$ and $\blambda\in\mK$: 
\begin{align} \label{ineq:inProof222}
& \frac{1}{T}\sum_{t=1}^T \mL(\Z_{t+1},\y,\blambda) - \frac{1}{T}\sum_{t=1}^T \mL(\X^*,\w_{t+1},\bmu_{t+1}) 
\le \frac{1}{2\eta T}\left(\Vert\X_1-\X^*\Vert_F^2+\max_{\y\in\mC}\Vert\y_1-\y\Vert_2^2+\max_{\blambda\in\mK}\Vert\blambda_1-\blambda\Vert_2^2\right).
\end{align}%}
The second term in the LHS of \eqref{ineq:inProof222} can be bounded by
\begin{align} \label{ineq:inProof221}
\frac{1}{T}\sum_{t=1}^T \mL(\X^*,\w_{t+1},\bmu_{t+1}) \nonumber
& = f(\X^*)+\frac{1}{T}\sum_{t=1}^TG(\X^*,\bmu_{t+1})+\frac{1}{T}\sum_{t=1}^T\w_{t+1}^{\top}(\b-\mA(\X^*)) \nonumber 
\\ & \underset{(a)}{\le} f(\X^*)+G\left(\X^*,\frac{1}{T}\sum_{t=1}^T\bmu_{t+1}\right)+\frac{1}{T}\sum_{t=1}^T\w_{t+1}^{\top}(\b-\mA(\X^*)) \nonumber
\\ & = \mL\left(\X^*,\frac{1}{T}\sum_{t=1}^T\w_{t+1},\frac{1}{T}\sum_{t=1}^T\bmu_{t+1}\right),
\end{align}
where (a) follows from the concavity of $G(\X^*,\cdot)$. Plugging \eqref{ineq:inProof221} into \eqref{ineq:inProof222} and using %\textcolor{red}{
the convexity of $\mL(\cdot,\y,\blambda)$, we have in particular that for all $\y\in\mC$ and $\blambda\in\mK$ it holds that
\begin{align*}
& \mL\left(\frac{1}{T}\sum_{t=1}^T\Z_{t+1},\y,\blambda\right) - \mL\left(\X^*,\frac{1}{T}\sum_{t=1}^T\w_{t+1},\frac{1}{T}\sum_{t=1}^T\bmu_{t+1}\right) \nonumber
\\ & \le \frac{1}{2\eta T}\left(\Vert\X_1-\X^*\Vert_F^2+\max_{\y\in\mC}\Vert\y_1-\y\Vert_2^2+\max_{\blambda\in\mK}\Vert\blambda_1-\blambda\Vert_2^2\right).
\end{align*}
%Denoting $\widehat{\X}\in\argmin_{k\in[T]} \max_{\y\in\mC,\ \blambda\in\mK} \mL(\Z_{k+1},\y,\blambda)$ we have that for every $\y\in\mC$ and $\blambda\in\mK$ it holds that
%\begin{align*}
%&  \mL(\widehat{\X},\y,\blambda) - \mL\left(\X^*,\frac{1}{T}\sum_{t=1}^T\w_{t+1},\frac{1}{T}\sum_{t=1}^T\bmu_{t+1}\right)\le \frac{1}{2\eta T}\left(\Vert\X_1-\X^*\Vert_F^2+\max_{\y\in\mC}\Vert\y_1-\y\Vert_2^2+\max_{\blambda\in\mK}\Vert\blambda_1-\blambda\Vert_2^2\right).
%\end{align*}
Invoking Lemma \ref{lemma:boundf_and_norm_regularized} we obtain the bounds on $f\left(\frac{1}{T}\sum_{t=1}^T\Z_{t+1}\right)+g\left(\frac{1}{T}\sum_{t=1}^T\Z_{t+1}\right)-f(\X^*)-g(\X^*)$ and $\left\Vert\mA\left(\frac{1}{T}\sum_{t=1}^T\Z_{t+1}\right)-\b\right\Vert_2$ in the statement of the theorem.%}

For the second part of the theorem, 
denote $$\mD=\left\lbrace(\X,\y,\blambda)\in\mathbb{S}^n_+\times\reals^m\times\mK~|~\Vert(\X,\y,\blambda)-(\X^*,\y^*,\blambda^*)\Vert\le \max\lbrace 2R_0(r),3\sqrt{2}\Vert\y^*\Vert_2,\sqrt{2}\cdot\diam(\mK)\rbrace\right\rbrace.$$ 
As we showed above, all iterates are within the ball $\mD$. By the convexity of $\mL(\cdot,\w_{T+1},\bmu_{T+1})$ and concavity of $\mL(\Z_{T+1},\cdot,\cdot)$, for all $(\X,\y,\blambda)\in\mD$
%$\X\in\mathbb{S}^n_+\cap\mD_{X}$ and $\y\in\mD_{y}$, 
it holds that
\begin{align} \label{ineq:inProof0001}
\mL(\Z_{T+1},\w_{T+1},\bmu_{T+1})-\mL(\X,\w_{T+1},\bmu_{T+1}) & \le \langle\Z_{T+1}-\X,\nabla_{\X}\mL(\Z_{T+1},\w_{T+1},\bmu_{T+1})\rangle \nonumber
%\\ \mL(\Z_{T+1},\y,\blambda)-\mL(\Z_{T+1},\w_{T+1},\blambda) & = \langle\w_{T+1}-\y,-\nabla_{\y}\mL(\Z_{T+1},\w_{T+1},\blambda)\rangle \nonumber
%\\ & = \langle\w_{T+1}-\y,-\nabla_{\y}\mL(\Z_{T+1},\w_{T+1},\bmu_{T+1})\rangle, \nonumber
\\ \mL(\Z_{T+1},\y,\blambda)-\mL(\Z_{T+1},\w_{T+1},\bmu_{T+1}) & \le \langle\w_{T+1}-\y,-\nabla_{\y}\mL(\Z_{T+1},\w_{T+1},\bmu_{T+1})\rangle \nonumber
\\ & \ \ \ +\langle\bmu_{T+1}-\blambda,-\nabla_{\blambda}\mL(\Z_{T+1},\w_{T+1},\bmu_{T+1})\rangle.
\end{align}
For all $\y\in\mC$ and $\blambda\in\mK$ it holds that $(\X^*,\y,\blambda)\in\mD$ since 
\begin{align*}
\Vert(\X^*,\y,\blambda)-(\X^*,\y^*,\blambda^*)\Vert & = \Vert(\y,\blambda)-(\y^*,\blambda^*)\Vert_2=\sqrt{\Vert\y-\y^*\Vert_2^2+\Vert\blambda-\blambda^*\Vert_2^2}
\\ & \le\sqrt{(\Vert\y\Vert_2+\Vert\y^*\Vert_2)^2+\Vert\blambda-\blambda^*\Vert_2^2}\le\sqrt{(3\Vert\y^*\Vert_2)^2+\Vert\blambda-\blambda^*\Vert_2^2}
\\ & \le\sqrt{2}\max\lbrace 3\Vert\y^*\Vert_2,\diam(\mK)\rbrace.
\end{align*}
Therefore, summing the two inequalities in \eqref{ineq:inProof0001} and taking the maximum over all  $(\X',\y',\blambda')\in\mD$ we obtain that in particular for all $\y\in\mC$ and $\blambda\in\mK$ it holds that
\begin{align*}
& \mL(\Z_{T+1},\y,\blambda)-\mL(\X^*,\w_{T+1},\bmu_{T+1})
\\ & \le \max_{(\X',\y',\blambda')\in\mD}\left(\begin{array}{l} \langle\Z_{T+1}-\X',\nabla_{\X}\mL(\Z_{T+1},\w_{T+1},\bmu_{T+1})\rangle + \langle\w_{T+1}-\y',-\nabla_{\y}\mL(\Z_{T+1},\w_{T+1},\bmu_{T+1})\rangle
\\ + \langle\bmu_{T+1}-\blambda',-\nabla_{\blambda}\mL(\Z_{T+1},\w_{T+1},\bmu_{T+1})\rangle\end{array}\right)
\\ & \underset{(a)}{\le} \frac{1}{\sqrt{T}}\frac{3 \max\lbrace 2R_0(r),3\sqrt{2}\Vert\y^*\Vert_2,\sqrt{2}\cdot\diam(\mK)\rbrace\Vert(\X_1,\y_1,\blambda_1)-(\X^*,\y^*,\blambda^*)\Vert}{\eta\sqrt{1-\eta^2\beta_{\mL}^2}}
\\ & \underset{(b)}{\le} \frac{1}{\sqrt{T}}\frac{3\sqrt{2} \max\lbrace 2R_0(r),3\sqrt{2}\Vert\y^*\Vert_2,\sqrt{2}\cdot\diam(\mK)\rbrace\Vert(\X_1,\y_1,\blambda_1)-(\X^*,\y^*,\blambda^*)\Vert}{\eta}.
\end{align*}
where (a) follows from Theorem 6 in \cite{lastIterateDecrease} and (b) follows from our choice of step-size which satisfies that $\eta\le 1/(\sqrt{2}\beta_{\mL})$. Invoking \cref{lemma:boundf_and_norm_regularized} we obtain the result.
\end{proof}

\section{Proofs omitted from \cref{sec:nonlinearInequalities}}

We first present and prove two lemmas that are necessary for proving \cref{thm:inequlalityConstraints}. 

The following lemma is analogous to \cref{lemma:radius}. In the setting of Problem \eqref{problem:ineqConstrained} since the smoothness of $\mL$ is not bounded, we need to consider points within a bounded set.
\begin{lemma} \label{lemma:radius_inequalites}
Let $(\X^*,\y^*,\blambda^*)$ be a saddle-point of Problem \eqref{problem:ineqConstrained} and denote $\tilde{r}=n-\rank(\nabla_{\X}\mL(\X^*,\y^*,\blambda^*))$. 
Set some $r\ge\tilde{r}$ and denote $\delta(r):=\max\left\lbrace\lambda_{n-r}(\nabla_{\X}\mL(\X^*,\y^*,\blambda^*)),\sqrt{\tilde{r}}\lambda_{n-r+\tilde{r}-1}(\nabla_{\X}\mL(\X^*,\y^*,\blambda^*))\right\rbrace.$
Denote $\mD_X=\lbrace\X\succeq0~\vert~\Vert\X-\X^*\Vert_F\le2\delta(r)\rbrace$ and $\mD_{\lambda}=\lbrace\blambda\in\reals^d_+~\vert~\Vert\blambda-\blambda^*\Vert_2\le2\delta(r)\rbrace$.
Then, for any $\eta\ge0$ and $\X\in\mbS^n,(\Z,\w,\bmu)\in\mD_X\times\reals^m\times\mD_{\lambda}$, if
\begin{align*}
& \max\lbrace\Vert\X-\X^*\Vert_F,\Vert(\Z,\w,\bmu)-(\X^*,\y^*,\blambda^*)\Vert\rbrace
\\ & \le \frac{\eta\cdot\max\left\lbrace\lambda_{n-r}(\nabla_{\X}\mL(\X^*,\y^*,\blambda^*)),\sqrt{\tilde{r}}\lambda_{n-r+\tilde{r}-1}(\nabla_{\X}\mL(\X^*,\y^*,\blambda^*))\right\rbrace}{1+2\eta\max\lbrace\beta+\sup_{\blambda\in\mD_{\lambda}}\Vert\blambda\Vert_1\beta_g,\Vert\mA\Vert,\sqrt{d}\sup_{\X\in\mD_X}\max_{i\in[d]}\Vert\nabla{}g_i(\X)\Vert_F\rbrace}
\end{align*}
then $\rank\left(\Pi_{\mathbb{S}^n_+}[\X-\eta\nabla{}\mL(\Z,\w,\bmu)]\right)\le r$.
\end{lemma}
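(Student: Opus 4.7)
The plan is to mimic the proof of \cref{lemma:radius}, extending it to accommodate the extra dual variable $\blambda$ and the bilinear coupling term $\blambda^{\top}g(\X)$ in the Lagrangian. First I would establish the analog of \cref{lemma:range_null_nonsmooth}: under Slater (\cref{Ass:slater_inequality}) and boundedness, strong duality and the KKT conditions for \eqref{problem:ineqConstrained} give $\nabla_{\X}\mL^{*} = \nabla f(\X^*) - \mA^{\top}(\y^*) + \sum_i \blambda^*_i \nabla g_i(\X^*) \succeq 0$ together with $\langle \X^*, \nabla_{\X}\mL^{*}\rangle = 0$, so $\range(\X^*) \subseteq \nullspace(\nabla_{\X}\mL^{*})$. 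Consequently, setting $\P^{*} := \X^* - \eta\nabla_{\X}\mL^{*}$, the eigenvalues of $\P^{*}$ decouple exactly as in \eqref{ineq:inProofLambdasOpt}: the top $\rank(\X^*)$ are the positive eigenvalues of $\X^*$ and the remaining ones are $-\eta\lambda_{n-i+1}(\nabla_{\X}\mL^{*})$.

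Next, denoting $\P := \X - \eta\nabla_{\X}\mL(\Z,\w,\bmu)$, the triangle inequality reduces the task to bounding $\|\nabla_{\X}\mL(\Z,\w,\bmu) - \nabla_{\X}\mL^{*}\|_{F}$. I would split this difference into four pieces:
\begin{align*}
\nabla_{\X}\mL(\Z,\w,\bmu) - \nabla_{\X}\mL^{*} = \bigl[\nabla f(\Z) - \nabla f(\X^*)\bigr] + \sum_{i=1}^{d}\bmu_i\bigl[\nabla g_i(\Z) - \nabla g_i(\X^*)\bigr] + \sum_{i=1}^{d}(\bmu_i - \blambda^*_i)\nabla g_i(\X^*) - \mA^{\top}(\w - \y^*).
\end{align*}
The $f$-piece is bounded by $\beta\|\Z-\X^*\|_{F}$, the first $g$-sum by $\beta_g\|\bmu\|_1\|\Z-\X^*\|_{F}$ (triangle inequality plus $\beta_g$-smoothness of each $g_i$), the second $g$-sum by $\sqrt{d}\max_i\|\nabla g_i(\X^*)\|_{F}\|\bmu-\blambda^*\|_2$ via Cauchy--Schwarz, and the $\mA$-piece by $\|\mA\|\|\w-\y^*\|_2$. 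The hypothesis $(\Z,\bmu)\in\mD_X\times\mD_{\lambda}$, together with $\X^*\in\mD_X$ and $\blambda^*\in\mD_{\lambda}$, lets me replace $\|\bmu\|_1$ and $\|\nabla g_i(\X^*)\|_{F}$ by their suprema over $\mD_{\lambda}$ and $\mD_X$. Collecting via $a_1+a_2+a_3 \le \sqrt{3}\sqrt{a_1^2+a_2^2+a_3^2} \le 2\|(\Z,\w,\bmu)-(\X^*,\y^*,\blambda^*)\|$ yields
\begin{align*}
\|\P-\P^{*}\|_{F} \le (1+2\eta M)\max\bigl\{\|\X-\X^*\|_{F},\,\|(\Z,\w,\bmu)-(\X^*,\y^*,\blambda^*)\|\bigr\},
\end{align*}
where $M := \max\{\beta + \sup_{\blambda\in\mD_{\lambda}}\|\blambda\|_1\beta_g,\,\|\mA\|,\,\sqrt{d}\sup_{\X\in\mD_X}\max_{i\in[d]}\|\nabla g_i(\X)\|_{F}\}$, matching the denominator in the stated hypothesis.

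I would then finish with two Weyl-type arguments, identical in form to those used at the end of \cref{lemma:radius}. The standard Weyl inequality gives $\lambda_{r+1}(\P) \le \lambda_{r+1}(\P^{*}) + \|\P-\P^{*}\|_{F} = -\eta\lambda_{n-r}(\nabla_{\X}\mL^{*}) + \|\P-\P^{*}\|_{F}$, which is nonpositive under the first branch of the max in the hypothesis; the general Weyl inequality gives $\lambda_{r+1}(\P) \le \lambda_{r-\tilde r+2}(\P^{*}) + \lambda_{\tilde r}(\P-\P^{*}) \le -\eta\lambda_{n-r+\tilde r-1}(\nabla_{\X}\mL^{*}) + \tilde r^{-1/2}\|\P-\P^{*}\|_{F}$, which is nonpositive under the second branch. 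Either way $\lambda_{r+1}(\P)\le 0$, and so by \eqref{def:EuclideanProjection} we conclude $\rank(\Pi_{\mbS^n_+}[\P])\le r$. The main obstacle, compared with \cref{lemma:radius}, is precisely the bilinear term $\blambda^{\top}g(\X)$: the Lagrangian gradient fails to be globally Lipschitz in $(\X,\y,\blambda)$ because $\|\sum_i\bmu_i\nabla g_i(\X)\|_{F}$ grows with $\|\bmu\|_1$, and this is exactly why the hypothesis localizes $(\Z,\bmu)$ to $\mD_X\times\mD_{\lambda}$; the size of these localizing sets is dictated by $\delta(r)$ and is what will in turn shape $R_0(r)$ in \cref{thm:inequlalityConstraints}.
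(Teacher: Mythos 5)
Your proposal is correct and follows essentially the same route as the paper: establish $\nabla_{\X}\mL^*\succeq0$ and $\range(\X^*)\subseteq\nullspace(\nabla_{\X}\mL^*)$ from the KKT conditions, bound $\Vert\nabla_{\X}\mL(\Z,\w,\bmu)-\nabla_{\X}\mL^*\Vert_F$ by $2M\Vert(\Z,\w,\bmu)-(\X^*,\y^*,\blambda^*)\Vert$ with exactly the localized constant $M$ (your direct four-term split with the $\sqrt{3}\le2$ step is equivalent to the paper's chained triangle inequality), and conclude with the same two Weyl-type estimates. The only cosmetic omission is that the second Weyl branch, using $\lambda_{r-\tilde r+2}(\P^*)=-\eta\lambda_{n-r+\tilde r-1}(\nabla_{\X}\mL^*)$, is valid under $r\ge2\tilde r-1$ (as the paper notes in \cref{lemma:radius}); for smaller $r$ that term of the max vanishes anyway, so the result is unaffected.
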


\begin{proof}
Note that the addition of the conditions  $\blambda^{\top} g(\X^*)=0$ and $\blambda^*\ge0$ to the KKT conditions in the proof of Lemma \ref{lemma:range_null} do not change the proof, and so we have that Lemma \ref{lemma:range_null} holds for this setting with $\nabla_{\X}\mL(\X^*,\y^*,\blambda^*)=\nabla{}f(\X^*)-\mA^{\top}(\y^*)+\sum_{i=1}^d\blambda^*_i\nabla{}g_i(\X^*)\succeq0$. 
Therefore, the proof follows using the same arguments as the proof of Lemma \ref{lemma:radius} where we replace $\nabla_{\X}\mL(\X^*,\y^*)$  with $\nabla_{\X}\mL(\X^*,\y^*,\blambda^*)$ and we denote $\P:=\X-\eta\nabla{}\mL(\Z,\w,\bmu)$ and $\P^*:=\X^*-\eta\nabla{}\mL(\X^*,\y^*,\blambda^*)$.  
%we have that any eigenvector $\v$ of $\X^*$ corresponding to a positive eigenvalue is an eigenvector of $\nabla_{\X}\mL(\X^*,\y^*,\blambda^*)$ corresponding to the eigenvalue $0$. Therefore, $\P^*$ satisfies that
%\begin{align} \label{ineq:inProofLambdasOpt1}
%i\le \rank(\X^*):\quad & \lambda_i(\P^*) =  \lambda_i(\X^*), \nonumber
%\\ i>\rank(\X^*):\quad & \lambda_i(\P^*) =  -\eta\lambda_{n-i+1}(\nabla_{\X}\mL(\X^*,\y^*,\blambda^*)).
%\end{align}
The only difference is that we replace inequality \eqref{ineq:inProofSmoothnessXBound_1} with the following bound.
\begin{align} 
& \Vert\nabla_{\X}\mL(\Z,\w,\bmu)-\nabla_{\X}\mL(\X^*,\y^*,\blambda^*)\Vert_F \nonumber
\\ & \le \Vert\nabla_{\X}\mL(\Z,\w,\bmu)-\nabla_{\X}\mL(\X^*,\w,\bmu)\Vert_F + \Vert\nabla_{\X}\mL(\X^*,\w,\bmu)-\nabla_{\X}\mL(\X^*,\y^*,\bmu)\Vert_F \nonumber
\\ & \ \ \ + \Vert\nabla_{\X}\mL(\X^*,\y^*,\bmu)-\nabla_{\X}\mL(\X^*,\y^*,\blambda^*)\Vert_F \nonumber
\\ & = \Vert\nabla{}f(\Z)-\nabla{}f(\X^*)+\sum_{i=1}^{d}\blambda_i^*(\nabla{}g_i(\Z)-\nabla{}g_i(\X^*))\Vert_F  + \Vert\mA^{\top}(\w-\y^*))\Vert_F+ \Vert\sum_{i=1}^{d}(\bmu_i-\blambda_i^*)\nabla{}g_i(\X^*)\Vert_F \nonumber
\\ & \underset{(a)}{\le} (\beta+\Vert\bmu\Vert_1\beta_g)\Vert\Z-\X^*\Vert_F+\Vert\mA\Vert\Vert\w-\y^*\Vert_2 +\max_{i\in[d]}\Vert\nabla{}g_i(\X^*)\Vert_F\Vert\bmu-\blambda^*\Vert_1 \nonumber
\\ & \le (\beta+\Vert\bmu\Vert_1\beta_g)\Vert\Z-\X^*\Vert_F+\Vert\mA\Vert\Vert\w-\y^*\Vert_2 +\sqrt{d}\max_{i\in[d]}\Vert\nabla{}g_i(\X^*)\Vert_F\Vert\bmu-\blambda^*\Vert_2 \nonumber
\\ & \le \max\lbrace\beta+\sup_{\blambda\in\mD_{\lambda}}\Vert\blambda\Vert_1\beta_g,\Vert\mA\Vert,\sqrt{d}\sup_{\X\in\mD_X}\max_{i\in[d]}\Vert\nabla{}g_i(\X)\Vert_F\rbrace\left(\Vert\Z-\X^*\Vert_F+\Vert\w-\y^*\Vert_2+\Vert\bmu-\blambda^*\Vert_2\right) \nonumber
\\ & \le \sqrt{2}\max\lbrace\beta+\sup_{\blambda\in\mD_{\lambda}}\Vert\blambda\Vert_1\beta_g,\Vert\mA\Vert,\sqrt{d}\sup_{\X\in\mD_X}\max_{i\in[d]}\Vert\nabla{}g_i(\X)\Vert_F\rbrace\left(\Vert(\Z,\w)-(\X^*,\y^*)\Vert+\Vert\bmu-\blambda^*\Vert_2\right)  \nonumber
\\ & \le 2\max\lbrace\beta+\sup_{\blambda\in\mD_{\lambda}}\Vert\blambda\Vert_1\beta_g,\Vert\mA\Vert,\sqrt{d}\sup_{\X\in\mD_X}\max_{i\in[d]}\Vert\nabla{}g_i(\X)\Vert_F\rbrace \Vert(\Z,\w,\bmu)-(\X^*,\y^*,\blambda^*)\Vert
%\\ & \le 2\max\lbrace\beta+\Vert\blambda\Vert_2\beta_g,\Vert\mA\Vert,\Vert\nabla{}g(\X^*)\Vert_2\rbrace\Vert(\Z,\w,\bmu)-(\X^*,\y^*,\blambda^*)\Vert \nonumber
%\\ & \le 2\max\lbrace\beta+\sup_{\blambda\in\mD_{\lambda}}\Vert\blambda\Vert_2\beta_g,\Vert\mA\Vert,\sup_{\X\in\mD_X}\Vert\nabla{}g(\X)\Vert_2\rbrace \nonumber
%\\ &  \ \ \ \ \ \Vert(\Z,\y,\blambda)-(\X^*,\y^*,\blambda^*)\Vert, \label{ineq:inProofSmoothnessXBound}
\end{align}
where (a) follows from the smoothness of $\mL$.

\end{proof}

The following lemma shows the relationship between a convergence of the Lagrangian and a convergence of $f$ to a solution that approximately satisfies the linear constraints and nonlinear inequalities.  
\begin{lemma} \label{lemma:boundf_and_norm_ineqConstrained}
Let $\X^*\succeq0$ be an optimal solution to Problem \eqref{problem:ineqConstrained} and let $(\widehat{\X},(\widehat{\y},\widehat{\blambda}))\in\mathbb{S}^n_+\times(\reals^m\times\reals^d_+)$. Let $(\y^*,\blambda^*)\in\argmax_{\y\in\reals^m,\ \blambda\in\reals^d_+}\mL(\X^*,\y,\blambda)$.
Denote $\mC_y=\lbrace\y\in\reals^m\ \vert\ \Vert\y\Vert_2\le2\Vert\y^*\Vert_2\rbrace$ and $\mC_{\lambda}=\lbrace\blambda\in\reals^d_+\ \vert\ \Vert\blambda\Vert_2\le\Vert\blambda^*\Vert_2+\sqrt{d}\rbrace$. Assume that for all $(\y,\blambda)\in\mC_y\times\mC_{\lambda}$ 
\begin{align*}
\mL(\widehat{\X},\y,\blambda)-\mL(\X^*,\widehat{\y},\widehat{\blambda})\le\varepsilon.
\end{align*}
Then it holds that
$f(\widehat{\X})-f(\X^*)\le\varepsilon$, $\Vert\mA(\widehat{\X})-\b\Vert_2\le\varepsilon/\Vert\y^*\Vert_2$, and $\sum_{i\in\lbrace i\in[d]~\vert~g_i(\widehat{\X})>0\rbrace}g_i(\widehat{\X})\le\varepsilon$.
\end{lemma}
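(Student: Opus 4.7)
The plan is to prove each of the three conclusions by plugging a carefully chosen $(\y,\blambda)\in\mC_y\times\mC_{\lambda}$ into the hypothesis, in the spirit of \cref{lemma:boundf_and_norm} but exploiting the additional dual variable $\blambda$. Two preliminary observations are used throughout: $\mA(\X^*)=\b$, so $\widehat{\y}^{\top}(\b-\mA(\X^*))=0$ for any $\widehat{\y}$; and $\widehat{\blambda}\ge 0$ together with $g(\X^*)\le 0$ gives $\widehat{\blambda}^{\top}g(\X^*)\le 0$, hence $-\mL(\X^*,\widehat{\y},\widehat{\blambda})\ge -f(\X^*)$ regardless of the choice of $(\widehat{\y},\widehat{\blambda})$.

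First I would establish $f(\widehat{\X})-f(\X^*)\le\varepsilon$ by the trivial substitution $\y=\mathbf{0}\in\mC_y$, $\blambda=\mathbf{0}\in\mC_{\lambda}$; with these choices the left-hand side of the hypothesis reduces directly to $f(\widehat{\X})-f(\X^*)-\widehat{\blambda}^{\top}g(\X^*)$, and the preliminary observation $-\widehat{\blambda}^{\top}g(\X^*)\ge 0$ yields the bound.

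For the remaining two bounds I would use a single substitution that simultaneously probes the affine feasibility and the inequality violation. Take $\y=2\Vert\y^*\Vert_2(\b-\mA(\widehat{\X}))/\Vert\mA(\widehat{\X})-\b\Vert_2$ (which has norm $2\Vert\y^*\Vert_2$, so lies in $\mC_y$), and $\blambda=\blambda^*+\sum_{i:g_i(\widehat{\X})>0}\e_i$, where $\e_i$ is the $i$-th standard basis vector in $\reals^d$. Then $\blambda\in\reals^d_+$ and $\Vert\blambda-\blambda^*\Vert_2\le\sqrt{d}$, so $\blambda\in\mC_{\lambda}$. Plugging in, the $\blambda$-term splits as $\blambda^{\top}g(\widehat{\X})=(\blambda^*)^{\top}g(\widehat{\X})+\sum_{i:g_i(\widehat{\X})>0}g_i(\widehat{\X})$ and the $\y$-term becomes $2\Vert\y^*\Vert_2\Vert\mA(\widehat{\X})-\b\Vert_2$. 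I would then lower bound $f(\widehat{\X})+(\blambda^*)^{\top}g(\widehat{\X})$ by $f(\X^*)-\Vert\y^*\Vert_2\Vert\mA(\widehat{\X})-\b\Vert_2$ using the saddle-point inequality $\mL(\widehat{\X},\y^*,\blambda^*)\ge\mL(\X^*,\y^*,\blambda^*)=f(\X^*)$ together with Cauchy--Schwarz on $(\y^*)^{\top}(\b-\mA(\widehat{\X}))$. Substituting back cancels $f(\X^*)$ and leaves $\Vert\y^*\Vert_2\Vert\mA(\widehat{\X})-\b\Vert_2+\sum_{i:g_i(\widehat{\X})>0}g_i(\widehat{\X})\le\varepsilon$; since both summands are nonnegative, each is individually $\le\varepsilon$, giving the two remaining bounds.

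The main delicate point is engineering the particular form of $\blambda$. The naive analogue of the affine-constraints argument would be $\blambda_i=\mathbf{1}[g_i(\widehat{\X})>0]$, but this leaves an uncontrolled residual $(\blambda^*)^{\top}g(\widehat{\X})$ that need not be nonpositive on the violated coordinates. Offsetting by $\blambda^*$ is precisely what lets the saddle-point inequality cancel this residual against $f(\X^*)$, and it is also the reason the radius defining $\mC_{\lambda}$ is $\Vert\blambda^*\Vert_2+\sqrt{d}$ rather than a smaller quantity.
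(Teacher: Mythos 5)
Your proposal is correct and follows essentially the paper's own argument: the same kind of dual test points (a scaled multiple of $\b-\mA(\widehat{\X})$ for $\y$, and $\blambda^*$ shifted by one unit on each violated coordinate), combined with the saddle-point inequality $\mL(\X^*,\y^*,\blambda^*)\le\mL(\widehat{\X},\y^*,\blambda^*)$, Cauchy--Schwarz, and the sign facts $\mA(\X^*)=\b$, $\widehat{\blambda}^{\top}g(\X^*)\le 0$. The only difference is cosmetic: you obtain the feasibility and constraint-violation bounds from a single combined substitution (and use $(\mathbf{0},\mathbf{0})$ for the objective bound), where the paper uses three separate substitutions; both versions share the same harmless implicit treatment of the degenerate case $\mA(\widehat{\X})=\b$ when normalizing $\y$.
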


\begin{proof}
For all $(\y,\blambda)\in\mC_y\times\mC_{\lambda}$  it holds that
\begin{align}  \label{ineq:func&normBound_regularized_general}
& \mL(\widehat{\X},\y,\blambda)-\mL(\X^*,\widehat{\y},\widehat{\blambda})=f(\widehat{\X})+\blambda^{\top}g(\widehat{\X})+\y^{\top}(\b-\mA(\widehat{\X}))-f(\X^*)-\widehat{\blambda}^{\top}g(\X^*)\le\varepsilon.
\end{align}
In particular, denoting $\alpha:=2\Vert\y^*\Vert$ and plugging $\y=\frac{\alpha(\b-\mA(\widehat{\X}))}{\Vert\mA(\widehat{\X})-\b\Vert_2}\in\mC_y$ and $\blambda=0\in\mC_{\lambda}$ into \eqref{ineq:func&normBound_regularized_general}, we obtain
\begin{align*}
f(\widehat{\X})-f(\X^*)-\widehat{\blambda}^{\top}g(\X^*)+\alpha\Vert\mA(\widehat{\X})-\b\Vert_2^2\le\varepsilon.
\end{align*}
Since $\Vert\mA(\widehat{\X})-\b\Vert_2\ge0$ and $-\widehat{\blambda}^{\top}g(\X^*)\ge0$ it follows immediately that $f(\widehat{\X})-f(\X^*)\le\varepsilon$. 

Alternatively, plugging $\y=\frac{\alpha(\b-\mA(\widehat{\X}))}{\Vert\mA(\widehat{\X})-\b\Vert_2}\in\mC_y$ and $\blambda=\blambda^*\in\mC_{\lambda}$ into \eqref{ineq:func&normBound_regularized_general}, we obtain that
\begin{align} \label{ineq:func&normBound_regularized_1}
f(\widehat{\X})+{\blambda^*}^{\top}g(\widehat{\X})-f(\X^*)-\widehat{\blambda}^{\top}g(\X^*)+\alpha\Vert\mA(\widehat{\X})-\b\Vert_2^2\le\varepsilon.
\end{align}

In addition, since $(\X^*,\y^*,\blambda^*)$ is a saddle-point it holds that 
$\mL(\X^*,\y^*,\widehat{\blambda}) \le \mL(\X^*,\y^*,\blambda^*) \le \mL(\widehat{\X},\y^*,\blambda^*)$.
Therefore,
\begin{align} \label{ineq:func&normBound_regularized_3}
f(\X^*)+\widehat{\blambda}^{\top}g(\X^*) \le  f(\widehat{\X}) + {\y^*}^{\top}(\b-\mA(\widehat{\X}))+{\blambda^*}^{\top}g(\widehat{\X}).
\end{align}
Plugging \eqref{ineq:func&normBound_regularized_3} into \eqref{ineq:func&normBound_regularized_1}, we obtain that
\begin{align*}
\alpha\Vert\mA(\widehat{\X})-\b\Vert_2 \le \varepsilon+{\y^*}^{\top}(\b-\mA(\widehat{\X})) \le \varepsilon+\Vert\y^*\Vert_2\Vert\mA(\widehat{\X})-\b\Vert_2. 
\end{align*}
Rearranging, we obtain the desired result of $\Vert\mA(\widehat{\X})-\b\Vert_2\le\varepsilon/(\alpha-\Vert\y^*\Vert_2)=\varepsilon/\Vert\y^*\Vert_2$.
Finally, plugging $\y=\y^*\in\mC_y$ and $\blambda=\widetilde{\blambda}\in\mC_{\lambda}$ such that $$\widetilde{\blambda}_i=\bigg\lbrace\begin{array}{ll} \blambda^*_i, & \textrm{if } g_i(\widehat{\X})\le0
\\ \blambda^*_i+1,  & \textrm{if } g_i(\widehat{\X})>0\end{array}$$ into \eqref{ineq:func&normBound_regularized_general} we have that
\begin{align} \label{ineq:func&normBound_regularized_2}
f(\widehat{\X})+{\widetilde{\blambda}}^{\top}g(\widehat{\X})+{\y^*}^{\top}(\b-\mA(\widehat{\X}))-f(\X^*)-\widehat{\blambda}^{\top}g(\X^*)\le\varepsilon.
\end{align}
Plugging \eqref{ineq:func&normBound_regularized_3} into \eqref{ineq:func&normBound_regularized_2} we obtain that
$
(\widetilde{\blambda}-\blambda^*)^{\top}g(\widehat{\X})=\sum_{i\in\lbrace i\in[d]~\vert~g_i(\widehat{\X})>0\rbrace}g_i(\widehat{\X})\le\varepsilon$.
\end{proof}

\subsection{Proof of \cref{thm:inequlalityConstraints}}
\label{appx:proofThm56}

We first restate the theorem and then prove it. 
\begin{theorem}
Fix an optimal solution $\X^*\succeq0$ to Problem \eqref{problem:ineqConstrained}. Let $(\y^*,\blambda^*)\in\reals^m\times\reals^d_+$ be a corresponding dual solution and suppose $(\X^*,(\y^*,\blambda^*))$ satisfies the complementarity condition \eqref{eq:compcond:const} with rank $\tilde{r}:=n-\rank(\nabla_{\X}\mL(\X^*,\y^*,\blambda^*))$.  
Fix some $r\ge\tilde{r}$ and denote
$\delta(r):=
 \max\left\lbrace\lambda_{n-r}(\nabla_{\X}\mL(\X^*,\y^*,\blambda^*)),\sqrt{\tilde{r}}\lambda_{n-r+\tilde{r}-1}(\nabla_{\X}\mL(\X^*,\y^*,\blambda^*))\right\rbrace$. Denote $\mD_X=\lbrace\X\succeq0~\vert~\Vert\X-\X^*\Vert_F\le2\delta(r)\rbrace$ and $\mD_{\lambda}=\lbrace\blambda\in\reals^d_+~\vert~\Vert\blambda-\blambda^*\Vert_2\le2\delta(r)\rbrace$.
Let $\lbrace(\X_t,(\y_t,\blambda_t))\rbrace_{t\ge1}$ and $\lbrace(\Z_t,(\w_t,\bmu_t))\rbrace_{t\ge2}$ be the sequences of iterates generated by \cref{alg:EGregularized} (with $\mK=\reals^d_+$) with a fixed step-size
\begin{align*}
& \eta \le 
\min\left\lbrace\begin{array}{l}\frac{1}{\sqrt{2}\sqrt{(\beta+\max_{\blambda\in\mD_{\lambda}}\Vert\blambda\Vert_1\beta_g)^2+\Vert\mA\Vert^2+\beta_g^2}},\frac{1}{2\max\lbrace \Vert\mA\Vert,\sqrt{d}\max_{\X\in\mD_X}\max_{i\in[d]}\Vert\nabla{}g_i(\X)\Vert_F\rbrace},
\\ \frac{1}{\beta+\max_{\blambda\in\mD_{\lambda}}\Vert\blambda\Vert_1\beta_g+\sqrt{2}\max\lbrace \Vert\mA\Vert,\max_{\X\in\mD_X}\max_{i\in[d]}\Vert\nabla{}g_i(\X)\Vert_F\rbrace}
%,\frac{1}{\sqrt{\Vert\mA\Vert^2+\beta_g^2}}
\end{array}\right\rbrace.
\end{align*}
Assume the initialization $(\X_1,(\y_1,\blambda_1))$ satisfies that $\Vert(\X_1,\y_1,\blambda_1)-(\X^*,\y^*,\blambda^*)\Vert_F\le R_0(r)$ where
\begin{align*}
& R_0(r):= 
\min\left\lbrace1,\frac{(1/2)\eta}{1+2\eta\max\lbrace\beta+\max\limits_{\blambda\in\mD_{\lambda}}\Vert\blambda\Vert_1\beta_g,\Vert\mA\Vert,\sqrt{d}\max\limits_{\X\in\mathcal{D}_X}\max
\limits_{i\in[d]}\Vert\nabla{}g_i(\X)\Vert_F\rbrace}\right\rbrace\delta(r).
\end{align*}
Then, for all $t\ge1$ the projections $\Pi_{\mbS^n_+}[\cdot]$ in \cref{alg:EGregularized} could be replaced with rank-r truncated projections without changing the sequences $\lbrace(\X_t,(\y_t,\blambda_t))\rbrace_{t\ge1}$ and $\lbrace(\Z_t,(\w_t,\bmu_t))\rbrace_{t\ge2}$. Moreover, for all $T\ge0$ it holds that
%\textcolor{red}{
\begin{align*}
 f\left(\frac{1}{T}\sum_{t=1}^T\Z_{t+1}\right)-f(\X^*) & \le R^2/(2\eta T),
\\   \left\Vert\mA\left(\frac{1}{T}\sum_{t=1}^T\Z_{t+1}\right)-\b\right\Vert_2
& \le R^2/(2\Vert\y^*\Vert_2\eta T),
\\ \sum_{i\in\lbrace i\in[d]~\vert~g_i\left(\frac{1}{T}\sum_{t=1}^T\Z_{t+1}\right)>0\rbrace}g_i\left(\frac{1}{T}\sum_{t=1}^T\Z_{t+1}\right) & \le R^2/(2\eta T),
\end{align*}%}
where $R^2:=(\Vert\X_1-\X^*\Vert_F^2+\max_{\y\in\mC_y}\Vert\y_1-\y\Vert_2^2+\max_{\blambda\in\mC_{\lambda}}\Vert\blambda_1-\blambda\Vert_2^2)$ such that $\mC_y:=\lbrace\y\in\reals^m\ \vert\ \Vert\y\Vert_2\le2\Vert\y^*\Vert_2\rbrace$ and $\mC_{\lambda}:=\lbrace\blambda\in\reals^d_+\ \vert\ \Vert\blambda\Vert_2\le\Vert\blambda^*\Vert_2+\sqrt{d}\rbrace$.

Furthermore, if we take a step-size 
\begin{align*}
& \eta = 
\min\left\lbrace\begin{array}{l}\frac{1}{2\sqrt{(\beta+\max_{\blambda\in\mD_{\lambda}}\Vert\blambda\Vert_1\beta_g)^2+\Vert\mA\Vert^2+\beta_g^2}},\frac{1}{2\max\lbrace \Vert\mA\Vert,\sqrt{d}\max_{\X\in\mD_X}\max_{i\in[d]}\Vert\nabla{}g_i(\X)\Vert_F\rbrace},
\\ \frac{1}{\beta+\max_{\blambda\in\mD_{\lambda}}\Vert\blambda\Vert_1\beta_g+\sqrt{2}\max\lbrace \Vert\mA\Vert,\max_{\X\in\mD_X}\max_{i\in[d]}\Vert\nabla{}g_i(\X)\Vert_F\rbrace}
%,\frac{1}{\sqrt{\Vert\mA\Vert^2+\beta_g^2}}
\end{array}\right\rbrace,
\end{align*}
then for all $T\ge0$ it holds that
\begin{align*}
 f(\Z_{T+1})-f(\X^*) & \le R_1/(\eta\sqrt{T}),
\\ \Vert\mA(\Z_{T+1})-\b\Vert_2 & \le R_1/(\Vert\y^*\Vert_2\eta\sqrt{T})
\\ \sum_{i\in\lbrace i\in[d]~\vert~g_i(\Z_{T+1})>0\rbrace}g_i(\Z_{T+1}) & \le R_1/(\eta \sqrt{T}),
\end{align*}
where
$R_1:=3\sqrt{2} \max\lbrace 2R_0(r),3\sqrt{2}\Vert\y^*\Vert_2,\sqrt{2}(2\Vert\blambda^*\Vert+\sqrt{d})\rbrace\Vert(\X_1,\y_1,\blambda_1)-(\X^*,\y^*,\blambda^*)\Vert$.

%Then, for all $t\ge1$ the projections $\Pi_{\lbrace \X\succeq0\rbrace}[\cdot]$ could be replaced with rank-r truncated projections without changing the sequences $\lbrace(\X_t,(\y_t,\blambda_t))\rbrace_{t\ge1}$ and $\lbrace(\Z_t,(\w_t,\bmu_t))\rbrace_{t\ge2}$, and for all $T\ge0$ it holds that
%\begin{align*}
% f(\widehat{\X})-f(\X^*) & \le \frac{\Vert\X_1-\X^*\Vert_F^2+\max_{\y\in\mC_y}\Vert\y_1-\y\Vert_2^2+\max_{\blambda\in\mC_{\lambda}}\Vert\blambda_1-\blambda\Vert_2^2}{2\eta T},
%\\   \Vert\mA(\widehat{\X})-\b\Vert_2
%& \le \frac{\Vert\X_1-\X^*\Vert_F^2+\max_{\y\in\mC_y}\Vert\y_1-\y\Vert_2^2+\max_{\blambda\in\mC_{\lambda}}\Vert\blambda_1-\blambda\Vert_2^2}{2\Vert\y^*\Vert_2\eta T},
%\\ \sum_{i\in\lbrace i\in[d]~\vert~g_i(\widehat{\X})>0\rbrace}g_i(\widehat{\X}) & \le \frac{\Vert\X_1-\X^*\Vert_F^2+\max_{\y\in\mC_y}\Vert\y_1-\y\Vert_2^2+\max_{\blambda\in\mC_{\lambda}}\Vert\blambda_1-\blambda\Vert_2^2}{2\eta T},
%\end{align*}
%where $\widehat{\X}\in\argmin_{t\in[T]} \max_{(\y,\blambda)\in\mC_y\times\mC_{\lambda}} \mL(\Z_{t+1},\y,\blambda)$, $\mC_y:=\lbrace\y\in\reals^m\ \vert\ \Vert\y\Vert_2\le2\Vert\y^*\Vert_2\rbrace$, and $\mC_{\lambda}:=\lbrace\blambda\in\reals^d_+\ \vert\ \Vert\blambda\Vert_2\le\Vert\blambda^*\Vert_2+\sqrt{d}\rbrace$.

\end{theorem}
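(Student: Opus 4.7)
The argument mirrors the proof of \cref{thm:nonsmoothRegularizer}, with two modifications required to handle (i) the fact that the Lipschitz constants of $\nabla\mL$, where $\mL(\X,\y,\blambda)=f(\X)+\blambda^\top g(\X)+\y^\top(\b-\mA(\X))$, now depend on $\blambda$ through the term $\sum_i\blambda_i\nabla g_i(\X)$, and (ii) the unboundedness of the dual domain $\reals^d_+$. The first step is to restrict attention to the domain $\mD_X\times\reals^m\times\mD_\lambda$ defined in the theorem; on this set the Lipschitz constant of $\nabla_\X\mL$ in $\X$ is at most $\beta+\max_{\blambda\in\mD_\lambda}\|\blambda\|_1\beta_g$, in $(\y,\blambda)$ is at most $\sqrt{2}\max\{\|\mA\|,\sqrt{d}\max_{\X\in\mD_X,i}\|\nabla g_i(\X)\|_F\}$, and the Lipschitz constant of $\nabla_{(\y,\blambda)}\mL$ in $\X$ is bounded analogously, which produces exactly the step-size bound stated.

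Next I would invoke \cref{lemma:radius_inequalites}, which is the adaptation of \cref{lemma:radius} to this setting via \cref{lemma:range_null} applied to the extended KKT system (the complementarity condition \eqref{eq:compcond:const} implies $\range(\X^*)\subseteq\nullspace(\nabla_\X\mL^*)$ exactly as in the smooth case). This lemma guarantees that whenever $(\X,\y,\blambda),(\Z,\w,\bmu)\in\mD_X\times\reals^m\times\mD_\lambda$ and both lie within the critical radius of $(\X^*,\y^*,\blambda^*)$, the projection $\Pi_{\mbS^n_+}[\X-\eta\nabla_\X\mL(\Z,\w,\bmu)]$ has rank at most $r$, so truncated projections are exact. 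Combining this with the extragradient non-expansiveness lemma (Lemma 8 of \cite{ourExtragradient}) applied with the local $\beta_\mL$ gives, under $\eta\le 1/\beta_\mL$, the chain $\|(\X_{t+1},\y_{t+1},\blambda_{t+1})-(\X^*,\y^*,\blambda^*)\|\le\|(\X_t,\y_t,\blambda_t)-(\X^*,\y^*,\blambda^*)\|$ and $\|(\Z_{t+1},\w_{t+1},\bmu_{t+1})-(\X^*,\y^*,\blambda^*)\|\le 2\|(\X_t,\y_t,\blambda_t)-(\X^*,\y^*,\blambda^*)\|$. The $\min\{1,\cdot\}\delta(r)$ factor in $R_0(r)$ is chosen precisely so that the initial distance is at most $\delta(r)$, ensuring all iterates remain in $\mD_X\times\reals^m\times\mD_\lambda$ and Lemma~\ref{lemma:radius_inequalites} applies at every step; the tighter $(1/2)\eta/(1+\cdots)$ bound inside the $\min$ ensures the rank condition itself is satisfied.

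For the rates, I would apply the standard extragradient descent inequality (Lemma~7 of \cite{ourExtragradient}) on the restricted Lagrangian. For any $\y\in\mC_y$ and any $\blambda\in\mC_\lambda$ the averaging and convexity–concavity of $\mL$ yield $\mL(\bar\X,\y,\blambda)-\mL(\X^*,\bar\w,\bar\bmu)\le R^2/(2\eta T)$. Here it is crucial that $\mC_\lambda=\{\blambda\in\reals^d_+:\|\blambda\|_2\le\|\blambda^*\|_2+\sqrt{d}\}$, because \cref{lemma:boundf_and_norm_ineqConstrained} requires plugging in the three specific test vectors $\blambda=0$, $\blambda=\blambda^*$, and $\blambda=\blambda^*+\mathbf{1}_{\{i:g_i(\bar\X)>0\}}$, the last of which has norm at most $\|\blambda^*\|_2+\sqrt{d}$ and therefore lies in $\mC_\lambda$. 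Invoking \cref{lemma:boundf_and_norm_ineqConstrained} then converts the Lagrangian gap into the three stated bounds on $f(\bar\X)-f(\X^*)$, $\|\mA(\bar\X)-\b\|_2$, and $\sum_{i:g_i(\bar\X)>0}g_i(\bar\X)$.

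Finally, for the $\mathcal{O}(1/\sqrt{T})$ last-iterate rate I would follow the end of the proof of \cref{thm:nonsmoothRegularizer}: define the set $\mD$ of points within distance $\max\{2R_0(r),3\sqrt{2}\|\y^*\|_2,\sqrt{2}(2\|\blambda^*\|_2+\sqrt{d})\}$ of $(\X^*,\y^*,\blambda^*)$, verify that it contains all iterates as well as every test point $(\X^*,\y,\blambda)$ with $\y\in\mC_y,\blambda\in\mC_\lambda$, and then apply Theorem~6 of \cite{lastIterateDecrease} with this diameter to obtain the last-iterate saddle-gap bound; the rate is then converted via \cref{lemma:boundf_and_norm_ineqConstrained}. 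The main obstacle is the first step: because the dual domain is unbounded the global Lipschitz constants are infinite, and the entire analysis hinges on using the initialization radius to trap all iterates inside $\mD_X\times\reals^m\times\mD_\lambda$ so that the local constants appearing in the step-size become valid a posteriori.
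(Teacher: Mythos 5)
Your proposal follows essentially the same route as the paper's proof: compute the local smoothness constants of $\mL$ over $\mD_X\times\reals^m\times\mD_{\lambda}$, use the adapted radius lemma (\cref{lemma:radius_inequalites}) together with the nonexpansiveness results of \cite{ourExtragradient} to trap the iterates in that region (the paper formalizes your ``a posteriori'' validity of the local constants as an explicit induction on $t$), and then convert the averaged and last-iterate saddle-gap bounds via \cref{lemma:boundf_and_norm_ineqConstrained} and Theorem 6 of \cite{lastIterateDecrease}, with exactly the test points $\blambda=0$, $\blambda^*$, and the shifted $\blambda^*$ that motivate the definition of $\mC_{\lambda}$. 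No substantive differences or gaps.
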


\begin{proof}
We first calculate the smoothness constants of $\mL(\X,\y,\blambda)=f(\X) + \boldsymbol{\lambda}^{\top}g(\X)+\y^{\top}(\b-\mathcal{A}(\X))$ as defined in \eqref{ineq:betas} with respect to the variables $\X$ and $(\y,\blambda)$ over $\mD_X\times\reals^m\times\mD_{\lambda}$. 
Here  $\nabla_{\X}\mL=\frac{\partial \mL}{\partial \X}$ and $\nabla_{(\y,\blambda)}\mL=(\frac{\partial \mL}{\partial \y},\frac{\partial \mL}{\partial \blambda})$. We have that
\begin{align*}
\Vert\nabla_{\X}\mL(\X,\y,\blambda)-\nabla_{\X}\mL(\widetilde{\X},\y,\blambda)\Vert_{F} 
& = \Vert\nabla f(\X)-\nabla f(\widetilde{\X})+\sum_{i=1}^d\blambda_i(\nabla{}g_i(\X)-\nabla{}g_i(\widetilde{\X}))\Vert_F
\\ & \le(\beta+\beta_g\max_{\blambda\in\mathcal{D}_{\lambda}}\Vert\blambda\Vert_1)\Vert\X-\widetilde{\X}\Vert_{F},  \\
\Vert\nabla_{(\y,\blambda)}\mL(\X,\y,\blambda)-\nabla_{(\y,\blambda)}\mL(\X,\widetilde{\y},\widetilde{\blambda})\Vert_{2} & = 0 = 0\Vert(\y,\blambda)-(\widetilde{\y},\widetilde{\blambda})\Vert_{2}, \\
\Vert\nabla_{\X}\mL(\X,\y,\blambda)-\nabla_{\X}\mL(\X,\widetilde{\y},\widetilde{\blambda})\Vert_{F}
& =\Vert\mA^{\top}(\y-\widetilde{\y})+\sum_{i=1}^d(\blambda_i-\widetilde{\blambda}_i)^{\top}\nabla{}g_i(\X)\Vert_{F}
\\ & \le \Vert\mA\Vert\Vert\y-\widetilde{\y}\Vert_2+\max_{\X\in\mathcal{D}_X}\max_{i\in[d]}\Vert\nabla{}g_i(\X)\Vert_F\Vert\blambda-\widetilde{\blambda}\Vert_{1}
\\ & \le \Vert\mA\Vert\Vert\y-\widetilde{\y}\Vert_2+\sqrt{d}\max_{\X\in\mathcal{D}_X}\max_{i\in[d]}\Vert\nabla{}g_i(\X)\Vert_F\Vert\blambda-\widetilde{\blambda}\Vert_{2}
\\ & \le \sqrt{2}\max\lbrace \Vert\mA\Vert,\sqrt{d}\max_{\X\in\mathcal{D}_X}\max_{i\in[d]}\Vert\nabla{}g_i(\X)\Vert_F\rbrace\Vert(\y,\blambda)-(\widetilde{\y},\widetilde{\blambda})\Vert_{2},\\
\Vert\nabla_{(\y,\blambda)}\mL(\X,\y,\blambda)-\nabla_{(\y,\blambda)}\mL(\widetilde{\X},\y,\blambda)\Vert_{2} & = \left\Vert\left(\begin{array}{c}\mA(\X-\widetilde{\X})
\\ g(\X)-g(\widetilde{\X})\end{array}\right)\right\Vert_{2}  
= \sqrt{\Vert\mA(\X-\widetilde{\X})\Vert_2^2+\Vert g(\X)-g(\widetilde{\X})\Vert_2^2}
\\ & \le\sqrt{\Vert\mA\Vert^2+\beta_g^2}\Vert\X-\widetilde{\X}\Vert_{F}.
\end{align*}
Thus, we obtain that $\beta_X = \beta+\beta_g\max_{\blambda\in\mathcal{D}_{\lambda}}\Vert\blambda\Vert_2$, $\beta_{(y,\lambda)} = 0$, $\beta_{(y,\lambda)X} = \sqrt{\Vert\mA\Vert^2+\beta_g^2}$, $\beta_{X(y,\lambda)} =  \sqrt{2}\max\lbrace \Vert\mA\Vert,\sqrt{d}\max_{\X\in\mathcal{D}_X}\max_{i\in[d]}\Vert\nabla{}g_i(\X)\Vert_F\rbrace$, and  
\begin{align} \label{eq:betaL_ineqConstrained}
& \beta_{\mL}= \max\left\lbrace\sqrt{2}\sqrt{(\beta+\beta_g\max_{\blambda\in\mathcal{D}_{\lambda}}\Vert\blambda\Vert_1)^2+\Vert\mA\Vert^2+\beta_g^2},2\Vert\mA\Vert,2\sqrt{d}\max_{\X\in\mathcal{D}_X}\max_{i\in[d]}\Vert\nabla{}g_i(\X)\Vert_F\right\rbrace,
\end{align}
over $\mD_X\times\reals^m\times\mD_{\lambda}$.

We will prove by induction that for all $t\ge1$ all the iterates $\lbrace(\X_t,(\y_t,\blambda_t))\rbrace_{t\ge1}$ and $\lbrace(\Z_t,(\w_t,\bmu_t))\rbrace_{t\ge2}$ satisfy that
\begin{align*}
& \Vert(\X_t,\y_t,\blambda_t)-(\X^*,\y^*,\blambda^*)\Vert \le R_0(r),
\\ & \Vert(\Z_t,\w_t,\bmu_t)-(\X^*,\y^*,\blambda^*)\Vert
\le 2R_0(r).
%\min\left\lbrace,\frac{\eta}{1+\sqrt{2}\eta\max\lbrace\beta_X,\beta_{Xy}\rbrace}\right\rbrace\delta(r).
\end{align*}

By the choice of the initialization $(\X_1,\y_1,\blambda_1)$ it satisfies the condition trivially. We will assume that all iterates up to some $t\ge1$ satisfy that 
$\Vert(\X_t,\y_t,\blambda_t)-(\X^*,\y^*,\blambda^*)\Vert_F \le R_0(r)$ and $\Vert(\Z_{t},\w_{t},\bmu_{t})-(\X^*,\y^*,\blambda^*)\Vert
\le 2R_0(r)$.
Then, all the generated iterates $\lbrace(\X_t,(\y_t,\blambda_t))\rbrace_{k=1}^t,\lbrace(\Z_{k},(\w_{k},\bmu_{k}))\rbrace_{k=2}^t\subset\mD_X\times\reals^m\times\mD_{\lambda}$, and so we can take $\beta_{\mL}$ as defined in \eqref{eq:betaL_ineqConstrained}. Since we choose $\eta\le 1/\beta_{\mL}$, by invoking Lemma 8 in \cite{ourExtragradient}, we have that
\begin{align*}
&\Vert(\X_{t+1},\y_{t+1},\blambda_{t+1})-(\X^*,\y^*,\blambda^*)\Vert
\le \Vert(\X_{t},\y_{t},\blambda_{t})-(\X^*,\y^*,\blambda^*)\Vert\le R_0(r),
\end{align*}
and 
%\begin{align*}
%\Vert(\Z_{t+1},\w_{t+1},\bmu_{t+1})-(\X^*,\y^*,\blambda^*)\Vert & \le  \textcolor{red}{\left(1+\frac{1}{\sqrt{1-\eta^2\beta_{\mL}^2}}\right)}\Vert(\X_{t},\y_{t},\blambda_t)-(\X^*,\y^*,\blambda^*)\Vert
%\\ &  = (1+\sqrt{2})\Vert(\X_{t},\y_{t},\blambda_t)-(\X^*,\y^*,\blambda^*)\Vert \le (1+\sqrt{2})R_0(r),
%\end{align*}
%as desired.
from the nonexpansiveness of the Euclidean projection, we have that 
\begin{align*}
&\Vert(\Z_{t+1},\w_{t+1},\bmu_{t+1})-(\X^*,\y^*,\blambda^*)\Vert
\\ & \le \left\Vert\left(\begin{array}{c}\X_{t}-\eta\nabla_{\X}\mL(\X_t,\y_t,\blambda_t)
\\ \y_{t}+\eta\nabla_{\y}\mL(\X_t,\y_t,\blambda_t)
\\ \blambda_{t}+\eta\nabla_{\blambda}\mL(\X_t,\y_t,\blambda_t)\end{array}\right)-\left(\begin{array}{c}\X^*-\eta\nabla_{\X}\mL(\X^*,\y^*,\blambda^*)
\\ \y^*+\eta\nabla_{\y}\mL(\X^*,\y^*,\blambda^*)
\\ \blambda^*+\eta\nabla_{\blambda}\mL(\X^*,\y^*,\blambda^*)\end{array}\right)\right\Vert
\\ & \le \left\Vert\left(\begin{array}{c}\X_{t} \\ \y_{t} \\ \blambda_{t}\end{array}\right)-\left(\begin{array}{c}\X^* \\ \y^* \\ \blambda^*\end{array}\right)\right\Vert 
+\eta\left\Vert\left(\begin{array}{c}\nabla_{\X}\mL(\X_t,\y_t,\blambda_t) \\ -\nabla_{\y}\mL(\X_t,\y_t,\blambda_t) \\ -\nabla_{\blambda}\mL(\X_t,\y_t,\blambda_t)\end{array}\right)-\left(\begin{array}{c}\nabla_{\X}\mL(\X^*,\y^*,\blambda^*) \\ -\nabla_{\y}\mL(\X^*,\y^*,\blambda^*) \\ -\nabla_{\blambda}\mL(\X^*,\y^*,\blambda^*)\end{array}\right)\right\Vert
\\ & \le (1+\eta\beta_{\mL})\Vert(\X_{t},\y_{t},\blambda_{t})-(\X^*,\y^*,\blambda^*)\Vert\le 2 R_0(r),
\end{align*}
as desired. The second to last inequality follows since $(\X_{t},\y_{t},\blambda_{t})\subset\mD_X\times\reals^m\times\mD_{\lambda}$ and so $\beta_{\mL}$ is indeed the correct smoothness parameter. Therefore, we obtain that $(\X_{t+1},\y_{t+1},\blambda_{t+1}),(\Z_{t+1},\w_{t+1},\bmu_{t+1})\subset\mD_X\times\reals^m\times\mD_{\lambda}$

Since for all $t\ge1$ it holds that 
\begin{align*}
\Vert\X_{t}-\X^*\Vert & \le \Vert(\X_{t},\y_{t},\blambda_t)-(\X^*,\y^*,\blambda^*)\Vert 
\\ & \le \max\left\lbrace\Vert(\X_{t},\y_{t},\blambda_t)-(\X^*,\y^*,\blambda^*)\Vert,\Vert(\Z_{t+1},\w_{t+1},\bmu_{t+1})-(\X^*,\y^*,\blambda^*)\Vert\right\rbrace,
\end{align*}
we have that for all $t\ge1$ the condition in Lemma \ref{lemma:radius} holds for $(\X_{t},\y_{t},\blambda_{t})$ and $(\Z_{t+1},\w_{t+1},\bmu_{t+1})$, and so for all $t\ge1$ it follows that 
\begin{align*}
& \rank(\Pi_{\mbS^n_+}[\X_t-\eta\nabla{}_{\X}\mL(\X_t,\y_t,\blambda_t)])\le r
\\ & \rank(\Pi_{\mbS^n_+}[\X_t-\eta\nabla{}_{\X}\mL(\Z_{t+1},\w_{t+1},\bmu_{t+1})])\le r.
\end{align*}
Hence, the iterates of projected extragradient method will remain unchanged when replacing all projections onto the PSD cone with their rank-$r$ truncated counterparts, and so, the method will also maintain its original convergence rate.

Replacing the $\y$ variable in Theorem \ref{thm:SDP} with the product $(\y,\blambda)$, replacing the function $f(\X)$ with $f(\X)+\max_{\blambda\in\reals^d_+}\blambda^{\top}g(\X)$, and adjusting the smoothness parameters as calculated above, we have from Theorem \ref{thm:SDP} that all projections can be replaced with rank-$r$ truncated  projections while maintaining the original convergence rate of the extragradient method.
Denoting $\mC_y=\lbrace\y\in\reals^m\ \vert\ \Vert\y\Vert_2\le2\Vert\y^*\Vert_2\rbrace$ and $\mC_{\lambda}=\lbrace\blambda\in\reals^d_+\ \vert\ \Vert\blambda\Vert_2\le\Vert\blambda^*\Vert_2+\sqrt{d}\rbrace$, we also have that the inequality of \eqref{ineq:inProof1111} can be replaced 
%\textcolor{red}{
with the following inequality with for all $\y\in\mC_y$ and $\blambda\in\mC_{\lambda}$
\begin{align} \label{ineq:inProof222}
& \frac{1}{T}\sum_{t=1}^T \mL(\Z_{t+1},\y,\blambda) - \frac{1}{T}\sum_{t=1}^T \mL(\X^*,\w_{t+1},\bmu_{t+1})\le\varepsilon,
\end{align}%}
where $\varepsilon:=\frac{1}{2\eta T}\left(\Vert\X_1-\X^*\Vert_F^2+\max\limits_{\y\in\mC_y}\Vert\y_1-\y\Vert_2^2+\max\limits_{\blambda\in\mC_{\lambda}}\Vert\blambda_1-\blambda\Vert_2^2\right)$.
The second term in the LHS of \eqref{ineq:inProof222} is equivalent to 
\begin{align} \label{ineq:inProof221}
\frac{1}{T}\sum_{t=1}^T \mL(\X^*,\w_{t+1},\bmu_{t+1}) & = f(\X^*)+\frac{1}{T}\sum_{t=1}^T \bmu_{t+1}^{\top}g(\X^*)+\frac{1}{T}\sum_{t=1}^T\w_{t+1}^{\top}(\b-\mA(\X^*)) \nonumber 
\\ & = \mL\left(\X^*,\frac{1}{T}\sum_{t=1}^T\w_{t+1},\frac{1}{T}\sum_{t=1}^T\bmu_{t+1}\right).
\end{align}
 Plugging \eqref{ineq:inProof221} into \eqref{ineq:inProof222} %\textcolor{red}{
 and using the convexity of $\mL(\cdot,\y,\blambda)$ we have in particular that for all $\y\in\mC_y$ and $\blambda\in\mC_{\lambda}$ it holds that
\begin{align*}
& \mL\left(\frac{1}{T}\sum_{t=1}^T\Z_{t+1},\y,\blambda\right) - \mL\left(\X^*,\frac{1}{T}\sum_{t=1}^T\w_{t+1},\frac{1}{T}\sum_{t=1}^T\bmu_{t+1}\right) \le \varepsilon.
\end{align*}
%Denoting $\widehat{\X}\in\argmin_{k\in[T]} \max_{\y\in\mC_y,\ \blambda\in\mC_{\lambda}} \mL(\Z_{k+1},\y,\blambda)$ we have that for every $\y\in\mC_y$ and $\blambda\in\mC_{\lambda}$ it holds that
%\begin{align*}
%&  \mL(\widehat{\X},\y,\blambda) - \mL\left(\X^*,\frac{1}{T}\sum_{t=1}^T\w_{t+1},\frac{1}{T}\sum_{t=1}^T\bmu_{t+1}\right) \le \varepsilon.
%\end{align*}
Invoking Lemma \ref{lemma:boundf_and_norm_ineqConstrained} we obtain that $f\left(\frac{1}{T}\sum_{t=1}^T\Z_{t+1}\right)-f(\X^*)\le\varepsilon$, $\left\Vert\mA\left(\frac{1}{T}\sum_{t=1}^T\Z_{t+1}\right)-\b\right\Vert_2 \le\varepsilon/\Vert\y^*\Vert_2$, and $\sum_{i\in\lbrace i\in[d]~\vert~g_i\left(\frac{1}{T}\sum_{t=1}^T\Z_{t+1}\right)>0\rbrace}g_i\left(\frac{1}{T}\sum_{t=1}^T\Z_{t+1}\right)
\le \varepsilon$.%}

For the second part of the theorem, 
denote $$\mD=\left\lbrace(\X,\y,\blambda)\in\mathbb{S}^n_+\times\reals^m\times\reals^d_+~|~\Vert(\X,\y,\blambda)-(\X^*,\y^*,\blambda^*)\Vert\le \max\lbrace 2R_0(r),3\sqrt{2}\Vert\y^*\Vert_2,\sqrt{2}(2\Vert\blambda^*\Vert+\sqrt{d}\rbrace\right\rbrace.$$ 
%\textcolor{red}{As we showed above, all iterates are within the ball $\mD$. By the convexity of $\mL(\cdot,\w_{T+1},\bmu_{T+1})$, the linearity of $\mL(\Z_{T+1},\cdot,\blambda)$, and concavity of $\mL(\Z_{T+1},\w_{T+1},\cdot)$, for all $(\X,\y,\blambda)\in\mD$
%%$\X\in\mathbb{S}^n_+\cap\mD_{X}$ and $\y\in\mD_{y}$, 
%it holds that}
%\begin{align} \label{ineq:inProof0002}
%\mL(\Z_{T+1},\w_{T+1},\bmu_{T+1})-\mL(\X,\w_{T+1},\bmu_{T+1}) & \le \langle\Z_{T+1}-\X,\nabla_{\X}\mL(\Z_{T+1},\w_{T+1},\bmu_{T+1})\rangle \nonumber
%\\ \mL(\Z_{T+1},\y,\blambda)-\mL(\Z_{T+1},\w_{T+1},\blambda) & = \langle\w_{T+1}-\y,-\nabla_{\y}\mL(\Z_{T+1},\w_{T+1},\blambda)\rangle \nonumber
%\\ & = \langle\w_{T+1}-\y,-\nabla_{\y}\mL(\Z_{T+1},\w_{T+1},\bmu_{T+1})\rangle, \nonumber
%\\ \mL(\Z_{T+1},\w_{T+1},\blambda)-\mL(\Z_{T+1},\w_{T+1},\bmu_{T+1}) & \le \langle\bmu_{T+1}-\blambda,-\nabla_{\blambda}\mL(\Z_{T+1},\w_{T+1},\bmu_{T+1})\rangle.
%\end{align}
For all $\y\in\mC_y$ and $\blambda\in\mC_{\lambda}$ it holds that $(\X^*,\y,\blambda)\in\mD$ since 
\begin{align*}
\Vert(\X^*,\y,\blambda)-(\X^*,\y^*,\blambda^*)\Vert & = \Vert(\y,\blambda)-(\y^*,\blambda^*)\Vert_2=\sqrt{\Vert\y-\y^*\Vert_2^2+\Vert\blambda-\blambda^*\Vert_2^2}
\\ & \le\sqrt{(\Vert\y\Vert_2+\Vert\y^*\Vert_2)^2+(\Vert\blambda\Vert_2+\Vert\blambda^*\Vert_2)^2}\le\sqrt{(3\Vert\y^*\Vert_2)^2+(2\Vert\blambda^*\Vert+\sqrt{d})^2}
\\ & \le\sqrt{2}\max\lbrace 3\Vert\y^*\Vert_2,2\Vert\blambda^*\Vert+\sqrt{d}\rbrace.
\end{align*}
The inequalities in \eqref{ineq:inProof0001} still hold here for our choice of $\mD$. Therefore, summing the two inequalities in \eqref{ineq:inProof0001} and taking the maximum over all  $(\X',\y',\blambda')\in\mD$ we obtain that in particular for all $\y\in\mC_y$ and $\blambda\in\mC_{\lambda}$ it holds that
\begin{align*}
& \mL(\Z_{T+1},\y,\blambda)-\mL(\X^*,\w_{T+1},\bmu_{T+1})
\\ & \le \max_{(\X',\y',\blambda')\in\mD}\left(\begin{array}{l} \langle\Z_{T+1}-\X',\nabla_{\X}\mL(\Z_{T+1},\w_{T+1},\bmu_{T+1})\rangle + \langle\w_{T+1}-\y',-\nabla_{\y}\mL(\Z_{T+1},\w_{T+1},\bmu_{T+1})\rangle
\\ + \langle\bmu_{T+1}-\blambda',-\nabla_{\blambda}\mL(\Z_{T+1},\w_{T+1},\bmu_{T+1})\rangle\end{array}\right)
\\ & \underset{(a)}{\le} \frac{1}{\sqrt{T}}\frac{3 \max\lbrace 2R_0(r),3\sqrt{2}\Vert\y^*\Vert_2,\sqrt{2}(2\Vert\blambda^*\Vert+\sqrt{d})\rbrace\Vert(\X_1,\y_1,\blambda_1)-(\X^*,\y^*,\blambda^*)\Vert}{\eta\sqrt{1-\eta^2\beta_{\mL}^2}}
\\ & \underset{(b)}{\le} \frac{1}{\sqrt{T}}\frac{3\sqrt{2} \max\lbrace 2R_0(r),3\sqrt{2}\Vert\y^*\Vert_2,\sqrt{2}(2\Vert\blambda^*\Vert+\sqrt{d})\rbrace\Vert(\X_1,\y_1,\blambda_1)-(\X^*,\y^*,\blambda^*)\Vert}{\eta}.
\end{align*}
where (a) follows from Theorem 6 in \cite{lastIterateDecrease} and (b) follows from our choice of step-size which satisfies that $\eta\le 1/(\sqrt{2}\beta_{\mL})$. Invoking \cref{lemma:boundf_and_norm_ineqConstrained} we obtain the result.
\end{proof}

\section{Bounds on Norms of Optimal Dual Solutions}
\label{appx:boundsOptimalDualSolutions}

In the following lemma we show that under Slater's condition there exists a bounded dual optimal solution for SDPs.
Many of the arguments in the proof are presented in \cite{boundInequalityDualVariables1, boundInequalityDualVariables2}, yet we bring the full proof for completeness. 

\begin{lemma}
Let $\X^*\succeq0$ be an optimal solution to Problem \eqref{problem:ineqConstrained} and assume that Assumptions \ref{Ass:slater_inequality} and \ref{Ass:primalBounded} hold for some $\bar{\X}\in\mathbb{S}^n_{++}$. Let $\vect(\cdot)$ be an operator that transforms a matrix into a vector by vertically stacking the columns of the matrix, and denote $\M:=\mat(\mathcal{A}) = (\vect(\A_1),\ldots,\vect(\A_m))\in\reals^{n^2\times m}$. Then, for any corresponding dual solution $\boldsymbol{\lambda}^*\in\reals^d_+$ it holds that
\begin{align*}
\Vert\boldsymbol{\lambda}^*\Vert_1\le \frac{1}{\min\limits_{i\in[d]}\lbrace-g_i(\bar{\X})\rbrace}(f(\bar{\X})-f(\X^*))
\end{align*}
and there exist a corresponding dual solution $\y^*\in\reals^m$ such that
\begin{align*}
& \Vert\y^*\Vert_2\le 
\frac{1}{\sigma_{min}(\M)}\left(\Vert\nabla f(\X^*)\Vert_F + \left(\frac{\max\limits_{i\in[d]}\Vert\nabla g_i(\X^*)\Vert_F}{\min\limits_{i\in[d]}\lbrace-g_i(\bar{\X})\rbrace}+\frac{1}{\lambda_{n}(\bar{\X})}\right)(f(\bar{\X})-f(\X^*))\right),
\end{align*}
where $\sigma_{min}(\M)$ is the smallest non-zero singular value of $\M$.

If $g(\X)\equiv0$ then the bound on $\Vert\boldsymbol{\lambda}^*\Vert_1$ is unnecessary and the bound on $\Vert\y^*\Vert_2$ can be replaced with
\begin{align*}
& \Vert\y^*\Vert_2\le 
\frac{1}{\sigma_{min}(\M)}\left(\Vert\nabla f(\X^*)\Vert_F + \frac{1}{\lambda_{n}(\bar{\X})}(f(\bar{\X})-f(\X^*))\right).
\end{align*}
\end{lemma}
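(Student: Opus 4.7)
The plan is to prove the two bounds separately, then handle the $g \equiv 0$ simplification as a trivial corollary.

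For the bound on $\Vert\blambda^*\Vert_1$, I would invoke the saddle-point inequality $f(\X^*) = \mL(\X^*,\y^*,\blambda^*) \le \mL(\bar{\X},\y^*,\blambda^*)$. Since $\mA(\bar{\X})=\b$, the $\y^*$ term vanishes and one gets $f(\X^*) \le f(\bar{\X}) + (\blambda^*)^\top g(\bar{\X})$. Rearranging, and using $\blambda^*_i \ge 0$ together with $-g_i(\bar{\X})>0$, yields
\[
\Vert\blambda^*\Vert_1 \cdot \min_{i\in[d]}\{-g_i(\bar{\X})\} \le \sum_i \blambda^*_i(-g_i(\bar{\X})) \le f(\bar{\X})-f(\X^*),
\]
which is the stated bound.

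For the bound on $\Vert\y^*\Vert_2$, I would start from the KKT stationarity $\mA^\top(\y^*) = \nabla f(\X^*) + \sum_i \blambda^*_i \nabla g_i(\X^*) - \Z^*$, where $\Z^* := \nabla_{\X}\mL^* \succeq 0$. Vectorizing gives $\M\y^* = \vect(\mA^\top(\y^*))$; since adding any vector in $\nullspace(\M)$ to $\y^*$ preserves dual optimality, we may replace $\y^*$ by its orthogonal projection onto $\range(\M^\top)$, so $\Vert\y^*\Vert_2 \le \Vert\mA^\top(\y^*)\Vert_F/\sigma_{\min}(\M)$. Triangle inequality then leaves me with three pieces: $\Vert\nabla f(\X^*)\Vert_F$ (direct), $\sum_i\blambda^*_i\Vert\nabla g_i(\X^*)\Vert_F \le \Vert\blambda^*\Vert_1 \max_i\Vert\nabla g_i(\X^*)\Vert_F$ which is controlled by the first part, and $\Vert\Z^*\Vert_F$.

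The main obstacle is bounding $\Vert\Z^*\Vert_F$. My plan is: (i) since $\Z^*\succeq 0$, the inequality $\Vert\Z^*\Vert_F \le \trace(\Z^*)$ holds; (ii) since $\bar{\X}\succeq \lambda_n(\bar{\X})\I$, one has $\langle\Z^*,\bar{\X}\rangle \ge \lambda_n(\bar{\X})\trace(\Z^*)$; so it suffices to upper-bound $\langle\Z^*,\bar{\X}\rangle$. For this I would use complementarity $\langle\Z^*,\X^*\rangle=0$ (from Lemma~\ref{lemma:range_null}-type argument) to rewrite $\langle\Z^*,\bar{\X}\rangle = \langle\Z^*,\bar{\X}-\X^*\rangle$, and convexity of $f$:
\[
f(\bar{\X})-f(\X^*) \ge \langle\nabla f(\X^*),\bar{\X}-\X^*\rangle.
\]
Substituting the KKT expression for $\nabla f(\X^*)$ yields on the right-hand side three inner products: the $\mA^\top(\y^*)$ term equals $(\y^*)^\top(\mA(\bar{\X})-\mA(\X^*)) = 0$; the $\blambda_i^*\nabla g_i$ term is nonnegative after flipping sign, via convexity of $g_i$ and $\blambda_i^* g_i(\X^*)=0$, $g_i(\bar{\X})<0$, $\blambda_i^*\ge 0$; what remains is exactly $\langle\Z^*,\bar{\X}-\X^*\rangle$. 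Thus $\langle\Z^*,\bar{\X}\rangle \le f(\bar{\X})-f(\X^*)$ and hence $\Vert\Z^*\Vert_F \le (f(\bar{\X})-f(\X^*))/\lambda_n(\bar{\X})$.

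Assembling the three triangle-inequality pieces, substituting the $\Vert\blambda^*\Vert_1$ bound from the first part, and factoring $f(\bar{\X})-f(\X^*)$ out of the $\nabla g_i$ and $\Z^*$ terms produces the displayed formula. The $g\equiv 0$ case is immediate: the inequality constraints disappear from the KKT condition, the $\blambda^*$ bound becomes vacuous, and the $\max_i\Vert\nabla g_i(\X^*)\Vert_F / \min_i\{-g_i(\bar{\X})\}$ summand drops out, leaving the simplified expression. The delicate points are the projection argument that lets us assume $\y^*\in\range(\M^\top)$ and the careful sign analysis showing that the $\blambda^*_i\nabla g_i$ cross-term works in our favor rather than against us.
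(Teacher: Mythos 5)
Your proposal is correct, and its overall skeleton matches the paper's: a Slater-point inequality for $\Vert\blambda^*\Vert_1$, a trace-type bound on $\Z^*$ through $\langle\Z^*,\bar{\X}\rangle$, and a least-norm treatment of the linear system $\M\y^*=\vect\bigl(\nabla f(\X^*)+\sum_i\blambda^*_i\nabla g_i(\X^*)-\Z^*\bigr)$ assembled by the triangle inequality. The one place where you genuinely diverge is the key inequality $\langle\Z^*,\bar{\X}\rangle\le f(\bar{\X})-f(\X^*)$: the paper gets both this and the $\blambda^*$ bound from a single strong-duality evaluation of the dual function at $\bar{\X}$, namely $f(\X^*)\le f(\bar{\X})+{\blambda^*}^{\top}g(\bar{\X})-\langle\Z^*,\bar{\X}\rangle$, dropping whichever nonpositive term is not needed, whereas you re-derive it from scratch via the gradient inequality for $f$, convexity of each $g_i$, complementary slackness $\blambda^*_ig_i(\X^*)=0$, $\langle\Z^*,\X^*\rangle=0$, and $\mA(\bar{\X})=\mA(\X^*)=\b$ — a slightly longer but fully elementary argument that also makes the sign analysis of the $\blambda^*_i\nabla g_i$ cross-term explicit. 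Two further cosmetic differences: you replace the paper's von Neumann trace inequality by the simpler $\bar{\X}\succeq\lambda_n(\bar{\X})\I$ bound (and $\Vert\Z^*\Vert_F\le\trace(\Z^*)$), and you phrase the $\y^*$ step as projecting $\y^*$ onto $\range(\M^{\top})$ rather than taking the minimum-norm solution $\M^{\pinv}\vect(\cdot)$; these are equivalent, and your explicit remark that adding a $\nullspace(\M)$ vector preserves dual optimality (all KKT conditions see $\y$ only through $\mA^{\top}(\y)$) is exactly the justification the paper leaves implicit. No gaps; the $g\equiv0$ simplification is handled the same way in both.
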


\begin{proof}

Under Assumptions \ref{Ass:slater_inequality} and \ref{Ass:primalBounded} strong duality holds. Therefore, $f(\X^*)$ is equal to the optimal dual solution, and so,
\begin{align} \label{ineq:strongDualitySlaterPoint}
f(\X^*) & =\min_{\X\succeq0}\lbrace f(\X) + {\boldsymbol{\lambda}^*}^{\top}g(\X)+{\y^*}^{\top}(\b-\mathcal{A}(\X))-\langle\Z^*,\X\rangle\rbrace \nonumber
\\ & \le f(\bar{\X}) + {\boldsymbol{\lambda}^*}^{\top}g(\bar{\X})+{\y^*}^{\top}(\b-\mathcal{A}(\bar{\X}))-\langle\Z^*,\bar{\X}\rangle \nonumber
\\ & = f(\bar{\X}) + {\boldsymbol{\lambda}^*}^{\top}g(\bar{\X})-\langle\Z^*,\bar{\X}\rangle.
\end{align}
$\Z^*\succeq0$ and $\bar{\X}\succ0$, and therefore, it follows that $\langle\Z^*,\bar{\X}\rangle\ge0$. Thus, rearranging and using the facts that $g(\bar{\X})\le0$ and $\boldsymbol{\lambda}^*\ge0$,
\begin{align*}
\min_{i\in[d]}\lbrace-g_i(\bar{\X})\rbrace\sum_{i=1}^d\boldsymbol{\lambda}^*_i \le -\sum_{i=1}^d\boldsymbol{\lambda}^*_i g_i(\bar{\X}) \le f(\bar{\X})-f(\X^*).
\end{align*}
If $g(\X)\not=0$ then since $\boldsymbol{\lambda}^*\ge0$ this is equivalent to 
\begin{align} \label{ineq:lambdaStarBound}
\Vert\boldsymbol{\lambda}^*\Vert_1\le \frac{1}{\min_{i\in[d]}\lbrace-g_i(\bar{\X})\rbrace}(f(\bar{\X})-f(\X^*)).
\end{align}

In addition, from \eqref{ineq:strongDualitySlaterPoint} since ${\boldsymbol{\lambda}^*}^{\top}g(\bar{\X})\le0$, $\Z^*\succeq0$, and $\bar{\X}\succ0$ and by invoking von Neumann's trace inequality,
\begin{align*}
\lambda_{n}(\bar{\X})\sum_{i=1}^n\lambda_i(\Z^*) & \le \sum_{i=1}^n\lambda_i(\Z^*)\lambda_{n-i+1}(\bar{\X}) \le \langle\Z^*,\bar{\X}\rangle \le f(\bar{\X}) - f(\X^*).
\end{align*}

Since $\Z^*\succeq0$ and $\bar{\X}\succ0$ this is equivalent to 
\begin{align} \label{ineq:ZStarBound}
\Vert\Z^*\Vert_* \le \frac{1}{\lambda_{n}(\bar{\X})}(f(\bar{\X}) - f(\X^*)).
\end{align}

Strong duality holds, and so, by the KKT conditions optimal primal-dual solutions $\X^*$ and $(\y^*,\blambda^*,\Z^*)$ satisfy that
\begin{align} \label{eq:KKTforConstrainedProblem}
\sum_{i=1}^m \y^*_i\A_i = \nabla f(\X^*) + \sum_{i=1}^d \boldsymbol{\lambda}^*_i\nabla g_i(\X^*)-\Z^*.
\end{align}

The linear system in \eqref{eq:KKTforConstrainedProblem} can be written as the following overdetermined linear system:
\begin{align*}%\label{eq:KKTforConstrainedProblemVectorized}
\mat(\mathcal{A})\y^* = \vect\left(\nabla f(\X^*) + \sum_{i=1}^d \boldsymbol{\lambda}^*_i\nabla g_i(\X^*)-\Z^*\right).
\end{align*}
Note that if $\A_1,\cdots,\A_m$ are linearly dependent then there are infinitely many solutions to this linear system.

Denote $\M:=\mat(\mathcal{A})$ and $\M^{\dagger}$ its pseudo-inverse. Thus, the solution $\widetilde{\y}^*$ to the linear system of minimum norm satisfies that
\begin{align*}
\widetilde{\y}^* = \M^{\dagger}\vect\left(\nabla f(\X^*) + \sum_{i=1}^d \boldsymbol{\lambda}^*_i\nabla g_i(\X^*)-\Z^*\right),
\end{align*}
and assuming $g(\X)\not=0$ and using \eqref{ineq:lambdaStarBound} and \eqref{ineq:ZStarBound} it can be bounded by
\begin{align}
\Vert\widetilde{\y}^*\Vert_2 & 
 \le \Vert\M^{\dagger}\Vert_2\left\Vert\vect\left(\nabla f(\X^*) + \sum_{i=1}^d \boldsymbol{\lambda}^*_i\nabla g_i(\X^*)-\Z^*\right)\right\Vert_2 \nonumber
\\ & = \sigma_{max}(\M^{\dagger})\left\Vert \nabla f(\X^*) + \sum_{i=1}^d \boldsymbol{\lambda}^*_i\nabla g_i(\X^*)-\Z^*\right\Vert_F \nonumber
\\ & \le \frac{1}{\sigma_{min}(\M)}\left(\Vert\nabla f(\X^*)\Vert_F + \sum_{i=1}^d \vert\boldsymbol{\lambda}^*_i\vert\Vert\nabla g_i(\X^*)\Vert_F+\Vert\Z^*\Vert_F\right) \nonumber
\\ & \le \frac{1}{\sigma_{min}(\M)}\left(\Vert\nabla f(\X^*)\Vert_F + \Vert\boldsymbol{\lambda}^*\Vert_1\max_{i\in[d]}\Vert\nabla g_i(\X^*)\Vert_F+\Vert\Z^*\Vert_*\right) \label{ineq:midProof}
\\ & \le \frac{1}{\sigma_{min}(\M)}\left(\Vert\nabla f(\X^*)\Vert_F + \left(\frac{\max\limits_{i\in[d]}\Vert\nabla g_i(\X^*)\Vert_F}{\min\limits_{i\in[d]}\lbrace-g_i(\bar{\X})\rbrace}+\frac{1}{\lambda_{n}(\bar{\X})}\right)(f(\bar{\X})-f(\X^*))\right), \nonumber
\end{align}
as desired. If $g(\X)\equiv0$ then \eqref{ineq:midProof} can be bounded as $\Vert\y^*\Vert_2\le 
\frac{1}{\sigma_{min}(\M)}\left(\Vert\nabla f(\X^*)\Vert_F + \frac{1}{\lambda_{n}(\bar{\X})}(f(\bar{\X})-f(\X^*))\right)$.
\end{proof}

\section{Additional Details Regarding the Empirical Evidence in \cref{sec:experiments}}
\label{appx:moreExperiments}

As stated in \cref{sec:experiments}, we initialize the $\X$ variable with the matrix $\X_1=(1/\trace(\boldsymbol{\Lambda}_{r^*}))\sign(\V_{r^*})\boldsymbol{\Lambda}_{r^*}\sign(\V_{r^*})^{\top}$, where $r^*=\rank(\X^*)$ and $\V_{r^*}(-\boldsymbol{\Lambda}_{r^*}){\V_{r^*}}^{\top}$ is the rank-$r^*$ approximation of $-\C$. Since $\C$ is not positive or negative semidefinite there is no guarantee that the values on the diagonal of $\boldsymbol{\Lambda}_{r^*}$ are all with the same sign. However, since we take only the bottom $r^*$ eigenvalues of $\C$ and $r^*$ is small, in practice in all cases we obtain that the values on the diagonal of $\boldsymbol{\Lambda}_{r^*}$ are all negative, and thus $\X_1\succeq0$ as desired. In addition, this choice of $\X_1$ is feasible as $\diag(\X_1)=1$.

We run the experiments in Matlab R2022a. For computing the rank-r SVDs necessary for the rank-r truncated projections we use the $\texttt{eigs}$ function with the default maximum size of Krylov subspace of $\max\lbrace 2r,20\rbrace$, yet we lower the convergence tolerance parameter to $10^{-5}$  to ensure all the eigenvalues converge.

In \cref{table:etaTChoices} we specify the step-size $\eta$ and number of iterations $T$ we used for the experiments.
\begin{table}[h]
\caption{Step-size and number of iterations in each experiment in \cref{sec:experiments}.}
\label{table:etaTChoices}
\vskip 0.15in
\begin{center}
\begin{small}
\begin{sc}
\begin{tabular}{lcccccccc}
\toprule
graph  & $\textbf{G1}$-$\textbf{G10}$ & $\textbf{G11}$ & $\textbf{G12}$ & $\textbf{G13}$ & $\textbf{G14}$-$\textbf{G15}$ & $\textbf{G16}$ & $\textbf{G17}$ & $\textbf{G18}-\textbf{G20}$ \\
\midrule
$\eta$ & $4$ & $2$ & $1.9$ & $2.2$ & $2.4$ & $2.2$ & $2.3$ & $2.8$ \\
$T$ & $1000$ & $20000$ & $10000$ & $10000$ & $5000$ & $5000$ & $5000$ & $5000$ \\
\bottomrule
\end{tabular}
\end{sc}
\end{small}
\end{center}
\vskip -0.1in
\end{table}

In \cref{fig:time_tableTC_full} we plot the time it takes to run \cref{alg:EG} with varying SVD ranks for the Max-Cut problem for all the graphs missing from \cref{fig:time_tableTC_some}. All plots are averaged over 5 i.i.d. runs.
\begin{figure*}[t] 
\vskip 0.2in
\begin{center}
\centering   
\subfigure[G1]{\includegraphics[width=0.24\textwidth]{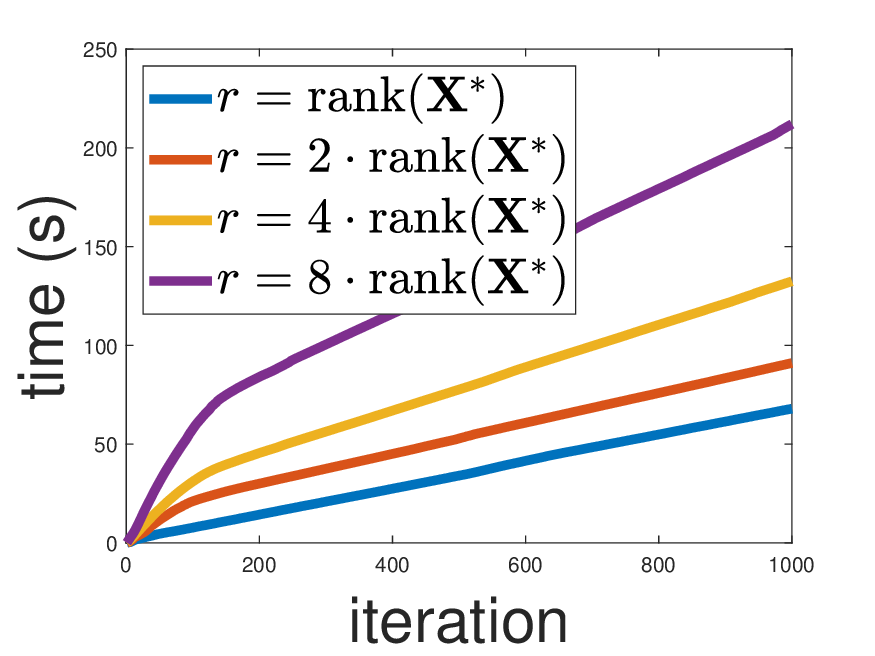}}
\subfigure[G2]{\includegraphics[width=0.24\textwidth]{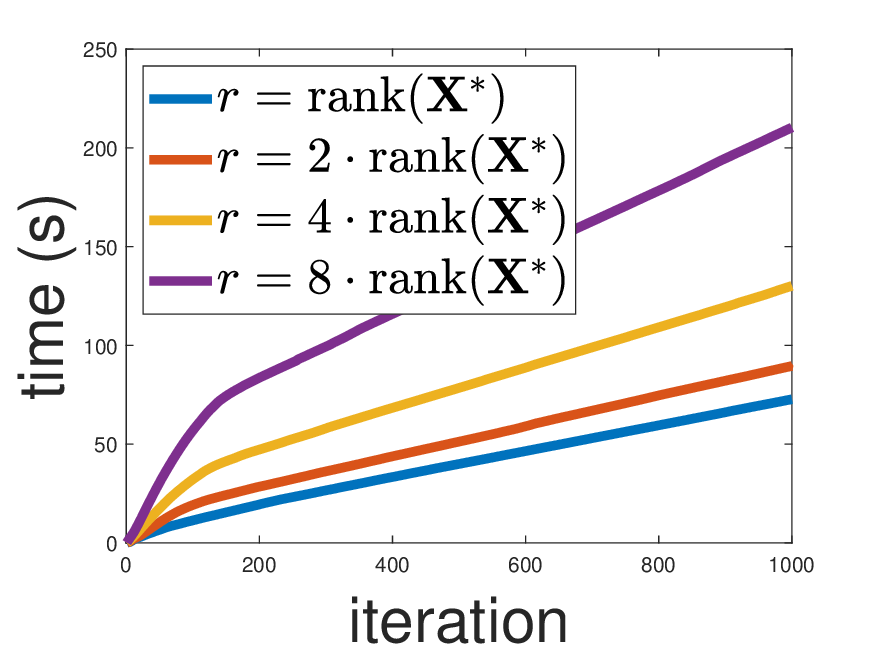}}
\subfigure[G3]{\includegraphics[width=0.24\textwidth]{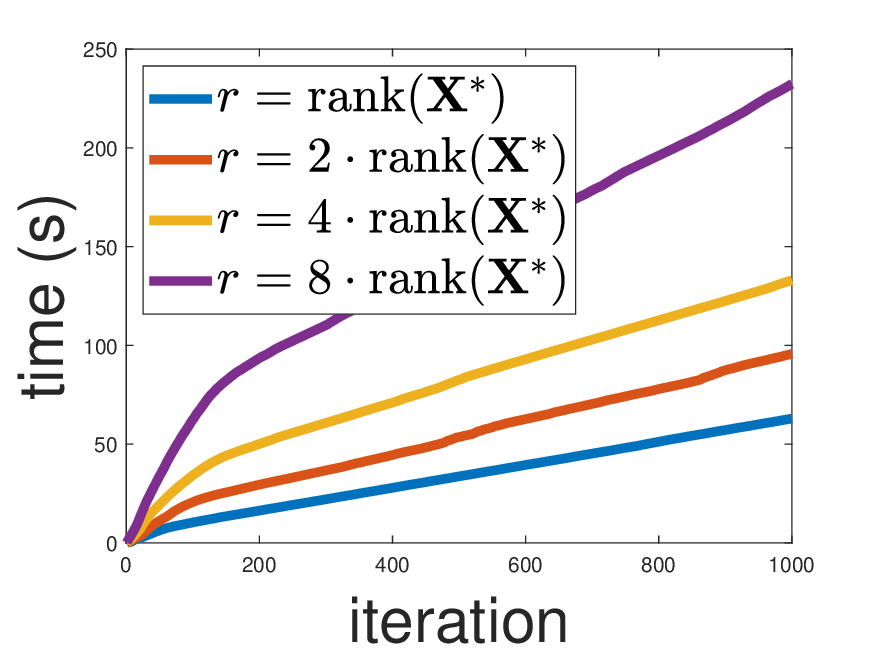}}
\subfigure[G4]{\includegraphics[width=0.24\textwidth]{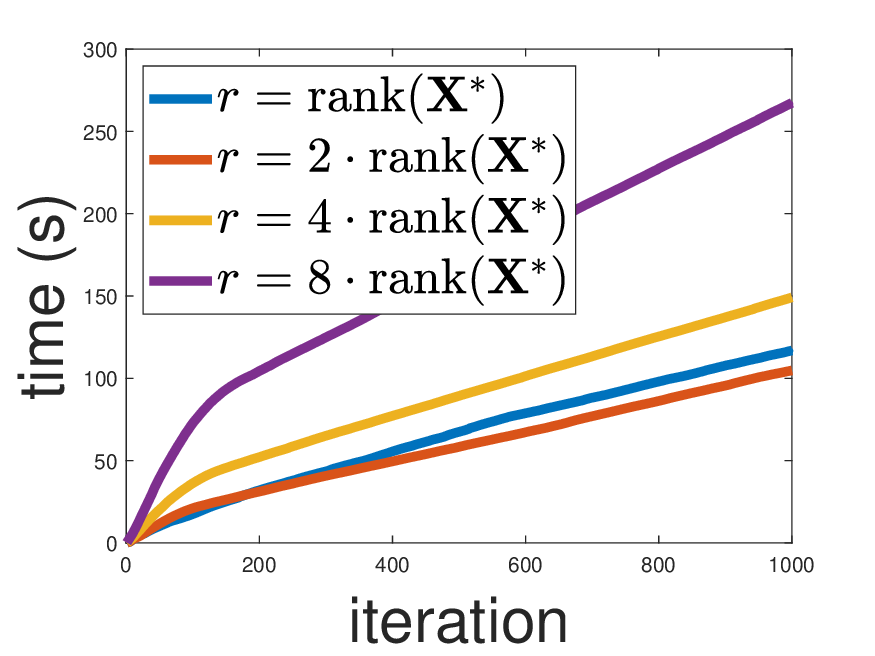}}
\medskip
\subfigure[G5]{\includegraphics[width=0.24\textwidth]{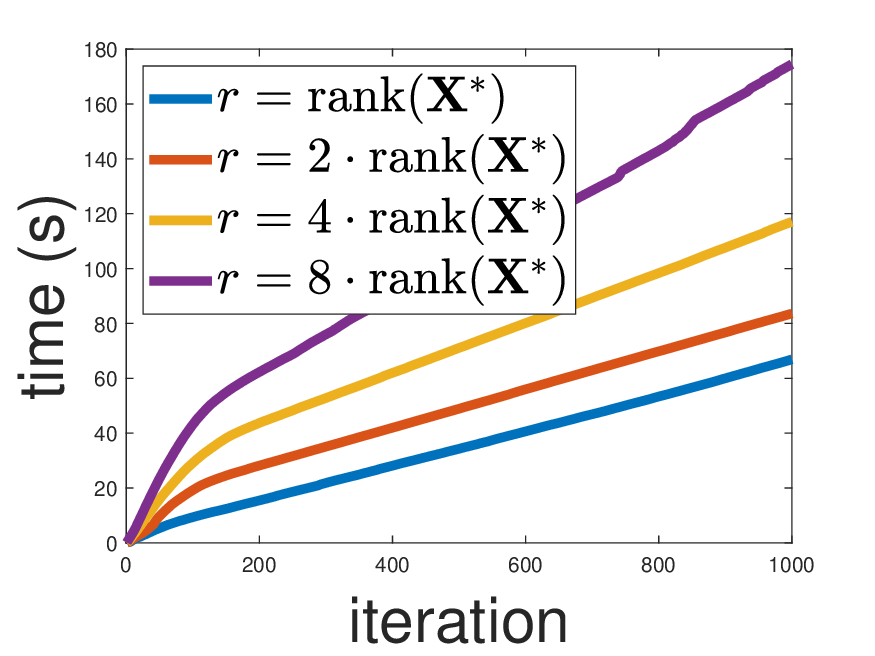}}
\subfigure[G6]{\includegraphics[width=0.24\textwidth]{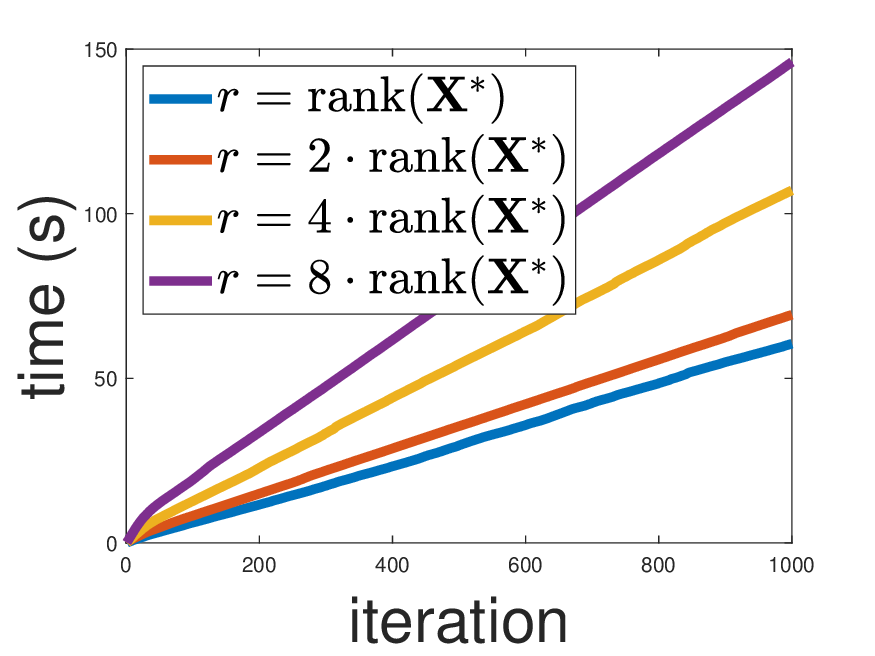}}
\subfigure[G7]{\includegraphics[width=0.24\textwidth]{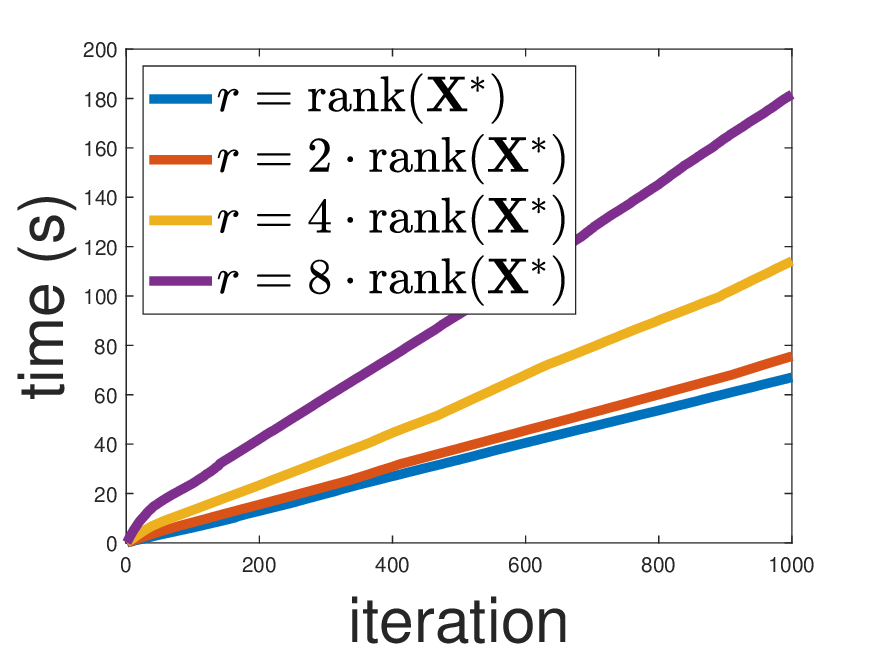}}
\subfigure[G8]{\includegraphics[width=0.24\textwidth]{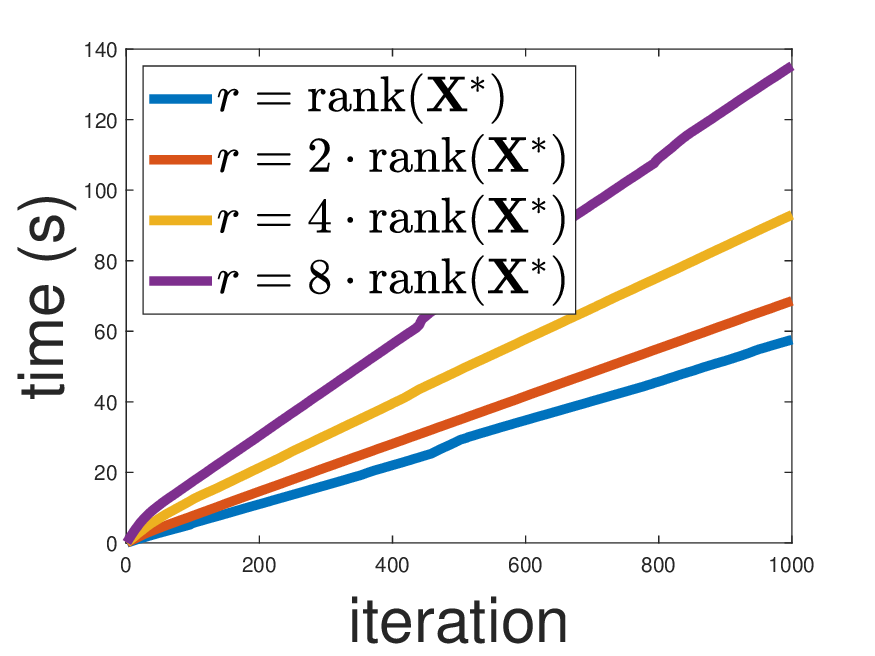}}
\medskip
\subfigure[G9]{\includegraphics[width=0.24\textwidth]{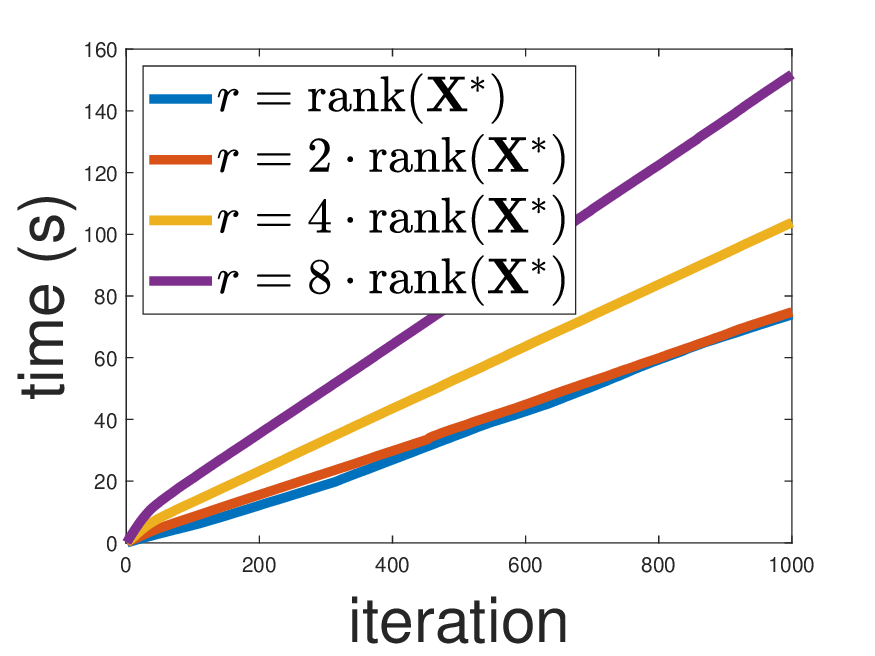}}
\subfigure[G10]{\includegraphics[width=0.24\textwidth]{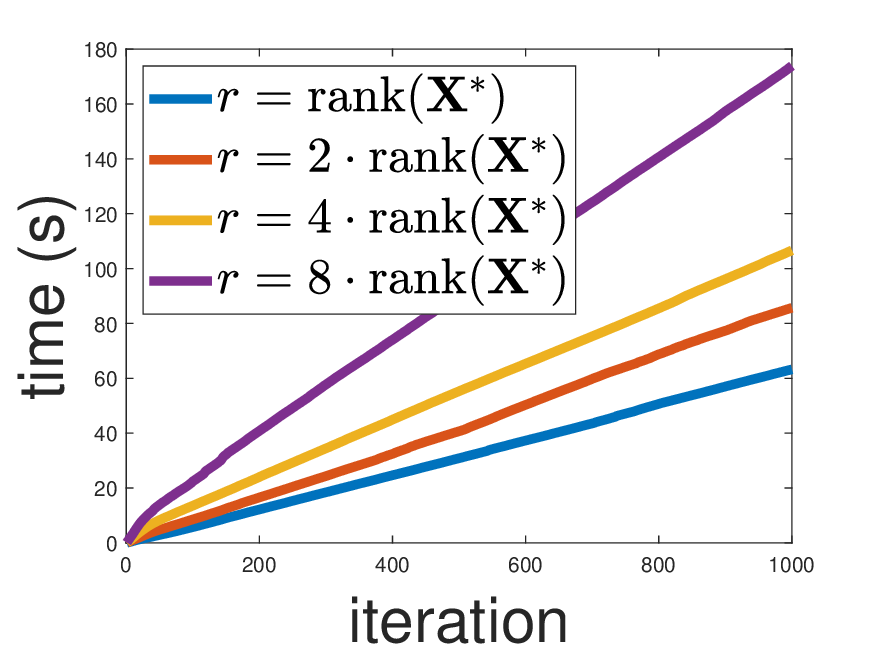}}
\subfigure[G11]{\includegraphics[width=0.24\textwidth]{G11_time}}
\subfigure[G12]{\includegraphics[width=0.24\textwidth]{G12_time}}
\medskip
\subfigure[G13]{\includegraphics[width=0.24\textwidth]{G13_time}}
\subfigure[G14]{\includegraphics[width=0.24\textwidth]{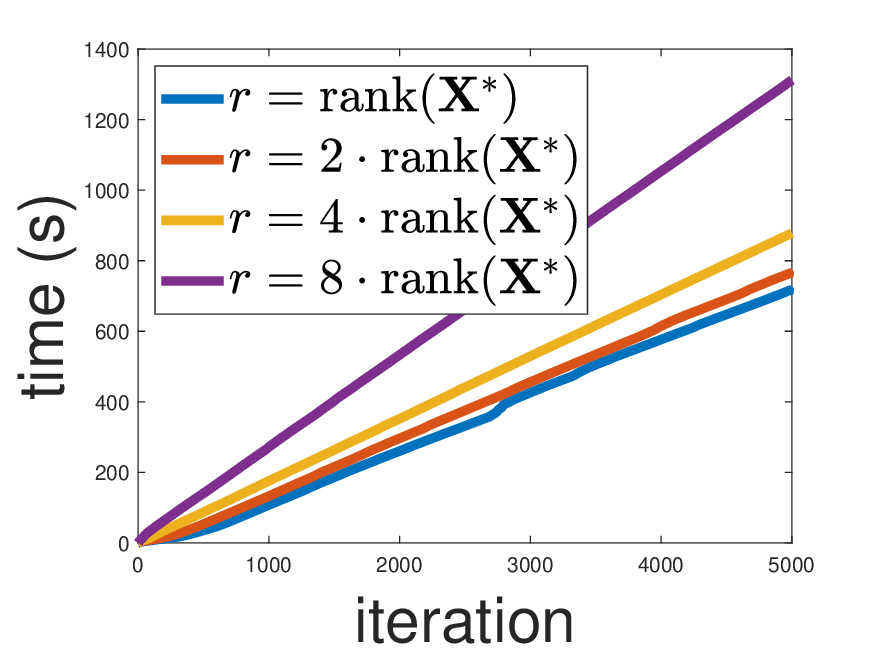}}
\subfigure[G15]{\includegraphics[width=0.24\textwidth]{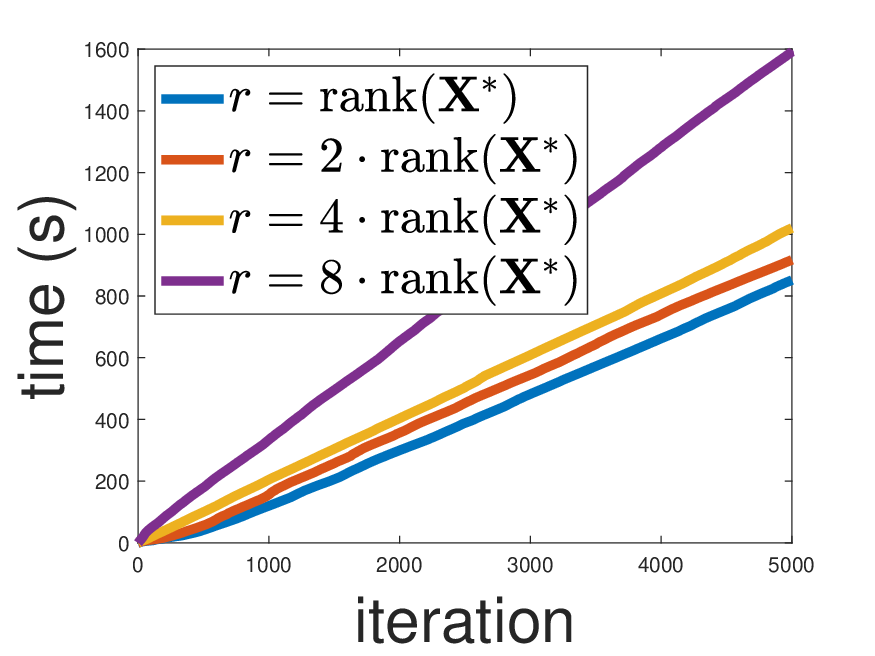}}
\subfigure[G16]{\includegraphics[width=0.24\textwidth]{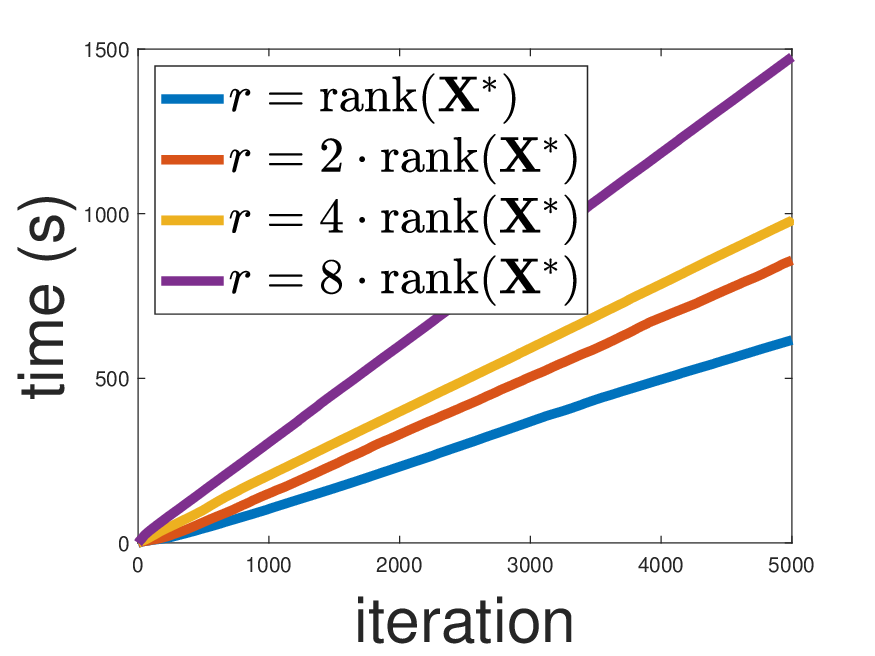}}
\medskip
\subfigure[G17]{\includegraphics[width=0.24\textwidth]{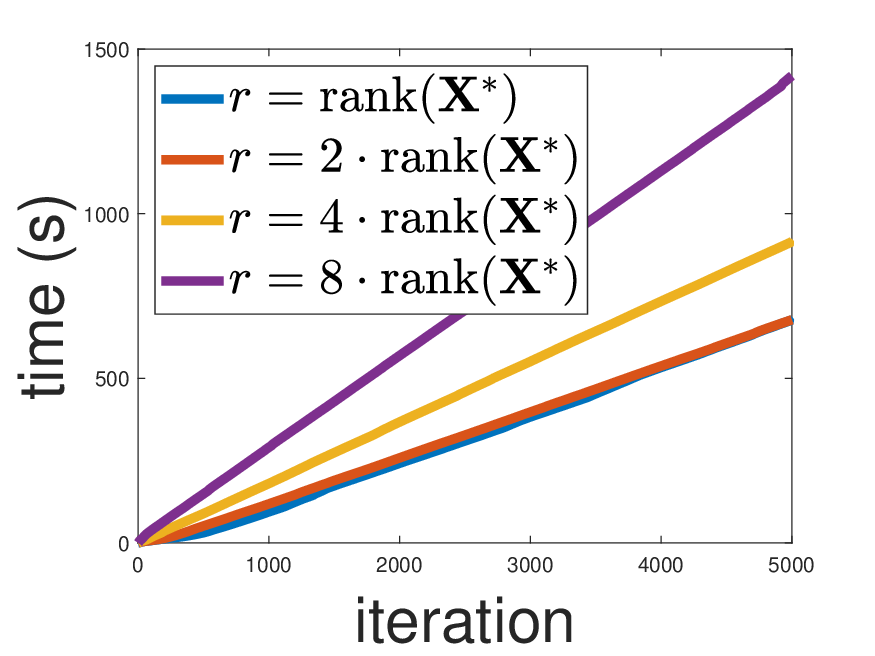}}
\subfigure[G18]{\includegraphics[width=0.24\textwidth]{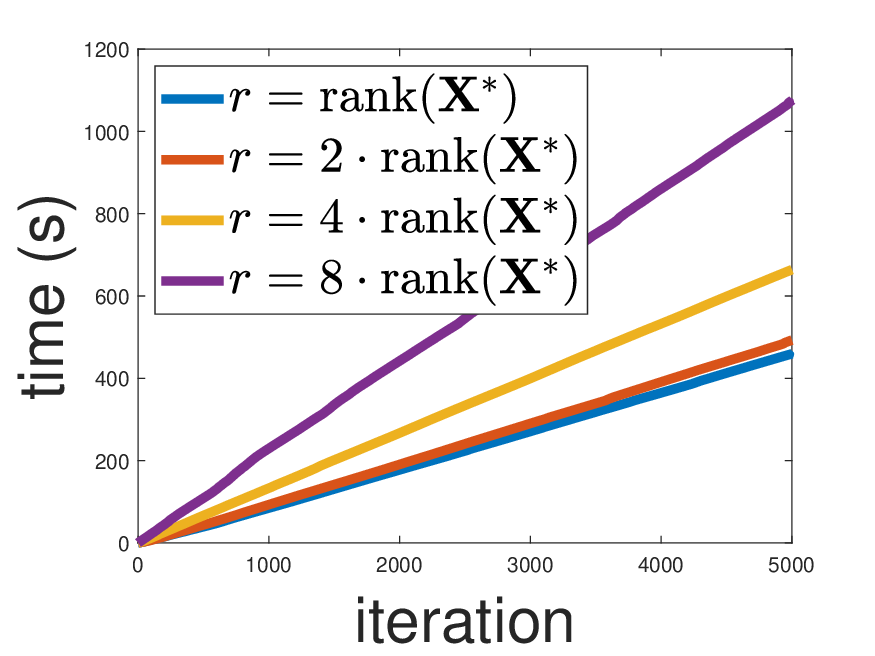}}
\subfigure[G19]{\includegraphics[width=0.24\textwidth]{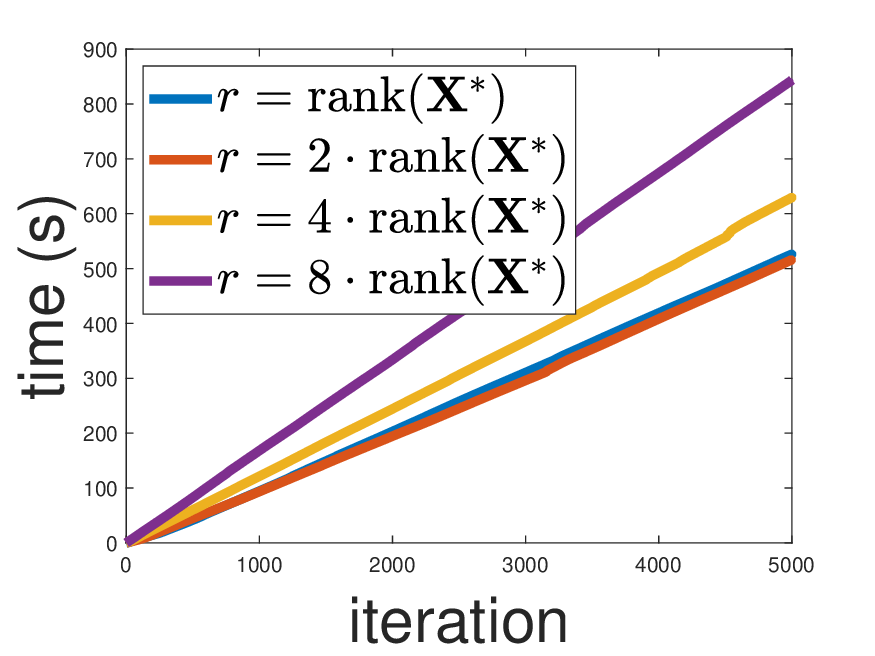}}
\subfigure[G20]{\includegraphics[width=0.24\textwidth]{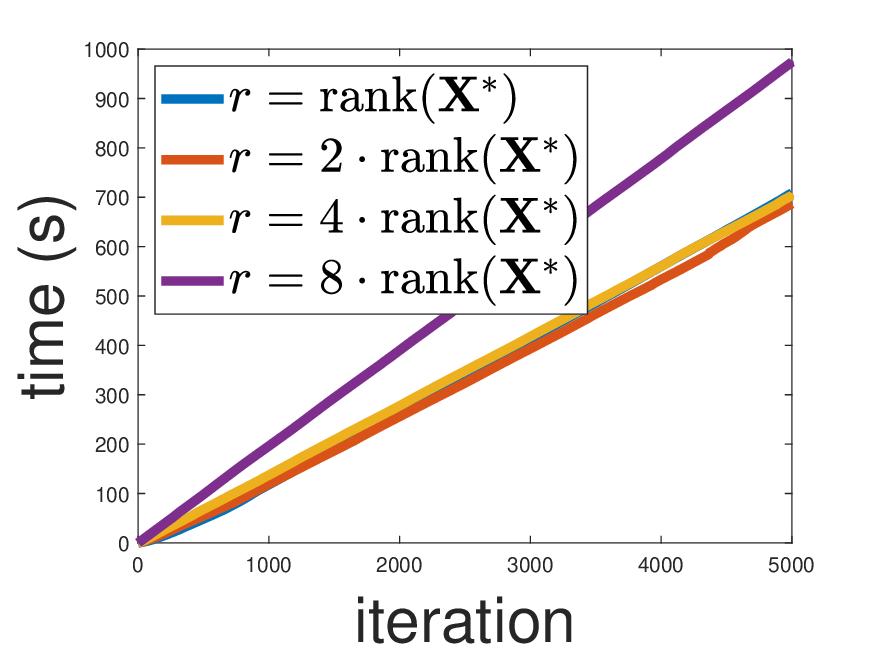}}
\caption{Time as a function of number of iterations it takes to run Algorithm \ref{alg:EG} with varying SVD ranks for the Max-Cut problem.}
\label{fig:time_tableTC_full}
\end{center}
\vskip -0.2in
\end{figure*}

\end{document}